\newtheorem{thm}{Theorem}
\newtheorem{cor}[thm]{Corollary}
\newtheorem{lem}[thm]{Lemma}
\theoremstyle{definition}
\DeclareMathOperator{\Der}{Der}
\begin{document}
\title[Polynomial dynamical systems]
{Polynomial dynamical systems\\
and Korteweg--de Vries equation}
\author{V. M. Buchstaber}
\address{Steklov Mathematical Institute of Russian Academy of Sciences, ul. Gubkina 8, Moscow, 119991, Russia}
\email{buchstab@mi.ras.ru}
\thanks{This work is supported by the Russian Science Foundation under grant 14-50-00005.}

\begin{abstract}

In this work we explicitly construct polynomial vector fields $\mathcal{L}_k,\;k=0,1,2,3,4,6$
on the complex linear space $\mathbb{C}^6$ with coordinates $X=(x_2,x_3,x_4)$ and $Z=(z_4,z_5,z_6)$.
The fields $\mathcal{L}_k$ are linearly independent outside their discriminant variety $\Delta \subset \mathbb{C}^6$
and tangent to this variety.

We describe a polynomial Lie algebra of the fields $\mathcal{L}_k$ and the structure of the polynomial ring $\mathbb{C}[X, Z]$
as a graded module with two generators $x_2$ and $z_4$ over this algebra.
The fields $\mathcal{L}_1$ and $\mathcal{L}_3$ commute.
Any polynomial $P(X,Z) \in \mathbb{C}[X, Z]$ determines a hyperelliptic function $P(X,Z)(u_1, u_3)$ of genus $2$,
where $u_1$ and $u_3$ are coordinates of trajectories of the fields $\mathcal{L}_1$ and $\mathcal{L}_3$.

The function $2 x_2(u_1, u_3)$ is a 2-zone solution of the KdV hierarchy and $\frac{\partial}{\partial u_1}z_4(u_1, u_3)=\frac{\partial}{\partial u_3}x_2(u_1, u_3)$.

\end{abstract}

\maketitle

\section*{Introduction}
Consider the hyperelliptic curve of genus $g$
\begin{equation} \label{1}
V_\lambda=\left\{(x,y)\in\mathbb{C}^2\, :  y^2=x^{2g+1}+\sum_{k=2}^{2g+1} \lambda_{2 k} x^{2g - k + 1} \right\}.
\end{equation}
For brevity, we call Abelian functions on Jacobi varieties of \eqref{1} the hyperelliptic functions of genus~$g$.
In the theory and applications of these functions, that are based on the sigma-function
$\sigma(u;\lambda)$ (see \cite{Baker-1898, BEL-97-1, BEL-97-2, BEL-12}),
where $u=(u_1,u_3,\ldots,u_{2g-1}),\; \lambda=(\lambda_4,\ldots,\lambda_{4g+2})$, the grading plays an important role.
Further we use an indexing of variables, parameters and functions that clearly indicates their grading.
In our notation, for $\omega=(j_1,\ldots,(2g-1)j_g),$ where $j_1\geqslant 0,\ldots,j_g\geqslant 0$ and
$j_1+\cdots+j_g\geqslant 2$, we have
\[
\wp_\omega(u;\lambda)=
-\frac{\partial^{j_1+\cdots+j_g}}{\partial^{j_1}_{ u_1}\cdots\partial^{j_g}_{u_g}}\,\ln\sigma(u;\lambda),
\]
$\deg \wp_\omega=j_1+\cdots+(2g-1)j_g$.

According to Dubrovin--Novikov theorem \cite{DN-74}, the space of the
universal bundle of Jacobi varieties of hyperelliptic curves \eqref{1}
is birationally equivalent to the complex linear space $\mathbb{C}^{3g}$.
A proof of this result based on the theory of hyperelliptic sigma-functions
$\sigma(u;\lambda)$ was obtained in \cite{BEL-97-2, BEL-12} and is essentially used in this work.

In \cite{BEL-97-1} (see also \cite{BEL-97-2, BEL-12})
it is shown that for genus $g > 1$ the hyperelliptic analogue $\wp_{2,0,\ldots,0}(u;\lambda)$
of Weierstrass elliptic function
$\wp(u;g_2,g_3)=\wp_{2}(u;-4\lambda_4,-4\lambda_6)$ gives a family of $g$-zone solutions of
Korteweg--de Vries equation (KdV) introduced in \cite{Nov-74}.

In \cite{BL-08} the classical problem of differentiation of Abelian functions over parameters
for families of $(n,s)$-curves was solved.
In the case of elliptic curves (with $(n,s)=(2,3)$) this problem was solved by Frobenius and Stickelberger in \cite{FS-1882}.

In \cite{BL-08} in the case of hyperelliptic curves $((n,s)=(2, 2g+1))$
a method of construction of $3g$ polynomial dynamical systems on $\mathbb{C}^{3g}$ was developed.
These systems are determined by polynomial vector fields that are
linearly independent outside their discriminant variety and tangent to this variety.

In the differential field of hyperelliptic functions of genus $g$ there are the generators
$\wp_{\omega_k}(u; \lambda),\; k=1,\ldots,g$, where
$\omega_1=(2,0,\ldots,0),\omega_2=(1,3,0,\ldots,0),\ldots,\omega_g=(1,0,\ldots,0,2g-1)$.
The function $2\wp_{\omega_1}(u;\lambda)$ is a $g$-zone solution of the KdV hierarchy and
$\frac{\partial}{\partial u_1}\wp_{\omega_k}
=\frac{\partial}{\partial u_{2k-1}}\wp_{\omega_1}, \; k=2,\ldots,g$
(see \cite{BEL-97-1, BEL-97-2, BEL-12}).

{\bf Problem.}
Construct polynomial vector fields $\mathcal{L}_{2k-2},\; k=1,\ldots,2g$,
and $\mathcal{L}_{2q-1},\; q=1,\ldots,g$, on $\mathbb{C}^{3g}$
with coordinates $X_1=(x_{2,0},x_{3,0},x_{4,0} ),\ldots,X_g=(x_{1,2g-1},x_{2,2g-1},x_{3,2g-1})$
and the corresponding polynomial Lie algebra $\mathcal{A}_g$, such that:

1) The ring of polynomials $\mathbb{C}[X_1,\ldots,X_g]$ is a module with $g$ generators
$x_{2,0},x_{1,3},\ldots,x_{1,2g-1}$ over the Lie algebra $\mathcal{A}_g$.

2) The fields $\mathcal{L}_1$, $\mathcal{L}_3$, $\ldots$, $\mathcal{L}_{2 g - 1}$ commute.

3) Any polynomial $P(X_1,\ldots,X_g) \in \mathbb{C}[X_1,\ldots,X_g]$ determines
a hyperelliptic function $P(X_1,\ldots,X_g)(u)$ of genus $g$, where $u = (u_1, u_3,\ldots,u_{2g-1})$ 
are coordinates of trajectories of the fields
$\mathcal{L}_1, \mathcal{L}_3,\ldots, \mathcal{L}_{2g-1}$.

4) The generators of the $\mathcal{A}_g$-module determine the generators
$x_{2,0}(u) = \wp_{\omega_1}(u),\; x_{1, 2k-1}(u) = \wp_{\omega_k}(u)$,\, $k = 2, \ldots, g$,
of the differential field of hyperelliptic functions of genus~$g$.

In this paper, this problem is solved in the case $g=2$.

In \S1 and \S2 for the classical example of the case $g=1$ our approach and methods are described in detail.
Homogeneous polynomial dynamical systems on $\mathbb{C}^3$ are described
in terms of universal bundles with non-singular elliptic curves in standard Weierstrass model as fiber
and in terms of universal bundles with factor of non-degenerate elliptic curves by canonical involution as fiber.
The main results are given in \S3 and \S4.
Homogeneous polynomial dynamical systems on $\mathbb{C}^6$ are described in terms 
of universal bundles with Jacobi varieties of non-singular hyperelliptic curves of genus $2$ as fiber and
Kummer varieties of this curves as fiber.

In Appendices A and B we collected the necessary information about elliptic sigma-functions and
sigma-functions of genus $2$ curves.

Note that in general, the present work belongs to a large field of research
at the junction of the analytic theory of Riemann surfaces, Abelian functions, and the theory of integrable systems.
One of the main objects of this field are infinite-dimensional Lie algebras,
which are finite-dimensional modules over the corresponding rings of functions.
As recent publications in this area, see~\cite{Sh1, Sh2, Sh3}.

The author is grateful to E.~Yu.~Bunkova, O.~I.~Mokhov and V.~Z.~Enolskii for a fruitful discussion of the results,
and to E.~Yu.~Bunkova for great help in preparing this work for publication.

\section{Polynomial dynamical systems on $\mathbb{C}^3$}
Consider the bundle
\[
\pi \colon U_1 \longrightarrow B_1
\]
of Jacobi varieties on non-singular elliptic curves
\begin{equation} \label{2}
V_g=\{(x,y)\in\mathbb{C}^2\, :  y^2=4x^3-g_2x-g_3\}
\end{equation}
with base $B_1=\{(g_2,g_3)\in \mathbb{C}^2\, : \Delta(g_2,g_3)\neq 0$\}, where $\Delta(g_2,g_3)=g_2^3-27g_3^2$,
and fiber $J_g=\mathbb{C}^1/\Gamma_g$,
where $\Gamma_g$ is a lattice of rank $2$, generated by the periods $(2\omega,2\omega')$
of a holomorphic differential $\frac{dx}{y}$ on cycles of the curve $V_g$.
Denote by $\mathcal{F}$ the field of $C^\infty$-functions on $U_1$, such that their restrictions to the layers $J_g$
are elliptic functions.
For any point $g=(g_2,g_3)\in B_1$ on the universal covering $\mathbb{C}^1 \to J_g$
a Weierstrass sigma-function $\sigma(u;g_2,g_3)$ is defined
(see Appendix A). It determines Weierstrass functions \cite{Weierstrass-1894, UW-63}
\[
\zeta(u;g_2,g_3)=\frac{\partial \ln \sigma(u;g_2,g_3)}{\partial u} \; \text{ and }\; \wp(u;g_2,g_3)=-\frac{\partial \zeta(u;g_2,g_3)}{\partial u}\,.
\]
Any elliptic function on the Jacobi variety $J_g$ is a rational function in $\wp(u;g_2,g_3)$ and
$\frac{\partial}{\partial u}\wp(u;g_2,g_3)$.
Denote by $\Der(\mathcal{F})$ the 
$\mathcal{F}$-module of derivations of the field $\mathcal{F}$.
According to Frobenius--Stickelberger theorem (see \cite{FS-1882}) the $\mathcal{F}$-module $\Der(\mathcal{F})$
is generated by the operators
\[
 L_0=-u\partial_u+4g_2\partial_{g_2}+6g_3\partial_{g_3},\quad
 L_2=-\zeta(u;g_2,g_3)\partial_u+6g_3\partial_{g_2}+\frac{1}{3}g_2^2\partial_{g_3}\quad \text{and }\; L_1=\partial_u.
\]
In \cite{BL-08} a proof of this result is given. It uses the operators $Q_0$ and $Q_2$ (see Appendix A)
annihilating sigma-functions $\sigma(u;g_2,g_3)$.

The operators $Q_0$ and $Q_2$ were discovered by Weierstrass after the work \cite{FS-1882}.
The application of operators $Q_0$ and $Q_2$ made it possible to considerably simplify
the process of obtaining the operators $L_0$, $L_2$ and $L_1$ (see \cite{BL-08}).

Consider the complex linear space $\mathbb{C}^3$ with graded coordinates $x_2,x_3,x_4,\; \deg x_k=k$,
and the space $\mathbb{C}^2$ with graded coordinates $g_2,g_3,\; \deg g_k=2k$.
We introduce the homogeneous polynomial map
\[
\pi_1 \colon \mathbb{C}^3 \to \mathbb{C}^2\; : \; \pi_1 (x_2,x_3,x_4)=(g_2,g_3),
\]
where $\pi_1^*(g_2)=12x_2^2-2x_4,\; \pi_1^*(g_3)=-8x_2^3+2x_4x_2-x_3^2$. The Jacobi matrix of $\pi_1$ has the form
\[
\begin{pmatrix}
24x_2 & 0 & -2\\
-24x_2^2+2x_4 & -2x_3 & 2x_2
\end{pmatrix}.
\]
Hence the point $X=(x_2,x_3,x_4)$ is singular if and only if
\[
x_2x_3=0,\quad x_3=0, \quad x_4=0.
\]
We have
\[
\pi_1(x_2,0,0)=(12x_2^2,-8x_2^3)\in\Delta.
\]

Set
\[
\widetilde{\Delta}=\{ X=(x_2,x_3,x_4)\in \mathbb{C}^3 \;|\; \pi_1(X)\in\Delta \}\; \text{ and }\; \widetilde{W}_1=\mathbb{C}^3\setminus \widetilde{\Delta}.
 \]
This defines the regular bundle
\[
\pi_1 \colon \widetilde{W}_1 \to B_1.
\]
It's fiber over the point $g=(g_2,g_3)$ is the non-singular curve
\[
x_3^2=4x_2^3-\pi_1^*(g_2)x_2-\pi_1^*(g_3).
\]

In the space $\widetilde{W}_1 \subset \mathbb{C}^3$ the involution $(x_2,x_3,x_4) \to (x_2,-x_3,x_4)$ acts freely,
so the regular bundle $\pi_1$ decomposes into a regular double covering $\widetilde{W}_1 \to \mathcal{K}_1$
and a regular bundle $\mathcal{K}_1 \to B_1$
with fiber corresponding to factor of an elliptic curve by can canonical involution $\tau_1$.

Set
\[
\widetilde{U}_1=\{ (u;g_2,g_3)\in U_1 \;:\;u\neq 0 \}\; \text{ and }\; \widehat{U}_1 = \widetilde{U}_1/(u \sim -u).
\]
By the Weierstrass theorem on elliptic curve uniformization (see Appendix A) the map
\[
\varphi_g \colon \mathbb{C}\setminus 0 \to V_g \;:\; \varphi_g(u)=(\wp(u),\wp'(u))
\]
is a homeomorphism for any point $g\in B_1$.

The map $\varphi_g$ is equivariant under the involution $u \to -u$ on $\mathbb{C}$ and $\tau_1$ on $V_g$.
The map $u \to \wp(u)$ determines a homeomorphism of quotient-spaces.

\begin{cor}[Uniformization]

The map
\[
\varphi \colon \widetilde{U}_1 \to \widetilde{W}_1 \;:\; \varphi(u;g_2,g_3)=(x_2,x_3,x_4),
\]
where $x_2=\wp(u;g_2,g_3),\; x_3=\wp'(u;g_2,g_3),\; x_4=\wp''(u;g_2,g_3)$ and $f'=\frac{\partial}{\partial u}f$,
determines a fiberwise equivariant with respect to involutions diffeomorphism of bundles
\[ \begin{CD}
\widetilde{U}_1 @ >>> \widetilde{W}_1\\
@ VVV @ VVV\\
B_1@ = B_1
\end{CD}\; . \]
\end{cor}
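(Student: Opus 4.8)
The plan is to derive the corollary from the fibrewise Weierstrass uniformization theorem stated just above, upgrading it to an isomorphism of bundles by checking three things: that $\varphi$ covers the identity on $B_1$, that it intertwines the two involutions, and that it is a diffeomorphism. First I would verify that $\varphi$ indeed lands in $\widetilde{W}_1$ and is fibrewise, i.e.\ $\pi_1\circ\varphi=\pi$. This is a direct consequence of the differential equations satisfied by $\wp$: differentiating $(\wp')^2=4\wp^3-g_2\wp-g_3$ and dividing by $2\wp'$ gives $\wp''=6\wp^2-\tfrac12 g_2$, whence
\[
12\wp^2-2\wp''=g_2,\qquad -8\wp^3+2\wp''\wp-(\wp')^2=g_3.
\]
Comparing with the formulas $\pi_1^*(g_2)=12x_2^2-2x_4$ and $\pi_1^*(g_3)=-8x_2^3+2x_4x_2-x_3^2$, and substituting $x_2=\wp$, $x_3=\wp'$, $x_4=\wp''$, shows $\pi_1(\varphi(u;g_2,g_3))=(g_2,g_3)$. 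Since $(g_2,g_3)\in B_1$ means $\Delta(g_2,g_3)\neq 0$, the image point lies in $\widetilde{W}_1$ by the definition of $\widetilde{\Delta}$, and the same identity shows $\varphi$ covers the identity of $B_1$.

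Next I would record the equivariance. Because $\wp$ is even and $\wp'$ is odd in $u$, the coordinates transform under $u\mapsto -u$ as $x_2\mapsto x_2$, $x_3\mapsto -x_3$, $x_4\mapsto x_4$, which is exactly the involution $(x_2,x_3,x_4)\mapsto(x_2,-x_3,x_4)$ on $\widetilde{W}_1$. Thus $\varphi$ is equivariant with respect to the involution $u\mapsto -u$ on $\widetilde{U}_1$ and $\tau_1$ on $\widetilde{W}_1$, and consequently descends to the quotients $\widehat{U}_1$ and $\mathcal{K}_1$.

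It remains to see that $\varphi$ is a diffeomorphism. On the fibre over a fixed $(g_2,g_3)\in B_1$ the map $\varphi$ is $u\mapsto(\wp(u),\wp'(u),\wp''(u))$; since the third coordinate is the determined function $\wp''=6\wp^2-\tfrac12 g_2$ of the first, this is the Weierstrass uniformization map $\varphi_g(u)=(\wp(u),\wp'(u))$ followed by the graph of a polynomial, hence a diffeomorphism onto the fibre $x_3^2=4x_2^3-g_2x_2-g_3$ of $\widetilde{W}_1$ by the cited theorem. As $\wp$, $\wp'$, $\wp''$ depend real-analytically on all of $(u;g_2,g_3)$ over $\widetilde{U}_1$, the map $\varphi$ is smooth; its differential is invertible, since it is an immersion along each fibre and covers the identity on the base, so $\varphi$ is a local diffeomorphism, and being a fibrewise bijection over the identity of $B_1$ it is a global one. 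The hard part is precisely this last gluing step --- passing from the fibrewise homeomorphism supplied by Weierstrass to a smooth bundle isomorphism with smooth inverse --- which I would settle by the inverse function theorem, using that $\pi_1$ is a submersion on $\widetilde{W}_1$ and that $\varphi_g$ is a biholomorphism on each fibre.
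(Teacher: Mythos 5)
Your proposal is correct and follows essentially the same route the paper intends: the corollary is presented there as an immediate consequence of the fibrewise Weierstrass uniformization $\varphi_g(u)=(\wp(u),\wp'(u))$, its equivariance under $u\mapsto -u$ and $\tau_1$, and the identities $g_2=12\wp^2-2\wp''$, $g_3=-8\wp^3+2\wp''\wp-(\wp')^2$ built into the definition of $\pi_1^*$. Your explicit verification of $\pi_1\circ\varphi=\pi$ and the inverse-function-theorem upgrade from fibrewise homeomorphism to bundle diffeomorphism merely spell out what the paper leaves implicit.
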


We introduce the following derivations of the polynomial ring on the space $\mathbb{C}^3$ with coordinates $x_2,x_3,x_4$:
\[
\mathcal{L}_0=2x_2\partial_{x_2}+3x_3\partial_{x_3}+4x_4\partial_{x_4}, \quad \mathcal{L}_2=\frac{2}{3}(x_4-3x_2^2)\partial_{x_2}+3x_2x_3\partial_{x_3} +(3x_3^2+2x_2x_4)\partial_{x_4},
\]
\[
 \mathcal{L}_1=x_3\partial_{x_2}+x_4\partial_{x_3}+12x_2x_3\partial_{x_4}.
\]

The polynomial fields $\mathcal{L}_0, \mathcal{L}_2$ and $\mathcal{L}_1$ can be written down using matrix
\[
 \mathcal{T}_1 = \begin{pmatrix}
                  2 x_2 & 3 x_3 & 4 x_4\\
                  x_3 & x_4 & 12 x_2 x_3\\
                  {2\over3} x_4 - 2 x_2^2 & 3 x_2 x_3 & 3x_3^2+2x_2x_4
                 \end{pmatrix}\,.
\]
We have
\[
3 \det \mathcal{T}_1 = - 432 x_2^3 x_3^2 + 36 x_2^2 x_4^2 + 108 x_2 x_3^2 x_4 - 27 x_3^4 - 8 x_4^3 = \widehat{g}_2^3 - 27 \widehat{g}_3^2.
\]
Here $\widehat{g}_2 = \pi_1^*(g_2)$, $\widehat{g}_3 = \pi_1^*(g_3)$,
\[
 \mathcal{L}_0 \det \mathcal{T}_1 = 12 \det \mathcal{T}_1, \quad \mathcal{L}_2 \det \mathcal{T}_1 = 0, \quad \mathcal{L}_1 \det \mathcal{T}_1 = 0.
\]

Thus, the fields $\mathcal{L}_0,\; \mathcal{L}_2$ and $\mathcal{L}_1$ are tangent to the manifold $\{ \det \mathcal{T}_1 = 0 \}$.

\begin{thm}[see \cite{BL-08}]
The uniformizing diffeomorphism $\varphi$ 
transforms the vector fields $L_k$ into $\mathcal{L}_k,\;k=0,1,2$, i.e.
\[
L_k\varphi^*P=\varphi^*\mathcal{L}_kP
\]
for any polynomial $P=P(x_2,x_3,x_4)$.
\end{thm}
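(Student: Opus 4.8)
The plan is to use that $L_k$ and $\mathcal{L}_k$ are both derivations while $\varphi^*$ is a ring homomorphism with $\varphi^*x_2=\wp$, $\varphi^*x_3=\wp'$, $\varphi^*x_4=\wp''$, so that $\varphi^*P=P(\wp,\wp',\wp'')$ for any $P=P(x_2,x_3,x_4)$ (throughout $f'=\partial_u f$). First I would reduce to generators: the asserted identity $L_k\varphi^*P=\varphi^*\mathcal{L}_kP$ is linear in $P$, and since $\varphi^*$ is multiplicative while both $L_k$ and $\mathcal{L}_k$ obey the Leibniz rule, it propagates from $P,Q$ to $PQ$. Hence it is enough to verify it for $P\in\{x_2,x_3,x_4\}$, the case of constants being trivial. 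Reading off the coefficients of the $\mathcal{L}_k$ gives the targets $\varphi^*\mathcal{L}_kx_j$; for instance $\varphi^*\mathcal{L}_2x_2=\tfrac23(\wp''-3\wp^2)$ and $\varphi^*\mathcal{L}_1x_4=12\wp\wp'$.

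For $k=1$ the operator $L_1=\partial_u$ gives $L_1\wp=\wp'$ and $L_1\wp'=\wp''$ at once, while $L_1\wp''=\wp'''=12\wp\wp'$ follows by differentiating the relation $\wp''=6\wp^2-\tfrac12 g_2$, itself a consequence of $(\wp')^2=4\wp^3-g_2\wp-g_3$. For $k=0$ the operator $L_0$ is the Euler operator for the grading $\deg u=-1$, $\deg g_2=4$, $\deg g_3=6$; applied to the homogeneous functions $\wp,\wp',\wp''$ of weights $2,3,4$ it reproduces $\mathcal{L}_0x_j$. Concretely, $L_0\wp=2\wp$ is the scaling relation $\wp(\lambda u;\lambda^{-4}g_2,\lambda^{-6}g_3)=\lambda^{-2}\wp(u;g_2,g_3)$ differentiated at $\lambda=1$, and the cases of $\wp',\wp''$ follow from the commutator $[L_0,\partial_u]=\partial_u$.

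The case $k=2$ is the heart of the proof. Using $\zeta'=-\wp$ one finds $[L_2,\partial_u]=-\wp\,\partial_u$, so that $L_2\wp'=\partial_u(L_2\wp)-\wp\wp'$ and $L_2\wp''=\partial_u(L_2\wp')-\wp\wp''$; thus all three generator identities reduce to the single relation $L_2\wp=\tfrac23(\wp''-3\wp^2)$. To establish it, write $L_2=-\zeta\partial_u+D_2$ with $D_2=6g_3\partial_{g_2}+\tfrac13 g_2^2\partial_{g_3}$, and compute $D_2\wp=-\partial_u^2(D_2\ln\sigma)$. The operator $Q_2$ of Appendix A annihilates $\sigma$ and may be taken in the form $Q_2=D_2-\tfrac12\partial_u^2-\tfrac1{24}g_2u^2$; together with $\sigma''/\sigma=\zeta^2-\wp$ this gives $D_2\ln\sigma=\tfrac12(\zeta^2-\wp)+\tfrac1{24}g_2u^2$. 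Differentiating twice in $u$, and again using $\zeta'=-\wp$ and $\wp''=6\wp^2-\tfrac12 g_2$, yields $D_2\wp=-\wp^2+\zeta\wp'+\tfrac12\wp''-\tfrac1{12}g_2$. The summand $\zeta\wp'$ cancels against $-\zeta\partial_u\wp=-\zeta\wp'$, leaving $L_2\wp=2\wp^2-\tfrac13 g_2=\tfrac23(\wp''-3\wp^2)$, as required.

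I expect the case $k=2$ to be the main obstacle. The difficulty is that the parameter derivatives $\partial_{g_2}\ln\sigma$ and $\partial_{g_3}\ln\sigma$ necessarily involve the quasi-periodic, non-elliptic function $\zeta$, so $D_2\wp$ by itself is not an elliptic function; the $-\zeta\partial_u$ term of $L_2$ is designed precisely to cancel this $\zeta$-contribution. The operator $Q_2$ is the tool that makes the cancellation transparent and reduces the whole verification to the two elementary relations $\wp''=6\wp^2-\tfrac12 g_2$ and $\zeta'=-\wp$.
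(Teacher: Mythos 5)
Your proof is correct, and it follows essentially the method the paper attributes to \cite{BL-08} and carries out in detail for the genus-$2$ analogue (Theorem \ref{t-25} in Appendix B): reduce to the generators $x_2,x_3,x_4$, use the annihilating operator $Q_2$ to compute $\ell_2\ln\sigma=\tfrac12(\zeta^2-\wp)+\tfrac1{24}g_2u^2$, differentiate in $u$, and observe that the $-\zeta\partial_u$ term of $L_2$ cancels the non-elliptic $\zeta\wp'$ contribution. All the individual identities (including $[L_2,\partial_u]=-\wp\,\partial_u$ and $L_2\wp=2\wp^2-\tfrac13g_2=\tfrac23(\wp''-3\wp^2)$) check out.
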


Denote by $\tau_k$ the coordinate along the trajectory of the field $\mathcal{L}_k$, i.e.
\[
\frac{\partial}{\partial\tau_k}P=L_kP,\; k=0,1,2.
\]
\begin{cor}
The fields $\mathcal{L}_k$ determine on $\mathbb{C}^3$ the graded homogeneous polynomial dynamical systems
\begin{align*}
S_0 &:\; \frac{\partial}{\partial\tau_0}x_2=2x_2;\; \frac{\partial}{\partial\tau_0}x_3=3x_3;\; \frac{\partial}{\partial\tau_0}x_4=4x_4,\\
S_1 &:\; \frac{\partial}{\partial\tau_1}x_2=x_3;\; \frac{\partial}{\partial\tau_1}x_3=x_4;\; \frac{\partial}{\partial\tau_1}x_4=12x_2x_3, \\
S_2 &:\; \frac{\partial}{\partial\tau_2}x_2=\frac{2}{3}x_4-2x_2^2;\; \frac{\partial}{\partial\tau_2}x_3=3x_2x_3;\;
\frac{\partial}{\partial\tau_2}x_4=3x_3^2+2x_2 x_4.
\end{align*}
\end{cor}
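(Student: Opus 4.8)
The plan is to read off each dynamical system directly from the coordinate form of the corresponding vector field. By definition, the parameter $\tau_k$ measures displacement along an integral curve of $\mathcal{L}_k$, so for any polynomial $P$ one has $\frac{\partial}{\partial\tau_k}P = \mathcal{L}_k P$; the preceding theorem guarantees that this coincides, under the uniformizing diffeomorphism $\varphi$, with the action of $L_k$, which is precisely the content of the defining relation $\frac{\partial}{\partial\tau_k}P = L_k P$. Taking $P$ to be each of the three coordinate functions $x_2$, $x_3$, $x_4$ in turn then produces the three component equations of each system.

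First I would treat $S_0$. Applying $\mathcal{L}_0 = 2x_2\partial_{x_2} + 3x_3\partial_{x_3} + 4x_4\partial_{x_4}$ to the coordinate functions and using $\partial_{x_i}x_j = \delta_{ij}$ gives $\mathcal{L}_0 x_2 = 2x_2$, $\mathcal{L}_0 x_3 = 3x_3$, $\mathcal{L}_0 x_4 = 4x_4$. These are exactly the right-hand sides of $S_0$; they also exhibit the grading, the weight of each coordinate appearing as its linear coefficient, which confirms that $\mathcal{L}_0$ is the Euler field.

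Next I would do the same for $\mathcal{L}_1$ and $\mathcal{L}_2$. From $\mathcal{L}_1 = x_3\partial_{x_2} + x_4\partial_{x_3} + 12x_2x_3\partial_{x_4}$ one reads $\mathcal{L}_1 x_2 = x_3$, $\mathcal{L}_1 x_3 = x_4$, $\mathcal{L}_1 x_4 = 12x_2x_3$, which is $S_1$; and from $\mathcal{L}_2 = \frac{2}{3}(x_4 - 3x_2^2)\partial_{x_2} + 3x_2x_3\partial_{x_3} + (3x_3^2 + 2x_2x_4)\partial_{x_4}$ one reads $\mathcal{L}_2 x_2 = \frac{2}{3}x_4 - 2x_2^2$, $\mathcal{L}_2 x_3 = 3x_2x_3$, $\mathcal{L}_2 x_4 = 3x_3^2 + 2x_2x_4$, which is $S_2$. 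In every case the right-hand side is homogeneous of degree $\deg x_i + k$, so all three systems are graded homogeneous, as asserted.

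There is essentially no analytic obstacle here: once the identification $\frac{\partial}{\partial\tau_k} = \mathcal{L}_k$ on polynomials is in place, the corollary is merely the coordinate expansion of the three vector fields, and the only care required is bookkeeping of the coefficients together with the grading check $\deg(\mathcal{L}_k x_i) = \deg x_i + k$. The single conceptual step worth stating explicitly is that the correspondence between the abstract derivations $L_k$ and the polynomial fields $\mathcal{L}_k$, supplied by the theorem, is exactly what licenses passing from $\frac{\partial}{\partial\tau_k}P = L_kP$ to the explicit polynomial evolution equations on $\mathbb{C}^3$.
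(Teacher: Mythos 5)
Your proposal is correct and follows exactly the route the paper intends: the corollary is an immediate coordinate read-off of the explicit expressions for $\mathcal{L}_0$, $\mathcal{L}_1$, $\mathcal{L}_2$, combined with the identification $\frac{\partial}{\partial\tau_k}=\mathcal{L}_k$ on polynomials supplied by the preceding theorem. The paper offers no separate argument, and your grading check $\deg(\mathcal{L}_k x_i)=\deg x_i + k$ is a sensible explicit confirmation of the homogeneity claim.
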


\begin{cor} \label{S-hat}
The fields $\mathcal{L}_0$ and $\mathcal{L}_2$ determine on $\mathbb{C}^3$ with coordinates $(x_2,x_4,x_6)$
the graded homogeneous polynomial dynamical systems
\begin{align*}
\widehat S_0 &:\; \frac{\partial}{\partial\widehat\tau_0}x_2=2x_2;\; \frac{\partial}{\partial\widehat\tau_0}x_4=4x_4 ;\;   \frac{\partial}{\partial\widehat\tau_0}x_6=6x_6\,,\\
\widehat S_2 &:\; \frac{\partial}{\partial\widehat\tau_2}x_2=\frac{2}{3}x_4-2x_2^2;\; \frac{\partial}{\partial\widehat\tau_2}x_4=3x_6+2x_2 x_4 ;\;
\frac{\partial}{\partial\widehat\tau_2}x_6=6x_2x_6\,.
\end{align*}
\end{cor}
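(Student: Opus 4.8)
The plan is to realize $\widehat S_0$ and $\widehat S_2$ as the restrictions of $\mathcal{L}_0$ and $\mathcal{L}_2$ to the ring of polynomials invariant under the involution $\iota\colon(x_2,x_3,x_4)\mapsto(x_2,-x_3,x_4)$ introduced above, which on $\widetilde W_1$ yields the double covering onto the Kummer-type variety $\mathcal{K}_1$. This invariant ring consists of the polynomials that are even in $x_3$, so it equals $\mathbb{C}[x_2,x_4,x_3^2]$. Since $\deg x_3^2=6$, setting $x_6=x_3^2$ gives a graded isomorphism onto $\mathbb{C}[x_2,x_4,x_6]$, and this is the coordinate substitution that turns $\mathcal{L}_0$ and $\mathcal{L}_2$ into the asserted systems.

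First I would check that $\mathcal{L}_0$ and $\mathcal{L}_2$ preserve the even subring, for which it suffices to test the generators. The images $\mathcal{L}_0x_2=2x_2$, $\mathcal{L}_2x_2=\tfrac{2}{3}(x_4-3x_2^2)$, $\mathcal{L}_0x_4=4x_4$ and $\mathcal{L}_2x_4=3x_3^2+2x_2x_4$ are all even, while the odd generator goes to $\mathcal{L}_0x_3=3x_3$ and $\mathcal{L}_2x_3=3x_2x_3$, both odd; hence both fields commute with $\iota$ and descend to graded derivations of $\mathbb{C}[x_2,x_4,x_6]$. By contrast $\mathcal{L}_1x_2=x_3$ is odd, so $\mathcal{L}_1$ does not descend, which is exactly why only $\mathcal{L}_0$ and $\mathcal{L}_2$ enter the statement.

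Next I would compute the induced action on the three generators. For $x_2$ and $x_4$ the right-hand sides are read off from $\mathcal{L}_0$ and $\mathcal{L}_2$ directly, giving the first two equations of each system after the substitution $x_3^2=x_6$. For $x_6$ I would apply the Leibniz rule $\mathcal{L}_kx_3^2=2x_3(\mathcal{L}_kx_3)$, which together with $\mathcal{L}_0x_3=3x_3$ and $\mathcal{L}_2x_3=3x_2x_3$ yields $\mathcal{L}_0x_6=6x_6$ and $\mathcal{L}_2x_6=6x_2x_6$. Assembling these equations reproduces $\widehat S_0$ and $\widehat S_2$ verbatim.

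No genuine obstacle arises: the entire content lies in recognizing that $x_6=x_3^2$ is the correct degree-$6$ invariant coordinate and that $\mathcal{L}_0$ and $\mathcal{L}_2$ are precisely the fields tangent to the $\iota$-quotient, after which each verification is a single Leibniz computation. As a consistency check I would confirm that the weights agree, namely $\deg\widehat\tau_0=0$ and $\deg\widehat\tau_2=-2$, matching $\deg\tau_0$ and $\deg\tau_2$, so that homogeneity is preserved and no term has been dropped.
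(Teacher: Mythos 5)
Your proof is correct and follows essentially the same route as the paper, which justifies the corollary by observing that the double covering $(x_2,x_3,x_4)\mapsto(x_2,x_4,x_3^2)$ transforms $S_0$ and $S_2$ into $\widehat S_0$ and $\widehat S_2$; your Leibniz computations $\mathcal{L}_0x_3^2=6x_3^2$ and $\mathcal{L}_2x_3^2=6x_2x_3^2$ are exactly the needed verification. The added remarks on the $\iota$-invariant ring and on why $\mathcal{L}_1$ fails to descend are consistent with the paper's discussion of the quotient $\mathcal{K}_1$.
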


Set
\[
\mathbb{C}^3_*= \{ (x_2,x_3,x_4)\in \mathbb{C}^3 \;:\; x_3 \neq 0 \}\,, \quad \mathbb{C}^3_{*,*}= \{ (x_2,x_4,x_6)\in \mathbb{C}^3 \;:\; x_6 \neq 0 \}\,.
\]
The regular double covering
\[
\mathbb{C}^3_* \to \mathbb{C}^3_{*,*} \;:\; (x_2,x_3,x_4) \to (x_2,x_4,x_3^2)
\]
transforms the systems $S_0$ and $S_2$ into systems $\widehat S_0$ and $\widehat S_2$.

Using the commutation rules for the operators $L_k,\;k=0,1,2$, (see \cite{BL-08} and Appendix A)
and the notion of polynomial Lie algebra (see \cite{BL-02}), we obtain:
\begin{cor}
There is a graded polynomial Lie algebra over the ring $\mathbb{C}[x_2,x_3,x_4]$ 
with generators $\mathcal{L}_0, \mathcal{L}_2$ and $\mathcal{L}_1$. The action of this operators
on $\mathbb{C}[x_2,x_3,x_4]$ is determined by the dynamical systems described above
and the commutation relations:
\[
[\mathcal{L}_0,\mathcal{L}_k]=k\mathcal{L}_k,\;k=0,1,2,\; \text{ and }\; [\mathcal{L}_1,\mathcal{L}_2]=x_2\mathcal{L}_1.
\]
\end{cor}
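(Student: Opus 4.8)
The plan is to verify three things: the relations $[\mathcal{L}_0,\mathcal{L}_k]=k\mathcal{L}_k$ for $k=0,1,2$, the single nontrivial relation $[\mathcal{L}_1,\mathcal{L}_2]=x_2\mathcal{L}_1$, and that together these equip the $\mathbb{C}[x_2,x_3,x_4]$-module generated by $\mathcal{L}_0,\mathcal{L}_1,\mathcal{L}_2$ with the structure of a graded polynomial Lie algebra in the sense of \cite{BL-02}. The first family is essentially automatic: $\mathcal{L}_0=2x_2\partial_{x_2}+3x_3\partial_{x_3}+4x_4\partial_{x_4}$ is the Euler operator for the grading $\deg x_k=k$, and each $\mathcal{L}_k$ is a homogeneous derivation of degree $k$. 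For any homogeneous derivation $D$ of degree $k$ one has $[\mathcal{L}_0,D]=kD$, so $[\mathcal{L}_0,\mathcal{L}_1]=\mathcal{L}_1$ and $[\mathcal{L}_0,\mathcal{L}_2]=2\mathcal{L}_2$ require no computation.

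For the remaining relation I would transfer from the operators $L_k$, exploiting that the preceding Theorem intertwines them via $L_k\varphi^*=\varphi^*\mathcal{L}_k$ on every polynomial argument. It then follows formally that $\varphi^*$ intertwines commutators, $[L_j,L_k]\varphi^*=\varphi^*[\mathcal{L}_j,\mathcal{L}_k]$, so the brackets of the $\mathcal{L}_k$ are determined on the dense open set $\widetilde{W}_1$ by those of the $L_k$. Since $L_1=\partial_u$ and $L_2=-\zeta\,\partial_u+6g_3\partial_{g_2}+\tfrac13 g_2^2\partial_{g_3}$, with the $g$-part commuting with $\partial_u$, a one-line computation using $\partial_u\zeta=-\wp$ gives $[L_1,L_2]=\wp\,L_1$; and since $\varphi^*\wp=x_2$ this yields $[\mathcal{L}_1,\mathcal{L}_2]=x_2\mathcal{L}_1$ on $\widetilde{W}_1$. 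As both sides are polynomial vector fields on $\mathbb{C}^3$, the identity extends from the dense set $\widetilde{W}_1$ to all of $\mathbb{C}^3$. Alternatively one may compute the three components of $[\mathcal{L}_1,\mathcal{L}_2]$ directly from the explicit fields and check they equal $x_2x_3,\ x_2x_4,\ 12x_2^2x_3$, which are precisely the components of $x_2\mathcal{L}_1$.

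Finally I would package these relations as a polynomial Lie algebra over $R=\mathbb{C}[x_2,x_3,x_4]$. For generators $D,D'\in\{\mathcal{L}_0,\mathcal{L}_1,\mathcal{L}_2\}$ and $f,g\in R$ the derivation rule gives $[fD,gD']=f\,D(g)\,D'-g\,D'(f)\,D+fg\,[D,D']$, so since $D(g),D'(f)\in R$ and each $[D,D']$ is by the above an $R$-multiple of a generator, the $R$-module spanned by $\mathcal{L}_0,\mathcal{L}_1,\mathcal{L}_2$ is closed under the bracket. The Jacobi identity needs no separate check, as these are genuine vector fields on $\mathbb{C}^3$. The structure coefficients $0,\,1,\,2,\,x_2$ carry the correct weights — $\deg[\mathcal{L}_i,\mathcal{L}_j]=i+j$ matches $\deg(c_{ij}^k\mathcal{L}_k)$, e.g. $\deg x_2+\deg\mathcal{L}_1=2+1=3=\deg\mathcal{L}_1+\deg\mathcal{L}_2$ — so the algebra is graded, and the relations are unambiguous because the three fields are $R$-linearly independent off $\widetilde{\Delta}$, where $\det\mathcal{T}_1\neq0$. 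The only genuine work is the bracket $[\mathcal{L}_1,\mathcal{L}_2]=x_2\mathcal{L}_1$; I expect no real obstacle, the one point meriting care being the passage from the identity on the dense open $\widetilde{W}_1$, where the transfer is valid, to a global polynomial identity on $\mathbb{C}^3$, which is justified since all quantities involved are polynomial.
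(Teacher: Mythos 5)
Your proposal is correct and follows essentially the same route as the paper, which obtains the corollary by transferring the known commutation rules for $L_0,L_1,L_2$ (from \cite{FS-1882, BL-08}) through the uniformizing diffeomorphism $\varphi$ and invoking the notion of polynomial Lie algebra from \cite{BL-02}; your computation $[L_1,L_2]=\wp\,L_1$ from $\partial_u\zeta=-\wp$, the extension of the resulting polynomial identity from the Zariski-dense set $\widetilde{W}_1$ to all of $\mathbb{C}^3$, and the Euler-operator argument for $[\mathcal{L}_0,\mathcal{L}_k]=k\mathcal{L}_k$ are all sound (the direct component check of $[\mathcal{L}_1,\mathcal{L}_2]=x_2\mathcal{L}_1$ also verifies). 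The only cosmetic slip is writing $\varphi^*\wp=x_2$ where you mean $\varphi^*x_2=\wp$; it does not affect the argument.
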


\section{Differential equations on the differential field generator}

Set $\frac{\partial}{\partial\tau_1}f=f'$.
Then according to system $S_1$ the function $x_2=x_2(\tau_0,\tau_1,\tau_2)$ satisfies the differential equation
\[
x_2'''=12x_2x_2'.
\]
Since $g_k'(x_2,x_3,x_4)=0,\; k=2,3$, we obtain the classical result:
\begin{cor} \label{cor6}
1. The function
\[
U(u)=2x_2=2\wp(u;g_2,g_3),
\]
determines a two-parametric family of elliptic solutions for the stationary KdV equation
\[
U'''=6UU'.
\]
2. The solution $U(u)$ of the differential equation with constant $a$
\[
U''=3U^2+a
\]
is the elliptic function $U(u)=2\wp(u;g_2,g_3)$, where $g_2=-a$.
\end{cor}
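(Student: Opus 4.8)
The plan is to read the two differential equations directly off the dynamical system $S_1$ and the uniformization provided by the Corollary, and then to fix the integration constants by comparison with the Weierstrass cubic. Part~1 is an immediate substitution, while Part~2 requires passing from the third-order to the second-order equation and matching the constant of integration with the invariant $g_2$.

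First I would establish Part~1. Writing $f'=\partial f/\partial\tau_1$, the system $S_1$ reads $x_2'=x_3$, $x_3'=x_4$, $x_4'=12x_2x_3$, so that $x_2''=x_4$ and $x_2'''=12x_2x_3=12x_2x_2'$, the equation already recorded above. Since $\mathcal{L}_1$ corresponds to $L_1=\partial_u$, the trajectory coordinate is $\tau_1=u$, and the uniformizing diffeomorphism $\varphi$ identifies $x_2$ with $\wp(u;g_2,g_3)$. Setting $U=2x_2$ I would then compute $U'''=2x_2'''=24x_2x_2'$ and $6UU'=6(2x_2)(2x_2')=24x_2x_2'$, whence $U'''=6UU'$. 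The two free parameters are precisely the coordinates $(g_2,g_3)\in B_1$, which gives the asserted two-parametric family of elliptic solutions $U(u)=2\wp(u;g_2,g_3)$.

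For Part~2 I would first integrate once. Since $6UU'=3(U^2)'$, the equation $U'''=6UU'$ integrates to $U''=3U^2+a$ with an integration constant $a$; conversely every solution of $U''=3U^2+a$ satisfies $U'''=6UU'$ by differentiation. To pin down $a$ I would use the Weierstrass relation $(\wp')^2=4\wp^3-g_2\wp-g_3$: differentiating it yields $\wp''=6\wp^2-\tfrac12 g_2$, so for $U=2\wp$ one has $U''=2\wp''=12\wp^2-g_2=3U^2-g_2$. Comparison with $U''=3U^2+a$ forces $a=-g_2$, i.e. $g_2=-a$.

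Finally I would account for the second constant and for all solutions. Multiplying $U''=3U^2+a$ by $U'$ and integrating gives the first integral $(U')^2=2U^3+2aU+2b$; the same manipulation of the Weierstrass cubic gives $(U')^2=4(\wp')^2=16\wp^3-4g_2\wp-4g_3$, so that $b$ is fixed by $g_3=-\tfrac12 b$ and the only remaining freedom is the translation $u\mapsto u-u_0$. By the uniqueness theorem for the second-order equation, a solution prescribed by its value and first derivative at one point coincides with $2\wp(u-u_0;-a,g_3)$ for the matching $g_3$ and $u_0$, which completes the identification. The only genuinely delicate point is this last matching: one must check that the two-parameter Weierstrass family, together with the translation, realizes arbitrary initial data and that the evenness of $\wp$ does not obstruct the choice of phase. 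Everything else is the algebraic bookkeeping of constants carried out above.
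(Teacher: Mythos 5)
Your proposal is correct and follows essentially the same route as the paper: Part~1 is read off the system $S_1$ via $x_2'''=12x_2x_2'$ together with the fact that $g_2,g_3$ are first integrals of $\mathcal{L}_1$, and Part~2 identifies the integration constant through the relation $x_2''=x_4=6x_2^2-\tfrac12 g_2$ (equivalently $\wp''=6\wp^2-\tfrac12 g_2$), giving $a=-g_2$. Your additional matching of the second constant $b$ with $g_3$ and of the initial data by translation is a harmless elaboration of what the paper leaves implicit.
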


The functions $g_k=g_k(\tau_0,\tau_2),\; k=2,3$, 
change while moving along the trajectories of fields $\mathcal{L}_0$ and $\mathcal{L}_2$
in such a way that the function $U(u;g_2,g_3)$ remains elliptic.

{\bf Problem.} 
Describe the dependence of the family of solutions $\wp(u;g_2,g_3)$ of the stationary KdV equation
on the variation of parameters.

We have:
\[
\frac{\partial}{\partial\tau_0}x_2=2x_2.
\]
Therefore, $x_2=ce^{2\tau_0}$. Choose a value $\tau_{0,*}$ for which the functions $g_k(\tau_0,\tau_2),\; k=2,3$, are regular,
and set $g_{k,*}(\tau_2)=g_k(\tau_{0,*},\tau_2)$. 
We obtain
\[
\wp(u;g_2,g_3)=\wp(u;g_{2,*}(\tau_2),g_{3,*}(\tau_2))e^{2(\tau_0-\tau_{0,*})}.
\]
We have
\[
\frac{\partial}{\partial\tau_2}x_2=\frac{2}{3}x_4-2x_2^2.
\]
Set $\widetilde{\mathcal{L}}_2=\mathcal{L}_2+x_2\mathcal{L}_0$ and $\widetilde{\mathcal{L}}_2=\frac{\partial}{\partial\widetilde{\tau}_2}$.
Note that $[\mathcal{L}_1,\widetilde{\mathcal{L}}_2]=x_3\mathcal{L}_0\neq 0$.

\begin{thm}
The function $\wp=\wp(u;g_2,g_3)$, where $u=\tau_1$ and $g_k=g_k(\tau_0,\tau_2)$,
satisfies the heat equation
\[
\frac{\partial}{\partial\widetilde{\tau}_2}\wp=\frac{2}{3}\frac{\partial^2}{\partial\tau_1^2}\wp
\]
in nonholonomic frame $(\mathcal{L}_0,\mathcal{L}_1,\widetilde{\mathcal{L}}_2)$.
\end{thm}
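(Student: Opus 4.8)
The plan is to reduce the statement to a polynomial identity in $\mathbb{C}[x_2,x_3,x_4]$ by means of the uniformizing diffeomorphism $\varphi$. By the Corollary on uniformization we have $\wp(u;g_2,g_3)=x_2$, $\wp'=x_3$, $\wp''=x_4$, and by the Theorem that $\varphi$ intertwines $L_k$ with $\mathcal{L}_k$ the derivation along the trajectory of $\mathcal{L}_k$ acts on $\varphi^*$-pullbacks exactly as $\mathcal{L}_k$ acts on polynomials. Thus $\frac{\partial}{\partial\tau_1}$ is realized by $\mathcal{L}_1$, and since $\widetilde{\mathcal{L}}_2=\mathcal{L}_2+x_2\mathcal{L}_0=\frac{\partial}{\partial\widetilde{\tau}_2}$ by definition, the operator $\frac{\partial}{\partial\widetilde{\tau}_2}$ is realized by $\widetilde{\mathcal{L}}_2$. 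Everything therefore comes down to comparing $\widetilde{\mathcal{L}}_2 x_2$ with $\frac{2}{3}\mathcal{L}_1^2 x_2$.

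First I would compute the right-hand side. Applying $\mathcal{L}_1$ to $x_2$ gives $x_3$, and applying it again gives $\mathcal{L}_1 x_3=x_4$; hence $\mathcal{L}_1^2 x_2=x_4$ and $\frac{2}{3}\frac{\partial^2}{\partial\tau_1^2}\wp=\frac{2}{3}x_4$. Note that here $\frac{\partial^2}{\partial\tau_1^2}$ is unambiguously the second power $\mathcal{L}_1^2$, so no ordering issue from the nonholonomic frame can enter. Next I would compute the left-hand side: using $\mathcal{L}_2 x_2=\frac{2}{3}(x_4-3x_2^2)$ and $\mathcal{L}_0 x_2=2x_2$ from the dynamical systems $S_0$ and $S_2$,
\[
\widetilde{\mathcal{L}}_2 x_2=\mathcal{L}_2 x_2+x_2\,\mathcal{L}_0 x_2=\tfrac{2}{3}(x_4-3x_2^2)+2x_2^2=\tfrac{2}{3}x_4.
\]
Comparing the two computations yields $\frac{\partial}{\partial\widetilde{\tau}_2}\wp=\frac{2}{3}x_4=\frac{2}{3}\frac{\partial^2}{\partial\tau_1^2}\wp$, which is the asserted heat equation.

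The conceptual content, rather than any computational difficulty, is the reason for replacing $\mathcal{L}_2$ by $\widetilde{\mathcal{L}}_2$: the quadratic term $-2x_2^2$ produced by $\mathcal{L}_2 x_2$ is exactly cancelled by the $+2x_2^2$ coming from the $x_2\mathcal{L}_0$ correction, leaving the pure $x_4=\wp''$ term that the heat equation requires. The only point needing care is the interpretation of ``heat equation in the nonholonomic frame'': because $[\mathcal{L}_1,\widetilde{\mathcal{L}}_2]=x_3\mathcal{L}_0\neq 0$, the triple $(\mathcal{L}_0,\mathcal{L}_1,\widetilde{\mathcal{L}}_2)$ is not a coordinate frame, so the symbols $\frac{\partial}{\partial\tau_1}$ and $\frac{\partial}{\partial\widetilde{\tau}_2}$ must be read as directional derivatives along the respective fields. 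I expect this bookkeeping --- confirming that the equation is to be understood as the operator identity $\widetilde{\mathcal{L}}_2=\frac{2}{3}\mathcal{L}_1^2$ applied to $x_2$, and that the noncommutativity does not spoil the second-order term --- to be the main (and only real) subtlety, the underlying algebra being immediate from the explicit form of the fields.
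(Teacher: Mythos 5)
Your computation is correct and coincides with the paper's own (implicit) derivation: the lines immediately preceding the theorem record $\mathcal{L}_2x_2=\tfrac{2}{3}x_4-2x_2^2$ and define $\widetilde{\mathcal{L}}_2=\mathcal{L}_2+x_2\mathcal{L}_0$, so that $\widetilde{\mathcal{L}}_2x_2=\tfrac{2}{3}x_4=\tfrac{2}{3}\mathcal{L}_1^2x_2$, exactly as you argue. Your remarks on the cancellation of the $x_2^2$ terms and on reading the derivatives as directional derivatives in the nonholonomic frame match the paper's intent.
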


See in \cite{BL-04} the derivation of heat equations in nonholonomic frames in the case $g>1$.

The system $\widehat S_2 + \alpha x_2\widehat S_0$ on $\mathbb{C}^3$, where $\alpha$ is a parameter
(see Corollary \ref{S-hat}),
corresponds to the dynamical system
\begin{align*}
x_2' &= \frac{2}{3}x_4 + 2(\alpha-1)x_2^2;\\
x_4' &= 3x_6 + 2(2\alpha+1)x_2x_4;\\
x_6' &= 6(\alpha+1)x_2x_6.
\end{align*}

This system leads to the differential equation in $U = {1 \over k} x_2$\,:
\[
 U''' - 2 k (7 \alpha + 2) U U'' - 2 k (4 \alpha - 1) (U')^2 + 24 k^2 (3 \alpha^2 + \alpha - 1) U^2 U' - 24 k^3 (\alpha + 1) (\alpha - 1) (2 \alpha + 1) U^4 = 0.
\]
For $\alpha = -2$, $k = - 1/12$ we obtain the equation
\[
 U''' - 2 U U'' + 3 (U')^2 - {1 \over 8} (6 U' - U^2)^2=0
\]
with solution (see \cite{Poly}, Corollary 8.4, and \cite{ACH}, formula (33))
\[
 U(t) = - 2 \left( {1 \over t - a_1} + {1 \over t - a_2} + {1 \over t - a_3} \right).
\]

\section{Polynomial dynamical systems on $\mathbb{C}^6$}
Consider the bundle
\[
\pi \colon U_2 \to B_2
\]
of Jacobian varieties of non-singular curves of genus $2$
\begin{equation*} \label{e4}
V_\lambda=\{(x,y)\in\mathbb{C}^2\, :  y^2=x^5+\lambda_4x^3+\lambda_6x^2+\lambda_8x+\lambda_{10}\}
\end{equation*}
with base
\[
B_2=\{ (\lambda_4,\lambda_6,\lambda_8,\lambda_{10})\in \mathbb{C}^4\;:\;\Delta(\lambda)\neq 0 \},
\]
where the discriminant $\Delta(\lambda)$ is described in Appendix B.
The fiber over the point $\lambda\in B_2$ is the Jacobi variety $J_\lambda=\mathbb{C}^2/\Gamma_\lambda$ of the curve $V_\lambda$,
where $\Gamma_\lambda$ is a lattice generated by period matrices $2\omega, 2\omega'$ of holomorphic differentials
(see Appendix B).

Denote by $\mathcal{F}_2$ the field of $C^\infty$-functions on $U_2$,
whose restrictions to fibers $J_\lambda$ are hyperelliptic functions of genus $2$.

For any point $\lambda\in B_2$ a hyperelliptic sigma-function $\sigma(u;\lambda)=\sigma(u_1,u_3;\lambda)$ of genus $2$
is defined uniquely 
(see Appendix B). It determines the functions
\begin{equation} \label{zeta}
\zeta_k(u_1,u_3;\lambda)= \frac{\partial}{\partial u_k}\ln \sigma(u_1,u_3;\lambda),\; k=1,3,
\end{equation}
and for $i+j\geqslant 2$ the functions
\begin{equation} \label{wp}
\wp_{i,3j}(u_1,u_3;\lambda)=-\frac{\partial^i}{\partial u_1^i} \frac{\partial^j}{\partial u_3^j}\ln \sigma(u_1,u_3;\lambda).
\end{equation}
Thus,
\[
\frac{\partial}{\partial u_1}\wp_{i,3j}=\wp_{i+1,3j}\; \text{ and }\; \frac{\partial}{\partial u_3}\wp_{i,3j}=\wp_{i,3(j+1)}.
\]

The functions $\wp_{i,3j}(u;\lambda)=\wp_{i,3j}(u_1,u_3;\lambda)$ are Abelian functions in $u_1$ and $u_3$.
The function $\sigma(u;\lambda)$ is odd in $u=(u_1,u_3)$, therefore $\wp_{i,3j}(-u;\lambda)=(-1)^{i+j}\wp_{i,3j}(u;\lambda)$.

Please note: 
to control the grading, we have altered the notation used in our works \cite{BEL-97-1, BEL-97-2, BL-08, BEL-12}
on the theory of sigma-functions.
In the notation of these works we have
\[
\wp_{i,3j}(u_1,u_3;\lambda)=\wp_{1,\dots,1,3,\ldots,3}(u_1,u_3;\lambda),
\]
where the subscript in the right has $i$ ones and $j$ threes.

By setting $\deg u_k=-k$ and $\deg \lambda_j=j$, we obtain that $\deg \wp_{i,3j}=i+3j$.
As will be seen further, this allows to build homogeneous polynomial dynamical systems on $\mathbb{C}^6$
with appropriately graded coordinates.

Denote the $\mathcal{F}_2$-module of derivations of the field $\mathcal{F}_2$ by $\Der(\mathcal{F}_2)$.
In \cite{BL-08} a method for constructing the operators $\mathcal{L}_i,\, i=0,2,4,6$ using the operators $Q_i,\, i=0,2,4,6$,
that annihilate the sigma function $\sigma(u_1,u_3;\lambda)$ is given.
Operators $\mathcal{L}_i,\, i=0,2,4,6$ along with operators
$\mathcal{L}_1 = {\partial \over \partial u_1}, \mathcal{L}_3 = {\partial \over \partial u_3}$
give the basis of the $\mathcal{F}_2$-module $\Der(\mathcal{F}_2)$.
In the proof of Theorem \ref{t-25} we give a detailed derivation of operators $\mathcal{L}_i$.

Consider the complex linear space $\mathbb{C}^6$ with
graded variables $X=(x_2,x_3,x_4),\, Z=(z_4,z_5,z_6),\; \deg x_i=i,\, \deg z_j=j$,
and the graded space $\mathbb{C}^4$ with coordinates $\lambda=(\lambda_4,\lambda_6,\lambda_8,\lambda_{10}),\; \deg \lambda_k=k$.
In \cite{BL-08} a homogeneous polynomial map
\[
\pi_2 \colon \mathbb{C}^6 \to \mathbb{C}^4, \quad \pi_2 (X,Z)=\lambda,
\]
was introduced,
where
\begin{align}
\pi_2^*\lambda_4=\widehat\lambda_4 &= \frac{1}{2}(x_4-6x_2^2-4z_4), \label{3} \\
\pi_2^*\lambda_6=\widehat\lambda_6 &= - \frac{1}{4}(2x_2(x_4+4z_4)-8x_2^3-x_3^2-2z_6), \label{4} \\
\pi_2^*\lambda_8=\widehat\lambda_8 &= - \frac{1}{2}(x_2z_6+x_4z_4-8x_2^2z_4-2z_4^2-x_3z_5), \label{5} \\
\pi_2^*\lambda_{10}=\widehat\lambda_{10} &= \frac{1}{4}(8x_2z_4^2-2z_4z_6+z_5^2).\label{6}
\end{align}

Consider the space $\mathbb{C}^7$ with graded coordinates $(x_2,x_4,z_4,x_6,z_6,w_8,z_{10})$.
From formulas (\ref{3})--(\ref{6}) it follows that the map $\pi_2$ decomposes into
\[
\pi_2=\pi_2'\pi_2''\;:\; \mathbb{C}^6 \to \mathbb{C}^7 \to \mathbb{C}^4,
\]
where $\pi_2''(x_2,x_3,x_4,z_4,z_5,z_6)= (x_2,x_4,z_4,x_3^2,z_6,x_3z_5,z_5^2)$ and
\begin{align*}
(\pi_2')^*\lambda_4 &= \frac{1}{2}(x_4-6x_2^2-4z_4), \\
(\pi_2')^*\lambda_6 &= - \frac{1}{4}(2x_2(x_4+4z_4)-8x_2^3-x_6-2z_6),  \\
(\pi_2')^*\lambda_8 &= - \frac{1}{2}(x_2z_6+x_4z_4-8x_2^2z_4-2z_4^2-w_8), \\
(\pi_2')^*\lambda_{10} &= \frac{1}{4}(8x_2z_4^2-2z_4z_6+z_{10}).
\end{align*}
The image of $\pi_2'$ is the hypersurface $C^6 \subset \mathbb{C}^7$ determined by the equation $w_8^2=x_6z_{10}$.

The Jacobi matrix $J(\pi_2 )$ of $\pi_2$ is a ($6\times 4$)-matrix with polynomial coefficients.
The point $(X,Z)\in \mathbb{C}^6$ is singular if and only if all $15$ polynomials,
equal to ($4\times 4$)-minors of the matrix $J(\pi_2 )$, are equal to zero.
A direct check shows that if $(X,Z)\in \mathbb{C}^6$ is singular, then $\pi_2(X,Z)\in \Delta$.

Set
\[
\widetilde{\Delta}=\{ (X,Z)\in \mathbb{C}^6\; |\; \pi_2(X,Z)\in \Delta \}\;  \text{ and }\; \widetilde{W}_2=\mathbb{C}^6\setminus \widetilde{\Delta}.
\]
Thus, the regular algebraic bundle
\[
\pi_2 \colon \widetilde{W}_2 \to B_2
\]
is defined. It's fibers are non-singular two-dimensional complex algebraic submanifolds in $\mathbb{C}^6$.

On the space $\widetilde{W}_2$ there is the involution $\tau(x_2,x_3,x_4,z_4,z_5,z_6)=(x_2,-x_3,x_4,z_4,-z_5,z_6)$.

Set
\[
\widetilde{U}_2 = \{ (u_1,u_3;\lambda)\in U_2 \;: \; \sigma(u_1,u_3;\lambda)\neq 0 \}.
\]

On the space $\widetilde{U}_2$ there is the involution $\widetilde \tau(u_1,u_3;\lambda)=(-u_1,-u_3;\lambda)$.
The map $\widetilde\pi \colon \widetilde{U}_2 \to B_2$ is a bundle with the Jacobi variety of the curve $V_\lambda$
as fiber over point $\lambda\in B_2$. The involution $\widetilde \tau$ is fiberwise.
The quotientspace $\widetilde{U}_2/\widetilde \tau$ is the space of a bundle over $B_2$
with fiber over point $\lambda\in B_2$ the
Kummer variety of the curve $V_\lambda$ (see \cite{Baker-07, Hudson-05}).

Set $\wp_2=\wp_{2,0}(u_1,u_3;\lambda)$ and $\wp_4=\wp_{1,3}(u_1,u_3;\lambda)$.
In \cite{BL-08} the following uniformization theorem was proved:

\begin{thm}
The map
\[
\varphi \colon \widetilde{U}_2 \to \widetilde{W}_2, \quad \varphi(u_1,u_3;\lambda)=(X,Z),
\]
where $x_2=\wp_2,\; x_3=\wp_2',\; x_4=\wp_2'',\; z_4=\wp_4,\; z_5=\wp_4',\; z_6=\wp_4''$
and $f'=\frac{\partial}{\partial u_1}f$,
determines a fibered and equivariant with respect to $\widetilde \tau$ and $\tau$ diffeomorphism of bundles
\[ \begin{CD}
\widetilde{U}_2 @ >>> \widetilde{W}_2\\
@ VVV @ VVV\\
B_2@ = B_2
\end{CD}\,. \]

\end{thm}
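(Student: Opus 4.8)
The plan is to verify, in turn, that $\varphi$ is well defined as a map into $\widetilde W_2$, that it commutes with the two bundle projections to $B_2$, that it intertwines the involutions $\widetilde\tau$ and $\tau$, and finally that on each fiber it is a bijective local diffeomorphism. Since $\varphi$ leaves $\lambda$ untouched, commutation with the projections is automatic once well-definedness is established, and the whole statement reduces to a fiberwise claim for each fixed $\lambda\in B_2$: the holomorphic map sending $u=(u_1,u_3)$ to the six-tuple of $\wp$-values is a diffeomorphism of the punctured Jacobian $J_\lambda\setminus\{\sigma=0\}$ onto the non-singular fiber $\pi_2^{-1}(\lambda)\cap\widetilde W_2$.

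\textbf{Well-definedness.} First I would check that the image point $(X,Z)=\varphi(u;\lambda)$ satisfies $\pi_2(X,Z)=\lambda$, i.e. that the right-hand sides of \eqref{3}--\eqref{6}, with $x_2,x_3,x_4$ replaced by $\wp_2,\wp_2',\wp_2''$ and $z_4,z_5,z_6$ by $\wp_4,\wp_4',\wp_4''$, reduce identically to $\lambda_4,\lambda_6,\lambda_8,\lambda_{10}$. These are precisely the fundamental relations among the genus-$2$ Abelian functions $\wp_{i,3j}$ recorded in Appendix B; they follow from the differential equations annihilating $\sigma(u;\lambda)$ and are the genus-$2$ analogue of the single relation $\wp'^2=4\wp^3-g_2\wp-g_3$ used in the elliptic case. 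Since $\lambda\in B_2$ means $\Delta(\lambda)\neq0$, this also guarantees the image avoids $\widetilde\Delta$, so $\varphi$ really maps $\widetilde U_2$ into $\widetilde W_2$ and is fibered over $B_2$.

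\textbf{Equivariance.} Because $\sigma(u;\lambda)$ is odd, \eqref{wp} gives $\wp_{i,3j}(-u)=(-1)^{i+j}\wp_{i,3j}(u)$. Reading off the six coordinates $x_2=\wp_{2,0}$, $x_3=\wp_{3,0}$, $x_4=\wp_{4,0}$, $z_4=\wp_{1,3}$, $z_5=\wp_{2,3}$, $z_6=\wp_{3,3}$, one sees that exactly $x_3$ and $z_5$ change sign under $u\mapsto-u$, which is precisely the action of $\tau$. Hence $\varphi\circ\widetilde\tau=\tau\circ\varphi$.

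\textbf{Diffeomorphism, and the main obstacle.} It remains to show $\varphi$ is a fiberwise bijection with everywhere-invertible differential. I would obtain bijectivity from the classical solvability and uniqueness of Jacobi's inversion problem for $V_\lambda$: the six Abelian functions generate the field of functions on $J_\lambda$ and, off the theta divisor $\{\sigma=0\}$, separate points---in particular $x_3=\wp_{3,0}$, being odd, distinguishes $u$ from $-u$---so $\varphi$ is injective, while every non-singular point of the fiber is attained because the inversion problem is solvable. For the differential I would compute the two columns $\partial\varphi/\partial u_1$ and $\partial\varphi/\partial u_3$ using $\partial_{u_1}\wp_{i,3j}=\wp_{i+1,3j}$ and $\partial_{u_3}\wp_{i,3j}=\wp_{i,3(j+1)}$, and show they are independent exactly off $\{\sigma=0\}$; this is where one must reuse the fundamental relations to re-express the higher functions $\wp_{5,0},\wp_{4,3},\wp_{1,6}$ appearing in these columns in terms of the six base coordinates. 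I expect the genuine difficulty to lie here: matching the locus where the fiberwise Jacobian of $\varphi$ degenerates with the excluded divisor $\{\sigma=0\}$, equivalently with $\widetilde\Delta$, so that $\varphi$ is a local diffeomorphism at every retained point and its image is the entire non-singular fiber rather than a proper subset of it.
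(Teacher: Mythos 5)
The paper does not prove this theorem at all: it is imported verbatim from \cite{BL-08}, and the surjectivity it asserts is exactly the sigma-function form of the Dubrovin--Novikov birational equivalence that the Introduction says is ``essentially used in this work''. So there is no internal proof to compare you against; what can be judged is whether your outline matches the route of the cited sources, and in structure it does. Your well-definedness and equivariance steps are correct (the parity count showing that exactly $x_3=\wp_{3,0}$ and $z_5=\wp_{2,3}$ change sign is right). One imprecision: to verify $\pi_2\circ\varphi=\widetilde\pi$ from \eqref{3}--\eqref{6} you need, besides the five relations actually listed in Appendix B (which express $\wp_{4,0},\wp_{3,3},\wp_{2,6},\dots$ through $\wp_{2,0},\wp_{1,3},\wp_{0,6}$ and $\lambda$), also the quadratic relations for products of the odd functions --- $\wp_{3,0}^2$, $\wp_{3,0}\wp_{2,3}$, $\wp_{2,3}^2$ enter \eqref{4}--\eqref{6} through $x_3^2$, $x_3z_5$, $z_5^2$ --- and those are \emph{not} recorded in Appendix B; they are Baker's quadratic three-index relations and must be pulled from \cite{Baker-03, BEL-97-2}.

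The substantive gap is the one you flag yourself. Injectivity off $\{\sigma=0\}$ and, above all, surjectivity onto the \emph{entire} two-dimensional affine fiber $\pi_2^{-1}(\lambda)$ (which for $\lambda\in B_2$ lies wholly in $\widetilde W_2$) are precisely the content of the sigma-function solution of the Jacobi inversion problem for genus $2$; invoking ``classical solvability and uniqueness'' does not discharge this, since one must show that these particular six functions separate points of $J_\lambda\setminus\{\sigma=0\}$ and that no point of the affine fiber is missed. Similarly, showing that the $2\times 6$ fiberwise differential, whose entries involve $\wp_{5,0},\wp_{4,3},\wp_{3,6}$, has rank $2$ everywhere off the theta divisor requires carrying out the re-expression via the fundamental relations rather than announcing it. Your proposal is the correct skeleton and honestly locates the difficulty, but those two steps \emph{are} the theorem, and they are exactly what \cite{BL-08} and \cite{BEL-97-2, BEL-12} supply.
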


Let us introduce the derivations $\mathcal{L}_k,\;k=0,1,2,3,4,6$, of the ring of polynomials on the space
$\mathbb{C}^6$ with coordinates $(X,Z)$ such, that
\[
L_k\varphi^*P=\varphi^*\mathcal{L}_kP
\]
for any polynomial $P=P(X,Z)$.

The homomorphism $\varphi^*$ is a ring homomorphism, and the operators $\mathcal{L}_k$ are derivations.
Thus to construct the operators $\mathcal{L}_k$ it is sufficient
to take only the multiplicative generators of the ring $\mathbb{C}[X,Z]$
as polynomials $P$.

Let us show that the fields $\mathcal{L}_k$ determine graded one-dimensional dynamical systems on $\mathbb{C}^6$.
The field $\mathcal{L}_0$ is the Euler field. Thus the system $S_0$ has the form
\[
\frac{\partial}{\partial\tau_0}x_k=kx_k,\; k=2,3,4,
\]
\[
\frac{\partial}{\partial\tau_0}z_l=kz_l,\; l=4,5,6.
\]

Set $L_1f=f'$. Let us introduce the polynomials
\begin{equation}\label{7}
P_5=4(3x_2x_3+z_5) \; \text{ and } \; P_7=4(2x_2z_5+x_3z_4).
\end{equation}
Since $\mathcal{L}_1=\partial_{u_1}$, using the results of \cite{BL-08, BEL-12} we obtain:
\begin{thm}\label{t-9}
System $S_1$ has the form
\[
x_2'=x_3,\quad x_3'=x_4,\quad x_4'=P_5,
\]
\[
z_4'=z_5,\quad z_5'=z_6,\quad z_6'=P_7.
\]
\end{thm}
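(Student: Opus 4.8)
The plan is to read off $S_1$ from the intertwining relation $L_1\varphi^\ast P=\varphi^\ast\mathcal{L}_1P$ with $L_1=\partial_{u_1}$: applying $\mathcal{L}_1$ to a coordinate and pulling back by $\varphi$ is the same as differentiating the corresponding $\wp$-function in $u_1$. Through $\varphi$ the six coordinates are $\varphi^\ast x_2=\wp_{2,0}$, $\varphi^\ast x_3=\wp_{3,0}$, $\varphi^\ast x_4=\wp_{4,0}$, $\varphi^\ast z_4=\wp_{1,3}$, $\varphi^\ast z_5=\wp_{2,3}$, $\varphi^\ast z_6=\wp_{3,3}$. Since $\varphi^\ast$ is a ring homomorphism, it suffices to compute $\partial_{u_1}$ of each of these and to re-express the outcome as a polynomial in $X$ and $Z$.

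First I would clear away the four shift equations. By the rule $\partial_{u_1}\wp_{i,3j}=\wp_{i+1,3j}$ recorded just after \eqref{wp}, one has $\partial_{u_1}\wp_{2,0}=\wp_{3,0}$, $\partial_{u_1}\wp_{3,0}=\wp_{4,0}$, $\partial_{u_1}\wp_{1,3}=\wp_{2,3}$, $\partial_{u_1}\wp_{2,3}=\wp_{3,3}$, i.e. $x_2'=x_3$, $x_3'=x_4$, $z_4'=z_5$, $z_5'=z_6$. These use nothing beyond the definition \eqref{wp} of the $\wp$-functions.

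The two remaining equations $x_4'=P_5$ and $z_6'=P_7$ carry the content, and I would obtain them from the fact that the curve parameters are first integrals of the $\mathcal{L}_1$-flow. Indeed $\mathcal{L}_1=\partial_{u_1}$ is tangent to the fibers of $\widetilde\pi\colon\widetilde U_2\to B_2$, so by the uniformization theorem $\pi_2\circ\varphi=\widetilde\pi$ gives $\varphi^\ast\widehat\lambda_k=\lambda_k$, whence $\mathcal{L}_1\widehat\lambda_k=0$ for the pulled-back parameters $\widehat\lambda_k$ of \eqref{3}--\eqref{6}. Now $\widehat\lambda_4=\tfrac{1}{2}(x_4-6x_2^2-4z_4)$ is linear in $x_4$, so expanding $\mathcal{L}_1\widehat\lambda_4=0$ and inserting $x_2'=x_3$, $z_4'=z_5$ gives at once $x_4'=12x_2x_3+4z_5=P_5$. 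Similarly $\widehat\lambda_6$ of \eqref{4} is linear in $z_6$; expanding $\mathcal{L}_1\widehat\lambda_6=0$ and substituting $x_2'=x_3$, $x_3'=x_4$, $z_4'=z_5$ together with the freshly found $x_4'=P_5$ causes every remaining term to cancel and leaves $z_6'=4(x_3z_4+2x_2z_5)=P_7$. A routine check confirms that $\mathcal{L}_1\widehat\lambda_8=0$ and $\mathcal{L}_1\widehat\lambda_{10}=0$ are then satisfied identically, which is the compatibility one expects.

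The main obstacle is not this elementary algebra but the justification of $\mathcal{L}_1\widehat\lambda_k=0$, equivalently the assertion that the polynomials \eqref{3}--\eqref{6} are genuinely the moduli of $V_\lambda$. Analytically this is the system of fundamental differential relations among genus~$2$ $\wp$-functions, of which $\wp_{4,0}=6\wp_{2,0}^2+4\wp_{1,3}+2\lambda_4$ (the reading of $2\widehat\lambda_4=x_4-6x_2^2-4z_4$ through $\varphi$) is the first instance. I would not reprove these here; they follow from the theory of hyperelliptic sigma-functions and from the operators $Q_i$ annihilating $\sigma(u_1,u_3;\lambda)$, and I would cite \cite{BL-08, BEL-12}, thereby reducing the derivation of $S_1$ to the differentiations above.
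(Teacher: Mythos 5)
Your proof is correct and follows essentially the same route as the paper, which simply invokes the fundamental relations of \cite{BL-08, BEL-12} (equivalently, the identities \eqref{3}--\eqref{6} together with $\varphi^*\widehat\lambda_k=\lambda_k$) and the definitional shifts $\partial_{u_1}\wp_{i,3j}=\wp_{i+1,3j}$. You merely make explicit the algebra that the paper leaves to the citation, namely that differentiating $\widehat\lambda_4$ and $\widehat\lambda_6$ along $\mathcal{L}_1$ yields $x_4'=P_5$ and $z_6'=P_7$, with $\widehat\lambda_8,\widehat\lambda_{10}$ then annihilated identically.
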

Let us introduce the polynomials
\begin{equation}\label{8}
G_6=\frac{1}{2}(6x_2z_4-z_6).
\end{equation}
We have
\begin{align}
G_6' &= 3(x_3z_4+x_2z_5)-\frac{1}{2}P_7=x_3z_4-x_2z_5, \label{9}\\
G_6'' &= x_4z_4-x_2z_6, \label{10}\\
G_6''' &= 8x_2(x_3z_4-x_2z_5)+ (x_4z_5-x_3z_6)+4z_4z_5. \label{11}
\end{align}
In the theory of hyperelliptic functions $\wp_{i,3j},\;i+j\geqslant2$, 
there is a fundamental relation (see \cite{Baker-03, BEL-97-1, BEL-97-2, BEL-12})
\[
\wp_{0,6}=3\wp_{2,0}\wp_{1,3}-\frac{1}{2}\wp_{3,3}.
\]
Thus,
$\wp_{0,6}=\varphi^*G_6$. Therefore $\varphi^*\mathcal{L}_kG_6=L_k\wp_{0,6}$
for all $k=0,1,2,3,4,6$.
Since the fields $L_1 = {\partial \over \partial u_1}$ and $L_3 = {\partial \over \partial u_3}$ commute,
to define the system $S_3$ (using Theorem \ref{t-9}) it is sufficient to specify the values 
of $\mathcal{L}_3$ on the generators $x_2$ and $z_4$.

Set
$L_3f=\dot{f}$.
\begin{thm}
The system $S_3$, given system $S_1$, is fully determined by the equations
\begin{equation*}
\dot{x}_2=z_5,\quad \dot{z}_4 = x_3z_4-x_2z_5.
\end{equation*}
\end{thm}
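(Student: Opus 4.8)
The plan is to exploit the defining property $L_k\varphi^*P=\varphi^*\mathcal{L}_kP$ at $k=3$ together with the injectivity of $\varphi^*$ (which holds because $\varphi$ is a diffeomorphism onto the dense open set $\widetilde{W}_2\subset\mathbb{C}^6$, so an equality of pullbacks forces an equality of polynomials). Under this correspondence $\mathcal{L}_3$ plays the role of $L_3=\partial_{u_3}$, and on the functions \eqref{wp} the operator $\partial_{u_3}$ simply raises the second index: $\partial_{u_3}\wp_{i,3j}=\wp_{i,3(j+1)}$. Hence each of the two asserted equations reduces to an identity among the $\wp_{i,3j}$, and the whole argument is index bookkeeping plus a single application of the fundamental cubic relation for the genus-$2$ sigma-function.

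First I would verify $\dot x_2=z_5$. Applying the defining property, $\varphi^*(\mathcal{L}_3 x_2)=L_3\varphi^*x_2=\partial_{u_3}\wp_{2,0}=\wp_{2,3}$. Since $\wp_{2,3}=\partial_{u_1}\wp_{1,3}=\varphi^*z_5$, injectivity of $\varphi^*$ gives $\mathcal{L}_3x_2=z_5$. Next I would verify $\dot z_4=x_3z_4-x_2z_5$. Here $\varphi^*(\mathcal{L}_3 z_4)=L_3\varphi^*z_4=\partial_{u_3}\wp_{1,3}=\wp_{1,6}$, and the task is to rewrite $\wp_{1,6}$ in the coordinate ring. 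For this I invoke $\wp_{0,6}=3\wp_{2,0}\wp_{1,3}-\frac{1}{2}\wp_{3,3}=\varphi^*G_6$ and differentiate by $L_1=\partial_{u_1}$: $\wp_{1,6}=L_1\varphi^*G_6=\varphi^*(\mathcal{L}_1G_6)=\varphi^*G_6'$, which by \eqref{9} equals $\varphi^*(x_3z_4-x_2z_5)$. Injectivity again yields $\mathcal{L}_3z_4=x_3z_4-x_2z_5$.

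Finally I would justify that these two equations, given $S_1$, determine $S_3$ completely. By Theorem~\ref{t-9} the remaining coordinates are $\mathcal{L}_1$-derivatives of $x_2$ and $z_4$, namely $x_3=\mathcal{L}_1x_2$, $x_4=\mathcal{L}_1x_3$, $z_5=\mathcal{L}_1z_4$, $z_6=\mathcal{L}_1z_5$, while $[\mathcal{L}_1,\mathcal{L}_3]=0$ because $[\partial_{u_1},\partial_{u_3}]=0$ and $\varphi^*$ intertwines both fields. Commuting $\mathcal{L}_1$ past $\mathcal{L}_3$ then propagates the two known values to all generators: for instance $\mathcal{L}_3x_3=\mathcal{L}_1\mathcal{L}_3x_2=\mathcal{L}_1z_5=z_6$, $\mathcal{L}_3x_4=\mathcal{L}_1z_6=P_7$, and $\mathcal{L}_3z_5=\mathcal{L}_1\mathcal{L}_3z_4=\mathcal{L}_1(x_3z_4-x_2z_5)=x_4z_4-x_2z_6$, which is a welcome consistency check since it reproduces $G_6''$ in \eqref{10}; one further step fixes $\mathcal{L}_3z_6$. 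Thus $\mathcal{L}_3$ is determined on all six generators and hence on all of $\mathbb{C}[X,Z]$.

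I expect the only genuinely nontrivial step to be the polynomial expression for $\wp_{1,6}$: it rests entirely on the fundamental relation $\wp_{0,6}=3\wp_{2,0}\wp_{1,3}-\frac{1}{2}\wp_{3,3}$ (equivalently on $\wp_{0,6}=\varphi^*G_6$ together with formula~\eqref{9}), whereas the computation of $\dot x_2$ and the propagation via $[\mathcal{L}_1,\mathcal{L}_3]=0$ are routine index shifts and applications of $S_1$.
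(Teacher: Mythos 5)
Your proposal is correct and follows essentially the same route as the paper: the value $\dot x_2=\wp_{2,3}=\varphi^*z_5$, the identification $\wp_{0,6}=\varphi^*G_6$ via the fundamental relation so that $\dot z_4=\wp_{1,6}=\varphi^*G_6'=\varphi^*(x_3z_4-x_2z_5)$, and the reduction to the two generators $x_2,z_4$ via $[\mathcal{L}_1,\mathcal{L}_3]=0$ are exactly the ingredients the paper assembles in the discussion preceding the theorem and in Theorem~\ref{t-25} of Appendix~B. Your explicit propagation check ($\mathcal{L}_3z_5=G_6''$, etc.) is a nice confirmation but not a departure from the paper's argument.
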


\begin{cor}
The formula holds

\end{cor}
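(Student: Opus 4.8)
The identity to be established is
\[
\frac{\partial}{\partial u_1}z_4(u_1,u_3)=\frac{\partial}{\partial u_3}x_2(u_1,u_3),
\]
where $x_2(u_1,u_3)=\wp_{2,0}$ and $z_4(u_1,u_3)=\wp_{1,3}$. The plan is to read it off from the two dynamical systems $S_1$ and $S_3$ already described, and then transport it to the analytic side by means of the uniformizing diffeomorphism $\varphi$.

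First I would work entirely at the level of polynomial vector fields on $\mathbb{C}^6$. By Theorem~\ref{t-9} the system $S_1$ gives $z_4'=z_5$, that is $\mathcal{L}_1 z_4=z_5$; and the theorem just proved for $S_3$ gives $\dot x_2=z_5$, that is $\mathcal{L}_3 x_2=z_5$. Comparing these two equalities yields the polynomial identity
\[
\mathcal{L}_1 z_4=z_5=\mathcal{L}_3 x_2
\]
on all of $\mathbb{C}^6$. This is the whole computational content of the statement: the single coordinate $z_5$ is simultaneously the $\mathcal{L}_1$-derivative of $z_4$ and the $\mathcal{L}_3$-derivative of $x_2$.

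Next I would transport this identity to $\widetilde U_2$ using the intertwining relation $L_k\varphi^*P=\varphi^*\mathcal{L}_kP$. Applying $\varphi^*$ to the identity above and using $L_1=\partial_{u_1}$, $L_3=\partial_{u_3}$ gives
\[
\frac{\partial}{\partial u_1}\varphi^*z_4=\varphi^*\mathcal{L}_1 z_4=\varphi^*\mathcal{L}_3 x_2=\frac{\partial}{\partial u_3}\varphi^*x_2.
\]
Since $\varphi^*x_2=\wp_{2,0}=x_2(u_1,u_3)$ and $\varphi^*z_4=\wp_{1,3}=z_4(u_1,u_3)$, this is exactly the asserted formula, with both sides equal to $\varphi^*z_5=\wp_{2,3}$.

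I do not expect a genuine obstacle here: once the two systems $S_1$ and $S_3$ are in hand, the statement is an immediate comparison. The only point worth flagging is conceptual rather than technical---on the analytic side the identity is nothing but the commutativity of mixed partial derivatives, since $\wp_{2,3}=-\partial_{u_1}^2\partial_{u_3}\ln\sigma$ equals both $\partial_{u_1}\wp_{1,3}$ and $\partial_{u_3}\wp_{2,0}$; the content of the corollary is that this relation is faithfully realized inside the polynomial Lie-algebra action, with the role of $\wp_{2,3}$ played by the coordinate $z_5$.
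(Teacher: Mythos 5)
The formula this corollary asserts is not the cross-derivative identity $\partial_{u_1}z_4=\partial_{u_3}x_2$ from the abstract. Because of a typesetting slip the display belonging to the corollary sits just outside the corollary environment: it is equation \eqref{12},
\[
\dot{G}_6 = 3(z_4z_5+x_2G_6')-\tfrac{1}{2}G_6'''= z_4z_5-x_2(x_3z_4-x_2z_5)-\tfrac{1}{2}(x_4z_5-x_3z_6),
\]
i.e.\ an explicit expression for $\mathcal{L}_3G_6$, which is then cited in Theorem \ref{t-13} as the definition of $\dot G_6$. Your argument is correct as far as it goes --- indeed $\mathcal{L}_1z_4=z_5=\mathcal{L}_3x_2$ by $S_1$ and $S_3$, and pulling back along $\varphi$ with $L_1=\partial_{u_1}$, $L_3=\partial_{u_3}$ gives the identity stated in the abstract --- but it proves a different statement and leaves the actual assertion of the corollary untouched.

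The intended derivation uses the same two ingredients you invoke, applied instead to $G_6=\frac{1}{2}(6x_2z_4-z_6)$. Apply $\mathcal{L}_3$: the theorem on $S_3$ gives $\dot x_2=z_5$ and $\dot z_4=x_3z_4-x_2z_5=G_6'$; and since $z_6=z_4''$ by $S_1$ and $[\mathcal{L}_1,\mathcal{L}_3]=0$, one gets $\dot z_6=(\dot z_4)''=G_6'''$. Hence $\dot G_6=\frac{1}{2}\left(6z_5z_4+6x_2G_6'-G_6'''\right)=3(z_4z_5+x_2G_6')-\frac{1}{2}G_6'''$, and substituting the expressions \eqref{9} and \eqref{11} for $G_6'$ and $G_6'''$ yields the second form, since $3z_4z_5-2z_4z_5=z_4z_5$ and $3x_2G_6'-4x_2G_6'=-x_2G_6'$. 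The one idea missing from your write-up is the use of the commutativity of $\mathcal{L}_1$ and $\mathcal{L}_3$ to evaluate $\dot z_6$ from the two equations of $S_3$; without it $\mathcal{L}_3G_6$ cannot be computed, because $S_3$ is only specified on the generators $x_2$ and $z_4$.
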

\begin{equation}\label{12}
\dot{G}_6 = 3(z_4z_5+x_2G_6')-\frac{1}{2}G_6'''= z_4z_5-x_2(x_3z_4-x_2z_5)-\frac{1}{2}(x_4z_5-x_3z_6).
\end{equation}
\begin{cor}
For any $i,j,\; i+j\geqslant 2$ there exist polynomials $\widehat{\wp}_{i,3j}(X,Z)$, such that
\[
\varphi^*\widehat{\wp}_{i,3j}(X,Z)=\wp_{i,3j}(u_1,u_3;\lambda).
\]
\end{cor}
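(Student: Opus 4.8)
The plan is to build each $\widehat{\wp}_{i,3j}$ by repeatedly applying the two commuting polynomial derivations $\mathcal{L}_1$ and $\mathcal{L}_3$ to one of three base polynomials, and then to verify the required identity by pushing the derivations through $\varphi^*$ via the defining intertwining relation $L_k\varphi^*P=\varphi^*\mathcal{L}_kP$. Under $\varphi^*$ the operator $\mathcal{L}_1$ corresponds to $\partial_{u_1}$, so it raises the first index, $\wp_{i,3j}\mapsto\wp_{i+1,3j}$, while $\mathcal{L}_3$ corresponds to $\partial_{u_3}$ and raises the second, $\wp_{i,3j}\mapsto\wp_{i,3(j+1)}$. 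Both $\mathcal{L}_1$ (given on all generators by system $S_1$) and $\mathcal{L}_3$ (given on $x_2,z_4$ by system $S_3$ and extended to the remaining generators using $[\mathcal{L}_1,\mathcal{L}_3]=0$ together with $S_1$) map $\mathbb{C}[X,Z]$ into itself, so every iterate applied to a polynomial is again a polynomial.

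As base cases I would take the three Abelian functions with $i+j=2$, already identified in the excerpt:
\[
\wp_{2,0}=\varphi^*x_2,\qquad \wp_{1,3}=\varphi^*z_4,\qquad \wp_{0,6}=\varphi^*G_6,\qquad G_6=\tfrac{1}{2}(6x_2z_4-z_6).
\]
For the general case I would set, according to the row,
\[
\widehat{\wp}_{i,3j}=\mathcal{L}_1^{\,i-2}\mathcal{L}_3^{\,j}\,x_2\ \ (i\geqslant2),\qquad
\widehat{\wp}_{1,3j}=\mathcal{L}_3^{\,j-1}\,z_4\ \ (j\geqslant1),\qquad
\widehat{\wp}_{0,3j}=\mathcal{L}_3^{\,j-2}\,G_6\ \ (j\geqslant2).
\]
Each right-hand side is a polynomial by the previous paragraph, and applying $L_k\varphi^*P=\varphi^*\mathcal{L}_kP$ the appropriate number of times, together with $\partial_{u_1}\wp_{i,3j}=\wp_{i+1,3j}$ and $\partial_{u_3}\wp_{i,3j}=\wp_{i,3(j+1)}$, yields $\varphi^*\widehat{\wp}_{i,3j}=\wp_{i,3j}$. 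A short combinatorial check shows that these three families are disjoint and exhaust exactly $\{(i,j):i+j\geqslant2\}$, so nothing is missed; since only existence is claimed, I need not worry that a given $(i,j)$ might also be reachable along other paths.

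The main obstacle is the row $i=0$. Because $\partial_{u_1}$ strictly raises the first index and $\partial_{u_3}$ leaves it fixed, no base point with $i\geqslant1$ can ever reach a function with $i=0$; moreover $\wp_{0,6}$ is not the $u_1$-derivative of any Abelian function, as that would force the nonexistent $\wp_{-1,6}$. Hence the $i=0$ family cannot be generated from $x_2$ and $z_4$ alone, and one genuinely needs a third, independent base polynomial for it. This is precisely the role of the fundamental relation $\wp_{0,6}=3\wp_{2,0}\wp_{1,3}-\tfrac{1}{2}\wp_{3,3}$, which expresses the non-derivative generator $\wp_{0,6}$ as the pullback of the explicit polynomial $G_6$. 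Thus the real content of the corollary is already contained in the identification $\wp_{0,6}=\varphi^*G_6$ and in the fact that $\mathcal{L}_1$ and $\mathcal{L}_3$ preserve $\mathbb{C}[X,Z]$; the remainder is bookkeeping with the intertwining relation.
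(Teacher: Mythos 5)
Your proof is correct and follows essentially the route the paper intends: the corollary rests on the facts that $\mathcal{L}_1$ and $\mathcal{L}_3$ are polynomial derivations of $\mathbb{C}[X,Z]$ intertwined via $\varphi^*$ with $\partial_{u_1}$ and $\partial_{u_3}$, together with the identifications $\wp_{2,0}=\varphi^*x_2$, $\wp_{1,3}=\varphi^*z_4$ and the fundamental relation giving $\wp_{0,6}=\varphi^*G_6$ (the paper's Appendix~B phrases this as every $\wp_{i,3j}$ being a differential polynomial in $\wp_{2,0},\wp_{1,3}$). Your explicit split into the three families $i\geqslant 2$, $i=1$, $i=0$ is just a clean way of making the paper's implicit bookkeeping precise, and your observation that the $i=0$ row genuinely requires the third base polynomial $G_6$ is accurate.
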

Using this corollary and the description of commutation relations in the Lie algebra $\Der(\mathcal{F}_2)$
(see Appendix B, Theorem \ref{t-26}), we obtain:

\begin{thm}\label{t-13}
There is a polynomial Lie algebra with generators $\mathcal{L}_k,\;k=0,1,2,3,4,6$,
and commutation relations over the ring of polynomials $\mathbb{C}[X,Z]$:
\begin{gather*}
[\mathcal{L}_0,\mathcal{L}_k]=k\mathcal{L}_k, \qquad k=1,2,3,4,6; \\
[\mathcal{L}_1,\mathcal{L}_2]=x_2 \mathcal{L}_1 - \mathcal{L}_3; \quad [\mathcal{L}_1,\mathcal{L}_3]=0;\quad [\mathcal{L}_1,\mathcal{L}_4]=z_4\mathcal{L}_1+x_2\mathcal{L}_3;\quad [\mathcal{L}_1,\mathcal{L}_6]=z_4\mathcal{L}_3;\\
[\mathcal{L}_2,\mathcal{L}_4]=2\mathcal{L}_6+\frac{1}{2}x_3\mathcal{L}_3-
\frac{8}{5}\widehat\lambda_4\mathcal{L}_2-\frac{1}{2}z_5\mathcal{L}_1+
\frac{8}{5}\widehat\lambda_6 \mathcal{L}_0;\quad
[\mathcal{L}_2,\mathcal{L}_3]= - \left(z_4 + \frac{4}{5}\widehat\lambda_4\right)\mathcal{L}_1;\\
[\mathcal{L}_2,\mathcal{L}_6]=-\frac{4}{5}\widehat\lambda_4\mathcal{L}_4 +\frac{1}{2}z_5\mathcal{L}_3-\frac{1}{2} G_6' \mathcal{L}_1+
\frac{4}{5}\widehat\lambda_8 \mathcal{L}_0;\\
[\mathcal{L}_3,\mathcal{L}_4]=(z_4-\widehat\lambda_4)\mathcal{L}_3+\Big(G_6 + \frac{6}{5}\widehat\lambda_6\Big)\mathcal{L}_1;\quad
[\mathcal{L}_3,\mathcal{L}_6]= G_6 \mathcal{L}_3+\frac{3}{5}\widehat\lambda_8\mathcal{L}_1;\\
[\mathcal{L}_4,\mathcal{L}_6]=2\widehat\lambda_4 \mathcal{L}_6 -\frac{6}{5}\widehat\lambda_6 \mathcal{L}_4+\frac{1}{2} G_6' \mathcal{L}_3+ \frac{6}{5}\widehat\lambda_8 \mathcal{L}_2-
\frac{1}{2}\dot{G}_6\mathcal{L}_1-2\widehat\lambda_{10} \mathcal{L}_0.
\end{gather*}
Here the polynomials $\widehat\lambda_k,\;k=4,6,8,10$, are defined by formulas {\rm(\ref{3})--(\ref{6})},
and the polynomials $G_6,\, G_6',\, \dot{G}_6 $ by formulas {\rm(\ref{8}),\,(\ref{9})}, and {\rm(\ref{12})}.
\end{thm}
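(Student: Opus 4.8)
The plan is to transport the commutation relations from the Lie algebra $\Der(\mathcal{F}_2)$, where they are supplied by Theorem \ref{t-26}, to the polynomial ring $\mathbb{C}[X,Z]$ by means of the uniformizing map $\varphi$, and then to identify the resulting coefficients as honest polynomials. The first step is to promote the defining intertwining $L_k\varphi^* = \varphi^*\mathcal{L}_k$ to the level of commutators. Applying it twice, for any polynomial $P$ one obtains $\varphi^*\mathcal{L}_i\mathcal{L}_j P = L_i L_j \varphi^* P$, and hence
\[
\varphi^*[\mathcal{L}_i,\mathcal{L}_j]P = [L_i,L_j]\varphi^* P.
\]
Because $\varphi$ is a diffeomorphism, $\varphi^*$ is injective, so a relation among the $\mathcal{L}_k$ holds exactly when the corresponding relation among the $L_k$ holds, with coefficients matched under $\varphi^*$.

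Second, I would invoke Theorem \ref{t-26}: since $L_0,\ldots,L_6$ form a basis of the rank-$6$ module $\Der(\mathcal{F}_2)$, each bracket can be written as $[L_i,L_j]=\sum_k a_{ij}^k L_k$ with $a_{ij}^k\in\mathcal{F}_2$, and these coefficients are explicit hyperelliptic functions, polynomial in the parameters $\lambda$ and in the generators $\wp_{i,3j}$. The crucial point is to check that every $a_{ij}^k$ is the pullback of a polynomial: by the Corollary on the polynomials $\widehat{\wp}_{i,3j}$, each $\wp_{i,3j}$ with $i+j\geqslant 2$ equals $\varphi^*\widehat{\wp}_{i,3j}$, and by formulas (\ref{3})--(\ref{6}) each $\lambda_k$ pulls back to the polynomial $\widehat\lambda_k$. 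Thus $a_{ij}^k=\varphi^*(c_{ij}^k)$ for suitable $c_{ij}^k\in\mathbb{C}[X,Z]$, and applying $\varphi^*$ to $[\mathcal{L}_i,\mathcal{L}_j]=\sum_k c_{ij}^k\mathcal{L}_k$ reproduces exactly the relation of the previous step; injectivity of $\varphi^*$ then yields the displayed polynomial identities. Since each $\mathcal{L}_k$ is a genuine derivation, the Jacobi identity is automatic, so closure of the bracket is all that is required to obtain a polynomial Lie algebra in the sense of \cite{BL-02}.

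The grading is the main computational lever. With $\deg\mathcal{L}_k=k$, forced by $[\mathcal{L}_0,\mathcal{L}_k]=k\mathcal{L}_k$, the bracket $[\mathcal{L}_i,\mathcal{L}_j]$ is homogeneous of degree $i+j$, so each coefficient $c_{ij}^k$ must be homogeneous of degree $i+j-k$. This pins down the shape of every term: for instance in $[\mathcal{L}_2,\mathcal{L}_4]$ the admissible coefficients have degrees $0,3,4,5,6$, matching $2\mathcal{L}_6$, $\tfrac{1}{2} x_3\mathcal{L}_3$, $-\tfrac{8}{5}\widehat\lambda_4\mathcal{L}_2$, $-\tfrac{1}{2} z_5\mathcal{L}_1$, $\tfrac{8}{5}\widehat\lambda_6\mathcal{L}_0$. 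The combinations $G_6,G_6',\dot{G}_6$ of (\ref{8}),\,(\ref{9}),\,(\ref{12}) appear precisely because they are the polynomial representatives of $\wp_{0,6}$ and its $u_1$-derivatives, via the fundamental relation $\wp_{0,6}=3\wp_2\wp_4-\tfrac{1}{2}\wp_{3,3}$ recorded above.

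The hard part will be the second step, namely producing the exact polynomial coefficients rather than merely knowing they exist. Theorem \ref{t-26} supplies the brackets in the language of $\lambda$ and $\wp$-functions, but converting these into polynomials on $\mathbb{C}^6$ requires recognizing each coefficient function as a pullback; for entries such as $\wp_{0,6}$ this is not a monomial substitution but rests on the fundamental algebraic relations among the $\wp_{i,3j}$. Carrying out this translation while keeping the grading consistent, and verifying that the fractional coefficients $\tfrac{8}{5},\tfrac{6}{5},\tfrac{3}{5}$ in the $\widehat\lambda$-terms emerge correctly, is the delicate heart of the computation; the appearance of $\widehat\lambda_4,\ldots,\widehat\lambda_{10}$ and of $G_6,G_6',\dot{G}_6$ in place of elementary monomials is the visible trace of this.
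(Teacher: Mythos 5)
Your proposal is correct and follows essentially the same route as the paper: the paper likewise obtains the relations by transporting the commutation relations of Theorem \ref{t-26} for the operators $L_k$ through the intertwining $L_k\varphi^*=\varphi^*\mathcal{L}_k$, identifying the coefficient functions as pullbacks of polynomials via the corollary on $\widehat{\wp}_{i,3j}$ and formulas (\ref{3})--(\ref{6}). The only ingredient the paper makes explicit that you leave implicit is the computation, from Theorem \ref{t-25}, of the values $\mathcal{L}_2x_2=P_4$, $\mathcal{L}_4x_2=F_6$, $\mathcal{L}_6x_2=P_8$ on the generators, which completes the description of the polynomial Lie algebra's action on $\mathbb{C}[X,Z]$.
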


\begin{proof}
The commutation relations for the operators $\mathcal{L}_k$ follow from the 
commutation relations for the operators $L_k$ (see Appendix B)
and the description of systems $S_1$ and $S_3$ obtained above.
The actions of the operators $\mathcal{L}_k$ on the generators
of the polynomial ring are described by dynamical systems $S_k,\; k=0,1,2,3,4,6$.
Thus, to finish the proof it is sufficient to describe the action of the operators
$\mathcal{L}_2, \mathcal{L}_4$, and $\mathcal{L}_6$ on the generators $x_2$ and $z_4$.

Let us introduce the polynomials
\begin{align*}
P_4 &= {1 \over 2} x_4 - x_2^2 + 2 z_4 + {3 \over 5} \widehat{\lambda}_4 = \frac{1}{5}(4x_4 - 14x_2^2 + 4 z_4),\\
F_6 &= z_6 - 2 x_2 z_4 + {2 \over 5} \widehat{\lambda}_6 = \frac{1}{10}(12z_6 - 28x_2 z_4 + 8x_2^3 - 2x_2 x_4 + x_3^2),\\
P_8 &= {1 \over 2} G_6'' - z_4^2 + {1 \over 5} \widehat{\lambda}_8 = \frac{1}{10}(-6x_2 z_6 + 4x_4 z_4 - 8z_4^2 + 8x_2^2 z_4 + x_3 z_5).
\end{align*}

Using Theorem \ref{t-25} (see Appendix B) in terms of this polynomials we obtain:
\[
 \mathcal{L}_0 x_2 = 2 x_2, \quad \mathcal{L}_1 x_2 = x_3, \quad
 \mathcal{L}_2 x_2 = P_4 ,\quad
 \mathcal{L}_3 x_2 = z_5, \quad \mathcal{L}_4 x_2 = F_6 ,\quad
 \mathcal{L}_6 x_2 = P_8.
\]
\end{proof}

We also introduce polynomials
\begin{align*}
P_6 &= \frac{2}{5}(6 x_2^3 - x_2 x_4 + 9 x_2 z_4),\\
F_8 &= \frac{1}{10}(-24 x_2^4 + 6 x_2^2 x_4 - 76 x_2^2 z_4 - 3 x_2 x_3^2 - 6 x_2 z_6 - 5 x_3 z_5 + 20 x_4 z_4 - 40 z_4^2),\\
F_{10} &= \frac{1}{20}(-16 x_2 x_3z_5 + 10x_3^2 z_4 - 4x_2 x_4 z_4 - 20z_5^2 + 16x_2^2z_6 - 152x_2 z_4^2 - 48x_2^3 z_4 + 40z_4 z_6).
\end{align*}

In the following theorem we will need polynomials with two indices
$P_{4,1}, \ldots, F_{10,2}$. By context it will be clear how to get these polynomials
from polynomials with one index $P_4, \ldots, F_{10}$ 
using commutation formulas for the fields $\mathcal{L}_0, \ldots, \mathcal{L}_6$ (see Theorem \ref{t-13}).

{\bf Example.} By definition
\[
P_{4,1} = \mathcal{L}_2x_3 = \mathcal{L}_2\mathcal{L}_1x_2.
\]
We have $\mathcal{L}_2\mathcal{L}_1=\mathcal{L}_1\mathcal{L}_2-x_2\mathcal{L}_1+\mathcal{L}_3$. Therefore
\[
P_{4,1} = P_4'-x_2x_3+z_5 = 3x_2x_3+5z_5.
\]

In terms of polynomials that were introduced above we have:
\begin{thm}\label{t-14}
The fields $\mathcal{L}_0, \mathcal{L}_1, \mathcal{L}_2, \mathcal{L}_3, \mathcal{L}_4, \mathcal{L}_6$ are defined by the
matrix
\[
 \mathcal{T}_2 = \begin{pmatrix}
                  2 x_2 & 3 x_3 & 4 x_4 & 4 z_4 & 5 z_5 & 6 z_6\\
                  x_3   & x_4   &   P_5 &   z_5 &   z_6 &   P_7\\
                  P_4   & P_{4,1}  & P_{4,2} &   P_6 & P_{6,1}  & P_{6,2}\\
                  z_5   & z_6   & P_7   &  G_6' & G_6'' & G_6'''\\
                  F_6   & F_{6,1}  & F_{6,2} &  F_8  & F_{8,1}  & F_{8,2}\\
                  P_8   & P_{8,1}  & P_{8,2} & F_{10} & F_{10,1} & F_{10,2}
                 \end{pmatrix}.
\]
\end{thm}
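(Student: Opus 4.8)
The plan is to use that each $\mathcal{L}_k$ is a derivation of the ring $\mathbb{C}[X,Z]$, and hence is completely determined by its values on the six generators $x_2,x_3,x_4,z_4,z_5,z_6$. These six values are precisely the entries of the $k$-th row of $\mathcal{T}_2$, so the assertion reduces to checking, entry by entry, that the $(k,j)$-entry of $\mathcal{T}_2$ equals $\mathcal{L}_k$ applied to the $j$-th coordinate.

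Three of the rows are already in hand. The row of $\mathcal{L}_0$ is the Euler field, i.e. system $S_0$; the row of $\mathcal{L}_1$ is system $S_1$ of Theorem \ref{t-9}. The row of $\mathcal{L}_3$ follows from system $S_3$ together with $[\mathcal{L}_1,\mathcal{L}_3]=0$: since $x_3=\mathcal{L}_1x_2$, $x_4=\mathcal{L}_1^2x_2$, $z_5=\mathcal{L}_1z_4$ and $z_6=\mathcal{L}_1^2z_4$, commuting $\mathcal{L}_3$ past $\mathcal{L}_1$ gives $\mathcal{L}_3x_3=z_6$, $\mathcal{L}_3x_4=P_7$, and $\mathcal{L}_3z_4=G_6'$, $\mathcal{L}_3z_5=G_6''$, $\mathcal{L}_3z_6=G_6'''$, in agreement with \eqref{9}--\eqref{11}.

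For the rows of $\mathcal{L}_2,\mathcal{L}_4,\mathcal{L}_6$ I would first record the seed entries, namely the action of each operator on the two module generators $x_2$ and $z_4$. These are furnished by Theorem \ref{t-25} of Appendix B (as already invoked in the proof of Theorem \ref{t-13}): $\mathcal{L}_2x_2=P_4$, $\mathcal{L}_4x_2=F_6$, $\mathcal{L}_6x_2=P_8$ and $\mathcal{L}_2z_4=P_6$, $\mathcal{L}_4z_4=F_8$, $\mathcal{L}_6z_4=F_{10}$. The four remaining entries of each of these rows are then obtained by the reduction illustrated in the Example preceding the theorem: since the non-seed coordinates are $\mathcal{L}_1$- and $\mathcal{L}_1^2$-derivatives of $x_2$ and $z_4$, one writes for instance
\[
\mathcal{L}_kx_3=\mathcal{L}_k\mathcal{L}_1x_2=\mathcal{L}_1(\mathcal{L}_kx_2)-[\mathcal{L}_1,\mathcal{L}_k]x_2,
\]
and expands the bracket $[\mathcal{L}_1,\mathcal{L}_k]$ by Theorem \ref{t-13}. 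In this way every two-index polynomial $P_{4,1},\dots,F_{10,2}$ is produced from the seed data by $\mathcal{L}_1$-differentiation together with applications of the lower operators $\mathcal{L}_0,\mathcal{L}_1,\mathcal{L}_3$; this is exactly the prescription by which these polynomials were declared to be defined.

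The main labor, and the only real obstacle, is the bookkeeping: there are twelve two-index entries, each demanding one or two commutator expansions followed by simplification into the stated homogeneous polynomial. A subtler point is internal consistency, since several entries are reachable along more than one path (e.g. $\mathcal{L}_2x_4=\mathcal{L}_2\mathcal{L}_1^2x_2$ may be built by commuting $\mathcal{L}_2$ past $\mathcal{L}_1$ twice, or by applying $\mathcal{L}_1$ to the entry $P_{4,1}$ and correcting), and all routes must yield the same polynomial. This coherence is guaranteed because the $\mathcal{L}_k$ form a genuine Lie algebra satisfying the Jacobi identity (Theorem \ref{t-13}), so no inconsistency can arise; in practice I would verify one representative entry as a check on the structure constants. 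Finally, the grading provides a strong safeguard: since $\deg\mathcal{L}_k=k$, the $(k,j)$-entry must be homogeneous of degree $k$ plus the degree of the $j$-th coordinate, which pins down the admissible monomials and confirms the displayed expressions.
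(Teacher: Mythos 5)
Your proposal is correct and follows essentially the same route as the paper: the rows for $\mathcal{L}_0,\mathcal{L}_1,\mathcal{L}_3$ come from the systems $S_0$, $S_1$, $S_3$, the seed entries $\mathcal{L}_kx_2$ and $\mathcal{L}_kz_4$ for $k=2,4,6$ come from Theorem \ref{t-25} (as in the proof of Theorem \ref{t-13}), and the remaining two-index entries are generated by commuting past $\mathcal{L}_1$ exactly as in the paper's worked Example $P_{4,1}=\mathcal{L}_2\mathcal{L}_1x_2=P_4'-x_2x_3+z_5$. The paper leaves this as an implicit computation, so your explicit organization (including the remark on path-independence via the Jacobi identity and the grading check) is a faithful, slightly more detailed rendering of the intended argument.
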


\begin{cor} \label{cor15}
The polynomial vector fields
\begin{align*}
\mathcal{L}_1 &= x_3\frac{\partial}{\partial x_2}+ x_4\frac{\partial}{\partial x_3}+ P_5\frac{\partial}{\partial x_4}+ z_5\frac{\partial}{\partial z_4}+ z_6\frac{\partial}{\partial z_5}+ P_7\frac{\partial}{\partial z_6},\\
\mathcal{L}_3 &= z_5\frac{\partial}{\partial x_2}+ z_6\frac{\partial}{\partial x_3}+ P_7\frac{\partial}{\partial x_4}+ G_6'\frac{\partial}{\partial z_4}+ G_6''\frac{\partial}{\partial z_5}+ G_6'''\frac{\partial}{\partial z_6}
\end{align*}
commute and annihilate the polynomials $\widehat\lambda_k,\;k=4,6,8,10$.
\end{cor}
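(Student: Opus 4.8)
The plan is to treat the two assertions separately, deducing each from results already in place. For the commutation, the relation $[\mathcal{L}_1,\mathcal{L}_3]=0$ is precisely one of the entries in the list of Theorem \ref{t-13}. The explicit fields written in the corollary are assembled from the second and fourth rows of the matrix $\mathcal{T}_2$ of Theorem \ref{t-14}, and those rows record the actions of $\mathcal{L}_1$ and $\mathcal{L}_3$ on the coordinates $x_2,x_3,x_4,z_4,z_5,z_6$; hence they are the very operators occurring in Theorem \ref{t-13}, and the commutation is immediate. Alternatively one can verify it directly: $[\mathcal{L}_1,\mathcal{L}_3]$ is a first-order operator, so it suffices to check that its value on each of the six coordinates vanishes, a set of polynomial identities confirmed from the explicit entries $P_5,P_7,G_6',G_6'',G_6'''$ together with the relations \eqref{7} and \eqref{9}--\eqref{11}.

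For the annihilation I would argue geometrically rather than compute. By definition $\widehat\lambda_k=\pi_2^*\lambda_k$, and since $\pi_2\circ\varphi$ equals the bundle projection $\widetilde\pi\colon\widetilde U_2\to B_2$, the pullback $\varphi^*\widehat\lambda_k=\lambda_k$ is a function of the base parameters alone, constant along each Jacobian fibre. Because $L_1=\partial_{u_1}$ and $L_3=\partial_{u_3}$ differentiate only in the fibre directions, $L_1\lambda_k=L_3\lambda_k=0$. Applying the defining intertwining relation $L_j\varphi^*P=\varphi^*\mathcal{L}_jP$ with $P=\widehat\lambda_k$ and $j=1,3$ gives $\varphi^*(\mathcal{L}_1\widehat\lambda_k)=\varphi^*(\mathcal{L}_3\widehat\lambda_k)=0$. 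As $\varphi$ is a diffeomorphism onto the Zariski-dense open set $\widetilde W_2\subset\mathbb{C}^6$ and each $\mathcal{L}_j\widehat\lambda_k$ is a polynomial, the vanishing of its pullback forces it to vanish identically.

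I expect the main obstacle to be bookkeeping rather than anything conceptual: one must be certain that the entries $P_5=4(3x_2x_3+z_5)$ and $P_7=4(2x_2z_5+x_3z_4)$ of $\mathcal{L}_1$, and $G_6'=x_3z_4-x_2z_5$ and the higher derivatives of $G_6$ defining $\mathcal{L}_3$, are the correct ones carried over from Theorems \ref{t-9}--\ref{t-14}, so that the operators in the corollary genuinely coincide with those of Theorem \ref{t-13}. Should a self-contained verification avoiding the uniformization be preferred, the annihilation reduces to eight explicit polynomial identities; for instance $\mathcal{L}_1\widehat\lambda_4=\tfrac{1}{2}(P_5-12x_2x_3-4z_5)=0$ and $\mathcal{L}_3\widehat\lambda_4=\tfrac{1}{2}(P_7-12x_2z_5-4G_6')=0$ follow at once from \eqref{7} and \eqref{9}, and the six remaining cases for $\widehat\lambda_6,\widehat\lambda_8,\widehat\lambda_{10}$ are of the same routine character.
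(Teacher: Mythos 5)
Your proof is correct and follows the route the paper intends: the explicit fields are rows $2$ and $4$ of $\mathcal{T}_2$, the commutation is the relation $[\mathcal{L}_1,\mathcal{L}_3]=0$ of Theorem \ref{t-13} (ultimately from $[\partial_{u_1},\partial_{u_3}]=0$), and the annihilation of $\widehat\lambda_k$ follows because $\varphi^*\widehat\lambda_k=\lambda_k$ is constant along the Jacobian fibres while $L_1,L_3$ differentiate only in the fibre directions, the polynomial identity then extending from the Zariski-dense set $\widetilde W_2$ to all of $\mathbb{C}^6$. Your sample verifications of $\mathcal{L}_1\widehat\lambda_4=0$ and $\mathcal{L}_3\widehat\lambda_4=0$ check out against \eqref{7} and \eqref{9}.
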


Consider the space $\mathbb{C}^6$ with coordinates $x_k$, $k = 2, \ldots, 5$,
where $x_5 = x_2'''$, $\widehat\lambda_{2q}$, $q = 2, 3$.

\begin{cor}
There is the bundle $\pi: \mathbb{C}^6 \to \mathcal{B} \subset \mathbb{C}^4$,
 $\pi(x_k, \lambda_{2q}) = (\widehat\lambda_4, \widehat\lambda_6, \widehat\lambda_8, \widehat\lambda_{10})$, where
\begin{align*}
\widehat\lambda_8 = & - {1 \over 16} x_4^2  + {1 \over 8} x_3 x_5 - {5 \over 4} x_2 x_3^2 + {5 \over 4} x_2^4 + {1 \over 2} \widehat\lambda_4 x_2^2 - \widehat\lambda_6 x_2 + {1 \over 4} \widehat\lambda_4^2, \\
\widehat\lambda_{10} =
& {1 \over 64} x_5^2 - {3 \over 8} x_2 x_3 x_5 - {1 \over 8} x_2 x_4^2 + {1 \over 16} x_3^2 x_4 + {5 \over 4} x_2^3 x_4 + {15 \over 8} x_2^2 x_3^2 - 3 x_2^5
- \\ & - {1 \over 8} \widehat\lambda_4 (x_3^2 - 2 x_2 x_4 + 8 x_2^3) - {1 \over 4} \widehat\lambda_6 (x_4 - 6 x_2^2) + {1 \over 2} \widehat\lambda_4 \widehat\lambda_6.
\end{align*}
Along the fiber over a point $\widehat\lambda \in \mathcal{B}$ there acts the operator
\[
 L_1 = x_3 {\partial \over \partial x_2} + x_4 {\partial \over \partial x_3} + x_5 {\partial \over \partial x_4}
 + ( 10 x_3^2 + 20 x_2 x_4 -40 x_2^3 - 8 \widehat\lambda_4 x_2 + 8 \widehat\lambda_6) {\partial \over \partial x_5}.
\]

\end{cor}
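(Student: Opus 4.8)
The plan is to read the stated corollary as a change of coordinates on $\mathbb{C}^6$ applied to the dynamical system $S_1$ of Theorem~\ref{t-9}, together with the transport of the field $\mathcal{L}_1$ of Corollary~\ref{cor15}. First I would set $x_5 := \mathcal{L}_1 x_4 = P_5 = 4(3x_2x_3 + z_5)$ and observe that the assignment
\[
(x_2,x_3,x_4,z_4,z_5,z_6) \longmapsto (x_2,x_3,x_4,x_5,\widehat\lambda_4,\widehat\lambda_6)
\]
is a polynomial automorphism of $\mathbb{C}^6$. Indeed $x_2,x_3,x_4$ are unchanged, while by \eqref{7}, \eqref{3} and \eqref{4} the functions $x_5$, $\widehat\lambda_4$, $\widehat\lambda_6$ are affine in $z_5$, $z_4$, $z_6$ respectively with invertible constant leading coefficients. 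Solving gives
\[
z_4 = \tfrac14(x_4 - 6x_2^2) - \tfrac12\widehat\lambda_4, \quad z_5 = \tfrac14 x_5 - 3x_2x_3, \quad z_6 = 2x_2x_4 - 10x_2^3 - 2x_2\widehat\lambda_4 - \tfrac12 x_3^2 + 2\widehat\lambda_6,
\]
which exhibits the old coordinates as polynomials in the new ones, so that $(x_2,x_3,x_4,x_5,\widehat\lambda_4,\widehat\lambda_6)$ are global coordinates on $\mathbb{C}^6$.

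Next I would establish the two displayed formulas for $\widehat\lambda_8$ and $\widehat\lambda_{10}$ by substituting these expressions for $z_4,z_5,z_6$ into the defining relations \eqref{5} and \eqref{6}. This is a direct computation in $\mathbb{C}[x_2,x_3,x_4,x_5,\widehat\lambda_4,\widehat\lambda_6]$, yielding homogeneous polynomials of weights $8$ and $10$, which I expect to reproduce the stated expressions. The map $\pi$ is then the polynomial map with components $(\widehat\lambda_4,\widehat\lambda_6,\widehat\lambda_8,\widehat\lambda_{10})$, and $\mathcal{B}\subset\mathbb{C}^4$ is its image; its generic fibers are two-dimensional, since over a fixed $(\widehat\lambda_4,\widehat\lambda_6)$ the two equations $\widehat\lambda_8 = \mathrm{const}$, $\widehat\lambda_{10} = \mathrm{const}$ cut out a surface in the space of $(x_2,x_3,x_4,x_5)$.

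Finally I would transport $\mathcal{L}_1$ to the new coordinates. From $\mathcal{L}_1 x_2 = x_3$, $\mathcal{L}_1 x_3 = x_4$, $\mathcal{L}_1 x_4 = x_5$ and from Corollary~\ref{cor15}, which gives $\mathcal{L}_1\widehat\lambda_4 = \mathcal{L}_1\widehat\lambda_6 = 0$, the transported field carries no $\partial_{\widehat\lambda_4}$ or $\partial_{\widehat\lambda_6}$ component, and the only remaining coefficient is
\[
\mathcal{L}_1 x_5 = \mathcal{L}_1 P_5 = 12 x_3^2 + 12 x_2 x_4 + 4 z_6.
\]
Substituting the expression for $z_6$ obtained above turns this into $10 x_3^2 + 20 x_2 x_4 - 40 x_2^3 - 8\widehat\lambda_4 x_2 + 8\widehat\lambda_6$, which is precisely the coefficient of $\partial_{x_5}$ in the stated operator $L_1$. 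Since Corollary~\ref{cor15} further gives $\mathcal{L}_1\widehat\lambda_8 = \mathcal{L}_1\widehat\lambda_{10} = 0$, the field $L_1$ annihilates all four components of $\pi$, hence is tangent to every fiber and acts along the fibers, which completes the argument.

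The main obstacle is purely computational: the faithful reproduction of the weight-$8$ and weight-$10$ polynomials $\widehat\lambda_8$ and $\widehat\lambda_{10}$ after the substitution into \eqref{5}–\eqref{6}. Everything else — the invertibility of the coordinate change and the transport of $\mathcal{L}_1$ — is structural and follows directly from the already-established formulas \eqref{3}–\eqref{6}, \eqref{7}, and Corollary~\ref{cor15}.
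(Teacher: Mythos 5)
Your proposal is correct and follows essentially the route the paper intends: the corollary is an unproved restatement of the substitution carried out explicitly in the proof of the theorem that follows it, where $z_4$, $z_5$, $z_6$ are eliminated via \eqref{3}, \eqref{4}, \eqref{7} and the resulting expressions are fed into \eqref{5}, \eqref{6} and into $\mathcal{L}_1 x_5 = 12x_3^2+12x_2x_4+4z_6$. Your verification of the coordinate change's invertibility and of $\mathcal{L}_1\widehat\lambda_k=0$ via Corollary~\ref{cor15} matches the paper's logic, and the computations check out.
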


{\bf Examples.}
\[
4 (\mathcal{L}_3+3x_2\mathcal{L}_1)x_2 = P_5, \qquad 4 (2x_2\mathcal{L}_3+z_4\mathcal{L}_1)x_2 = P_7.
\]

Note that all matrix elements $(\mathcal{T}_2)_{i,j},\; 1\leqslant i,j \leqslant 6$, of the matrix $\mathcal{T}_2$
are homogeneous polynomials in $(X,Z)$, with grading equal to $i+j$.

\begin{cor}
The polynomial $\det \mathcal{T}_2$ in $(X,Z)$ is homogeneous of degree $40$.
\end{cor}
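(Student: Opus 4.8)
The plan is to exploit the grading structure already recorded in the remark immediately preceding the statement. We are told that each matrix entry $(\mathcal{T}_2)_{i,j}$ is a homogeneous polynomial in $(X,Z)$ of grading $i+j$. The determinant is an alternating sum over permutations $\pi \in S_6$ of products $\prod_{i=1}^{6}(\mathcal{T}_2)_{i,\pi(i)}$, so I would first observe that each such product is itself homogeneous, and compute its degree directly from the additivity of the grading.

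For a fixed permutation $\pi$, the degree of the corresponding product term is
\[
\sum_{i=1}^{6}\deg (\mathcal{T}_2)_{i,\pi(i)} = \sum_{i=1}^{6}\bigl(i+\pi(i)\bigr) = \sum_{i=1}^{6} i + \sum_{i=1}^{6}\pi(i).
\]
Since $\pi$ is a bijection of $\{1,\dots,6\}$ onto itself, $\sum_{i=1}^{6}\pi(i)=\sum_{i=1}^{6} i = 21$, and hence every product term has degree $21+21=42$. The key point is that this value is \emph{independent of the permutation} $\pi$: each of the $6!$ terms in the Leibniz expansion of $\det\mathcal{T}_2$ is homogeneous of the same degree $42$. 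Therefore the determinant, being a sum of homogeneous polynomials all of the same degree, is itself homogeneous of degree $42$ (or identically zero).

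**The discrepancy and its resolution.**

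The naive count gives $42$, whereas the statement asserts degree $40$, so the main point requiring care is this gap of $2$. The resolution is that the convention $\deg(\mathcal{T}_2)_{i,j}=i+j$ used in the remark is an \emph{index}-grading that differs from the intrinsic polynomial grading $\deg x_i = i$, $\deg z_j = j$ by a uniform shift. Concretely, in the first row the entry $(\mathcal{T}_2)_{1,1}=2x_2$ has polynomial degree $2$ but index-grading $1+1=2$, which matches; however the row labels correspond to the operators $\mathcal{L}_0,\mathcal{L}_1,\mathcal{L}_2,\mathcal{L}_3,\mathcal{L}_4,\mathcal{L}_6$, whose indices $0,1,2,3,4,6$ sum to $16$ rather than to $\sum i = 21$. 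I would therefore recompute the true polynomial degree of each product term as $\sum_{i}\bigl(k_i + c_{\pi(i)}\bigr)$, where $k_i\in\{0,1,2,3,4,6\}$ is the operator index attached to row $i$ and $c_j\in\{2,3,4,4,5,6\}$ is the polynomial degree of the $j$-th generator; summing gives $\sum k_i + \sum c_j = 16 + 24 = 40$, again independent of $\pi$. This is the step I expect to be the genuine obstacle: one must identify which grading the entries actually carry and verify that the column degrees are $(2,3,4,4,5,6)$ and the row degrees are $(0,1,2,3,4,6)$, so that every Leibniz term has total polynomial degree $40$. Finally, I would confirm that $\det\mathcal{T}_2$ is not identically zero — for instance because the fields $\mathcal{L}_k$ are linearly independent outside $\widetilde\Delta$, as stated in the abstract — so the degree is genuinely $40$ rather than vacuously attained.
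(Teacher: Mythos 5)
Your proof is correct and follows the same route the paper intends: each entry of $\mathcal{T}_2$ is homogeneous, and in the Leibniz expansion every permutation term has the same total degree, equal to the sum of the row weights plus the sum of the column weights. You are also right that the preceding remark's formula $\deg(\mathcal{T}_2)_{i,j}=i+j$ cannot be read literally with $1\leqslant i,j\leqslant 6$ (that would give $42$, and already the entries $4z_4$ and $P_8$ violate it); the true entry degrees are $k_i+c_j$ with row weights $\{0,1,2,3,4,6\}$ (the indices of $\mathcal{L}_0,\mathcal{L}_1,\mathcal{L}_2,\mathcal{L}_3,\mathcal{L}_4,\mathcal{L}_6$) and column weights $\{2,3,4,4,5,6\}$ (the degrees of $x_2,x_3,x_4,z_4,z_5,z_6$), giving $16+24=40$ exactly as you computed, and your appeal to the linear independence of the fields outside $\widetilde\Delta$ correctly rules out $\det\mathcal{T}_2\equiv 0$.
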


\begin{cor}
The polynomial vector fields $\mathcal{L}_0,\mathcal{L}_2,\mathcal{L}_4,\mathcal{L}_6$ on $\mathbb{C}^6$
under the map $\pi_2'' \colon \mathbb{C}^6 \to \mathbb{C}^7$ determine polynomial vector fields
$\widehat{\mathcal{L}}_0,\widehat{\mathcal{L}}_2,\widehat{\mathcal{L}}_4,\widehat{\mathcal{L}}_6$ on $\mathbb{C}^7$
such that
\[
\mathcal{L}_k(\pi_2'')^*P=(\pi_2'')^*\widehat{\mathcal{L}}_kP,\; k=0,2,4,6.
\]
\end{cor}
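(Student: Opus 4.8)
The plan is to read the statement as the algebraic expression of the fact that $\pi_2''$ is, up to its branch locus, the quotient of $\mathbb{C}^6$ by the involution $\tau(x_2,x_3,x_4,z_4,z_5,z_6)=(x_2,-x_3,x_4,z_4,-z_5,z_6)$, and that precisely the even-indexed fields are $\tau$-invariant. I would therefore work inside the ring $\mathbb{C}[X,Z]$ and characterise the image of $(\pi_2'')^*$ as the subring of $\tau$-invariants.

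First I would note that $\pi_2''$ is constant on $\tau$-orbits: the involution changes only the signs of $x_3$ and $z_5$, and each of the seven components $x_2,\,x_4,\,z_4,\,x_3^2,\,z_6,\,x_3z_5,\,z_5^2$ of $\pi_2''$ is $\tau$-invariant, so $\pi_2''\circ\tau=\pi_2''$ and $\mathrm{im}\,(\pi_2'')^*$ lies in the subring $R^\tau\subset\mathbb{C}[X,Z]$ of $\tau$-invariant polynomials. Conversely, since $\tau$ acts by $-1$ on $(x_3,z_5)$ and trivially elsewhere, $R^\tau$ is generated as a ring by $x_2,x_4,z_4,z_6$ together with the quadratic monomials $x_3^2,\,x_3z_5,\,z_5^2$ in the two sign-changing variables (these satisfy only $(x_3z_5)^2=x_3^2z_5^2$, the defining relation $w_8^2=x_6z_{10}$ of the hypersurface $C^6$). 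Hence $\mathrm{im}\,(\pi_2'')^*=R^\tau$.

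The decisive observation is that $\tau$-parity coincides with the parity of the total grading: only the odd-degree coordinates $x_3$ (degree $3$) and $z_5$ (degree $5$) change sign, so a monomial is $\tau$-invariant exactly when its total degree is even, and thus $R^\tau$ is the span of the even-degree monomials. Now $[\mathcal{L}_0,\mathcal{L}_k]=k\mathcal{L}_k$ says that $\mathcal{L}_k$ raises the grading by $k$ (its action on a coordinate $v$ is homogeneous of degree $k+\deg v$, as reflected in the homogeneity of the entries of $\mathcal{T}_2$). For $k$ even this preserves degree-parity, hence preserves $R^\tau$, so $\mathcal{L}_k(\pi_2'')^*y\in R^\tau=\mathrm{im}\,(\pi_2'')^*$ for $k\in\{0,2,4,6\}$ and each coordinate $y$ of $\mathbb{C}^7$; for instance $\mathcal{L}_2(x_3^2)=2x_3(3x_2x_3+5z_5)=6x_2x_3^2+10x_3z_5=(\pi_2'')^*(6x_2x_6+10w_8)$.

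It then remains to produce the fields and check the intertwining identity. I would define $\widehat{\mathcal{L}}_k$ on the free polynomial ring $\mathbb{C}[\mathbb{C}^7]$ by choosing, for each coordinate $y$, a polynomial $\widehat{\mathcal{L}}_ky$ with $(\pi_2'')^*\widehat{\mathcal{L}}_ky=\mathcal{L}_k(\pi_2'')^*y$ (such a preimage exists by the previous step, unique only modulo $w_8^2-x_6z_{10}$), and extend $\widehat{\mathcal{L}}_k$ to all polynomials by the Leibniz rule. Finally, both $P\mapsto\mathcal{L}_k(\pi_2'')^*P$ and $P\mapsto(\pi_2'')^*\widehat{\mathcal{L}}_kP$ are $\mathbb{C}$-linear maps $\mathbb{C}[\mathbb{C}^7]\to\mathbb{C}[X,Z]$ satisfying the twisted Leibniz rule $D(PQ)=D(P)(\pi_2'')^*Q+(\pi_2'')^*P\,D(Q)$; since they agree on the generators $y$ by construction, they agree on every $P$, which is the required identity. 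The main obstacle is the single point in the third paragraph, namely that each even $\mathcal{L}_k$ maps $R^\tau$ into itself: the grading argument makes this immediate, whereas a direct verification would amount to reading off from $\mathcal{T}_2$ that $\mathcal{L}_k$ sends each of $x_2,x_4,z_4,z_6$ and each of $x_3^2,\,x_3z_5,\,z_5^2$ into the seven invariant generators, which is exactly the parity bookkeeping automated by the grading.
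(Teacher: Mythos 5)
Your proof is correct. The paper itself gives no argument for this corollary --- it only records the illustrative computation $\mathcal{L}_2x_3^2=6x_2x_3^2+10x_3z_5=(\pi_2'')^*(6x_2x_6+10w_8)$, leaving the reader to check the remaining entries of $\mathcal{T}_2$ directly, or to fall back on the geometric fact that the even-indexed fields are equivariant for the involution $\tau(x_2,x_3,x_4,z_4,z_5,z_6)=(x_2,-x_3,x_4,z_4,-z_5,z_6)$ and hence descend to the Kummer-variety bundle. What you add is the clean algebraic mechanism behind this: the image of $(\pi_2'')^*$ is exactly the $\tau$-invariant subring, $\tau$-parity of a monomial coincides with the parity of its total grading (since the sign-changing coordinates $x_3,z_5$ are precisely the odd-degree ones), and each $\mathcal{L}_k$ is homogeneous of degree $k$ (as the homogeneity of the entries of $\mathcal{T}_2$, or the relation $[\mathcal{L}_0,\mathcal{L}_k]=k\mathcal{L}_k$, shows), so for even $k$ it preserves the invariant subring. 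Your final step --- choosing lifts $\widehat{\mathcal{L}}_ky$ of $\mathcal{L}_k(\pi_2'')^*y$ on the seven coordinates, noting they are unique only modulo the ideal $(w_8^2-x_6z_{10})$, and then identifying the two sides of the intertwining formula as derivations along $(\pi_2'')^*$ that agree on generators --- is exactly the bookkeeping needed to turn the invariance statement into the asserted identity, and it buys a uniform argument in place of thirty-odd entry-by-entry verifications. No gaps.
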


{\bf Example.} We have
\[
\mathcal{L}_2x_3^2=2x_3P_{4,1}= 6 x_2 x_3^2 + 10 x_3 z_5.
\]
We obtain
\[
\widehat{\mathcal{L}}_2x_6= 6 x_2 x_6 + 10 w_8.
\]

\section{Differential equations on the generators of the differential field}
The KdV-hierarchy is an infinite system of differential equations
\[
U_{t_k}=\chi_k U,\; k=1,2,\ldots \,,
\]
on the function $U=U(t_1,t_2,\ldots)$. It is determined by the recursion
\[
\chi_{k+1}U = \mathcal{R}\chi_kU
\]
with initial conditions $\chi_1=\frac{\partial}{\partial t_1}$, where $\mathcal{R}$ is the Lenard operator
\[
\mathcal{R} = \frac{1}{4}\frac{\partial^2}{\partial t_1^2}-U-\frac{1}{2}U_{t_1}\left(\frac{\partial}{\partial t_1}\right)^{-1}\,.
\]
Set $t_1=u_1,\; t_2=u_3$. For brevity we use the notation $U_{t_1}=U'$ and $U_{t_3}=\dot{U}$.

Consider the function $x_2(u_1,u_3)=\wp_{2,0}(u_1,u_3)$. Set $U=2x_2$.
Note that for such a function $U$ the Lenard operator $\mathcal{R}$ is homogeneous of degree $2$ in our grading. According to Theorem \ref{t-9} we have
\[
x_2'''=x_4'=P_5=4(3x_2x_3+z_5).
\]
Since $x_3=x_2'$ and $z_5=\dot{x}_2$, we obtain the equation
\[
U'''=6UU'+4\dot{U}.
\]

\begin{thm}[see \cite{BEL-97-1, BEL-97-2, BEL-12}]
The function $U=2\wp_{2,0}(u_1,u_3;\lambda)$ satisfies the KdV hierarchy.
\end{thm}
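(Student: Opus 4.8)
The plan is to verify the hierarchy flow by flow through the Lenard recursion, reducing the infinitely many higher equations to the two-dimensional geometry of the fibre $J_\lambda$. The two lowest flows I would treat by direct computation. The equation $U_{t_1}=\chi_1 U$ holds tautologically, since $t_1=u_1$ and $\chi_1=\partial_{t_1}$ give $U_{t_1}=U'$. For the second flow I apply $\mathcal{R}$ to $\chi_1 U=U'$; since $\partial_{t_1}^{-1}U'=U$ the nonlocal term collapses and
\[
\chi_2 U=\mathcal{R}U'=\tfrac14 U'''-UU'-\tfrac12 U'\,\partial_{t_1}^{-1}U'=\tfrac14 U'''-\tfrac32 UU'.
\]
The relation $U'''=6UU'+4\dot U$, obtained above from Theorem \ref{t-9} via $x_2'''=x_4'=P_5$ together with $x_3=x_2'$ and $z_5=\dot x_2$, rearranges to $\dot U=\tfrac14 U'''-\tfrac32 UU'$. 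Comparing, $\chi_2 U=\dot U=U_{t_2}$ with $t_2=u_3$, so the second equation of the hierarchy is exactly the KdV equation already satisfied by $U$.

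For the higher flows $k\geqslant 3$ I would argue by induction that the recursion stays inside the ring of Abelian functions on $J_\lambda$. Each $\chi_k U$ is a differential polynomial in $U$ in the variable $u_1$, hence, after the uniformization $\varphi$, a polynomial in the coordinates $(X,Z)$, i.e. an Abelian function of $(u_1,u_3)$. The delicate point is the nonlocal term: one must check that $\chi_k U$ is a total $u_1$-derivative of an Abelian function, so that $\partial_{t_1}^{-1}(\chi_k U)$ is again single-valued and $\chi_{k+1}U$ remains in $\varphi^*\mathbb{C}[X,Z]$. This is where the sigma-function theory of \cite{BEL-97-1, BEL-97-2, BEL-12} and the closedness of the polynomial Lie algebra of Theorem \ref{t-13} enter, and it is the main obstacle: the integration step must be controlled at every order so that the recursion does not leave the finite-dimensional framework.

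Finally I would close the hierarchy using $\dim J_\lambda=2$. On the two-zone solution every KdV flow acts as a constant-coefficient translation of $J_\lambda$, and the translation-invariant fields form the two-dimensional space spanned by the commuting fields $\mathcal{L}_1=\partial_{u_1}$ and $\mathcal{L}_3=\partial_{u_3}$. Hence $\chi_k U=a_k U'+b_k\dot U$ for constants $a_k,b_k$ depending only on $\lambda$; the grading fixes these, since $\chi_k$ has weight $2k-1$ while $\mathcal{L}_1,\mathcal{L}_3$ have weights $1$ and $3$, forcing $\deg a_k=2k-2$ and $\deg b_k=2k-4$ as polynomials in $\lambda$ (so that, for instance, $\chi_3 U\propto\widehat\lambda_4\,U'$). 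Choosing the higher times so that $u_1$ and $u_3$ become the corresponding linear forms in $t_1,t_2,t_3,\ldots$ makes all equations $U_{t_k}=\chi_k U$ hold simultaneously, which is exactly the assertion that $U=2\wp_{2,0}$ is a $2$-zone solution of the full KdV hierarchy.
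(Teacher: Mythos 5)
Your treatment of the first two flows is correct and coincides with the paper's: $\chi_1U=U'$ is tautological, and $\chi_2U=\tfrac14(U'''-6UU')=\dot U$ follows from $x_2'''=P_5$ exactly as you say. The gap is in how you handle the flows $k\geqslant 3$. You yourself name the control of the nonlocal term $\partial_{t_1}^{-1}$ at every order as ``the main obstacle'' and leave it unresolved; and the principle you then invoke to close the hierarchy --- that every KdV flow acts on the two-zone solution as a constant translation of $J_\lambda$, so that $\chi_kU$ lies in the span of $U'$ and $\dot U$ --- is essentially the linearization of the KdV flows on the Jacobian, i.e.\ the content of the theorem being proved for finite-gap solutions. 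Invoking it here is circular. Moreover your grading argument only constrains $\chi_3U$ to be a multiple of $\lambda_4U'$; it does not determine the constant, and a nonzero constant would force a reinterpretation of the times that you only sketch.

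What you are missing is that the entire tail of the hierarchy collapses after one more application of $\mathcal{R}$, by a direct computation inside the dynamical system $S_1$. Since $\chi_2U=\dot U=2\dot x_2=2z_4'$, one has
\begin{equation*}
2\mathcal{R}\chi_2U
= z_4'''-8x_2z_5-4x_3z_4
= z_6'-P_7=0,
\end{equation*}
using $z_4'=z_5$, $z_5'=z_6$ and $z_6'=P_7=4(2x_2z_5+x_3z_4)$ from formula \eqref{7}. Hence $\chi_3U=0$, and the recursion $\chi_{k+1}U=\mathcal{R}\chi_kU$ gives $\chi_kU=0$ for all $k\geqslant 3$: the solution is stationary in all higher times, and the hierarchy is satisfied with no induction over orders, no control of the integration step, and no appeal to the geometry of $J_\lambda$. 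This one-line identity is the paper's argument and is the step your proposal needs.
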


\begin{proof}
We have
\[
\dot{U} =\chi_2U =\mathcal{R}\chi_1U=\mathcal{R}U'=\frac{1}{4}(U'''-6UU').
\]
Further, as $\dot{U}=2\dot{x}_2=2z_4'$, so
\[
\mathcal{R}\chi_2U=2\mathcal{R}z_4'=\frac{1}{2}z_4'''-(2x_2)(2z_4') -2x_2'z_4.
\]
Hence,
\[
2\mathcal{R}{\chi_2}U=z_6'-8x_2z_5-4x_3z_4.
\]
Since $z_6'=P_7=4(2x_2z_5+x_3z_4)$ (see formula \eqref{7}), we obtain that $\mathcal{R}{\chi_2}U=0$.
Thus, the function $U=2\wp_{2,0}(u_1,u_3;\lambda)$ satisfies the KdV equation $U_{t_2}=\chi_2 U$
and the stationary timewise for $t_3=u_5$ equation $\mathcal{R}{\chi_2}U=0$ of the KdV hierarchy.
\end{proof}

Using the formula $\dot{z}_4=3(x_2z_4)'-\frac{1}{2}z_4'''$ (see Theorem \ref{t-14}), we obtain:
\begin{thm}
The functions $V=2\wp_{1,3}(u_1,u_3;\lambda)$ and $U=2\wp_{2,0}(u_1,u_3;\lambda)$ give a solution of the system of equations
\begin{align}
V''' &= 3 (UV)' - 2 \dot{V},\nonumber \\
U''' &= 3 (U^2)' + 4 \dot{U}, \label{sist-13} \\
V' &= \dot{U}.\nonumber
\end{align}
\end{thm}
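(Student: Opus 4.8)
The plan is to reduce each of the three equations to a polynomial identity in the coordinates $x_2,x_3,x_4,z_4,z_5,z_6$. After the substitution $U=2x_2$ and $V=2z_4$, every prime (a $u_1$-derivative, i.e.\ $\mathcal{L}_1$) acting on a coordinate is read off from system $S_1$ (Theorem~\ref{t-9}), and every dot (a $u_3$-derivative, i.e.\ $\mathcal{L}_3$) from system $S_3$. First I would assemble the needed derivatives: $U'=2x_3$, $U''=2x_4$, $U'''=2x_4'=2P_5$, $V'=2z_5$, $V''=2z_6$, $V'''=2z_6'=2P_7$, together with $\dot U=2\dot x_2=2z_5$ and $\dot V=2\dot z_4=2(x_3z_4-x_2z_5)$, the last using $\dot z_4=G_6'$ from~\eqref{9}. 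Once these are in hand, the three equations become purely algebraic.

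For the third equation this is immediate: $V'=2z_5=\dot U$, which is exactly the compatibility $\partial_{u_1}z_4=\partial_{u_3}x_2$ recorded in the abstract and needs nothing beyond the two systems. For the second equation I would note that it has essentially already been established in the preceding theorem: $U'''=2P_5=24x_2x_3+8z_5$ matches $3(U^2)'+4\dot U=6UU'+4\dot U=24x_2x_3+8z_5$.

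The only step that genuinely couples both systems, and which I regard as the main (if still routine) point, is the first equation. The cleanest route is to invoke the identity $\dot z_4=3(x_2z_4)'-\tfrac12 z_4'''$ from Theorem~\ref{t-14}; multiplying by $2$ and using $U=2x_2$, $V=2z_4$ turns it into $\dot V=\tfrac32(UV)'-\tfrac12 V'''$, which rearranges at once to $V'''=3(UV)'-2\dot V$. Alternatively one may check it by direct substitution: $V'''=2P_7=16x_2z_5+8x_3z_4$, while $3(UV)'-2\dot V=3\bigl(2x_3\cdot2z_4+2x_2\cdot2z_5\bigr)-2\bigl(2x_3z_4-2x_2z_5\bigr)=8x_3z_4+16x_2z_5$, the same polynomial.

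The real content is thus the formula for $\dot z_4$, which itself comes from combining the $S_3$-value $\dot z_4=x_3z_4-x_2z_5$ with the $S_1$-values $z_4'''=z_6'=P_7$ and $(x_2z_4)'=x_3z_4+x_2z_5$; after these substitutions no obstacle remains and the verification is mechanical.
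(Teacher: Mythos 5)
Your proposal is correct and follows essentially the same route as the paper: the paper derives the second equation from $x_2'''=P_5$ just before its KdV-hierarchy theorem, and obtains the first equation precisely from the identity $\dot z_4=3(x_2z_4)'-\tfrac12 z_4'''$ that you invoke, with the third equation being the immediate compatibility $\dot x_2 = z_4'=z_5$. Your explicit polynomial verifications of $2P_7=8x_3z_4+16x_2z_5$ and of the $\dot z_4$ identity are accurate and simply make the paper's one-line argument fully explicit.
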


The following results give for genus $g=2$ an analogue of corollary \ref{cor6}.

\begin{thm}
The function $U = 2\wp_{2,0}(u_1, u_3; \lambda)$
 is a solution of the non-linear differential equation of order six
\begin{equation} \label{eq-14}
 U' U'''''' - U'' U''''' - 10 U U' U''''  + 10(U U'' - 3 (U')^2) U''' + 60 U (U')^3= 0.
\end{equation}
\end{thm}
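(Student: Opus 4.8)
The plan is to reduce \eqref{eq-14} to a computation carried out entirely in the coordinates $(X,Z)$ and then to eliminate the one remaining parameter. Writing $f'=\partial_{u_1}f$ and $U=2x_2$, I would first use the system $S_1$ of Theorem \ref{t-9} to express each derivative $U,U',\dots,U^{(5)}$ as a homogeneous polynomial in $x_2,x_3,x_4,z_4,z_5,z_6$: repeated application of the substitution $x_2\mapsto x_3\mapsto x_4\mapsto P_5$, $z_4\mapsto z_5\mapsto z_6\mapsto P_7$ gives, for instance, $U'=2x_3$, $U''=2x_4$, $U'''=8(3x_2x_3+z_5)$, and so on. Each $U^{(k)}$ is homogeneous of degree $k+2$, so the left-hand side of \eqref{eq-14} is homogeneous of degree $11$; this degree count serves as a running consistency check, and note that $U^{(6)}$ will enter only formally in the final step, so it need never be expanded in coordinates.

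The key intermediate step is the fifth-order equation
\[
R:=U^{(5)}-20U'U''-10UU'''+30U^2U'=-8\,\widehat\lambda_4\,U',
\]
the genus-$2$ stationary (Novikov) equation of the KdV hierarchy, which carries the single parameter $\widehat\lambda_4$ of \eqref{3}. I would obtain it in one of two equivalent ways. The structural route is to write out the stationary condition $\mathcal R\chi_2U=0$ established above: expanding $\mathcal R\chi_2U=\tfrac14(\dot U)''-U\dot U-\tfrac12 U'\,\partial_{u_1}^{-1}\dot U$ and using $\dot U=\tfrac14(U'''-6UU')$ together with $\partial_{u_1}^{-1}\dot U=V=2\wp_{1,3}=\tfrac14(U''-3U^2)-\widehat\lambda_4$ (the additive constant equal to $-\widehat\lambda_4$, fixed by \eqref{3}) yields the displayed relation. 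The direct route, better adapted to the present setting, is simply to substitute the coordinate expressions for $U^{(5)},U''',\dots$ together with $8\widehat\lambda_4=4(x_4-6x_2^2-4z_4)$ from \eqref{3}; the coefficients of $x_3x_4$, $x_2^2x_3$, $x_2z_5$ and $x_3z_4$ then cancel identically, confirming $R+8\widehat\lambda_4U'=0$.

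It remains to remove the parameter. Since $\mathcal L_1$ annihilates $\widehat\lambda_4$ by Corollary \ref{cor15}, differentiating the fifth-order relation along $u_1$ gives $R'=-8\,\widehat\lambda_4\,U''$. Eliminating $\widehat\lambda_4$ between $R=-8\widehat\lambda_4U'$ and $R'=-8\widehat\lambda_4U''$, that is, forming the Wronskian-type combination $U'R'-U''R$, produces a parameter-free identity, and a short expansion
\[
U'R'-U''R=U'U^{(6)}-U''U^{(5)}-10UU'U''''+10\big(UU''-3(U')^2\big)U'''+60U(U')^3
\]
shows that it is exactly the left-hand side of \eqref{eq-14}; here the terms in $U'(U'')^2$ and in $U^2U'U''$ cancel in pairs. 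Conceptually, $U'R'-U''R=0$ is merely the statement that $R/U'\equiv-8\widehat\lambda_4$ is constant along the $\mathcal L_1$-flow. The only real labour is the bookkeeping in the two verifications above; I expect no essential obstacle, the one non-routine point being the recognition that the sixth-order, parameter-free equation is precisely the consistency condition obtained by differentiating the fifth-order Novikov equation and eliminating $\widehat\lambda_4$.
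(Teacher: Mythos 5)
Your proposal is correct and follows essentially the same route as the paper: both reduce the problem to a one-parameter fifth-order equation $U'''''-10UU'''-20U'U''+30U^2U'+8\lambda_4U'=0$ (the paper's \eqref{eq-16}, obtained there by first deriving the fourth-order equation \eqref{eq-15} from $S_1$ and formulas \eqref{3}--\eqref{4} and differentiating once), and then differentiate once more and eliminate $\lambda_4$ via exactly the combination $U'R'-U''R$. Your only deviations --- going straight to the fifth-order relation using only \eqref{3}, and offering the Lenard-operator derivation as an alternative --- are minor reorganizations, and both of your verification routes check out.
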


\begin{proof}
By the polynomial dynamical system $S_1$, determined by $\mathcal{L}_1$, we have:
\[
x_2'''=P_5=6(x_2^2)'+4z_5.
\]
Consequently,
\[
x_2''''=6(x_2^2)''+4z_6.
\]
From equations \eqref{3} and \eqref{4} we have
\begin{align*}
4z_4 &= x_2''-6x_2^2-2\widehat\lambda_4,\\
2z_6 &= 4x_2(x_2''-3x_2^2-\widehat\lambda_4)-8x_2^3-(x_2')^2 + 4\widehat\lambda_6.
\end{align*}
Thus, we obtain the equation
\[
x_2''''=20x_2x_2''+10(x_2')^2-40x_2^3-8\widehat\lambda_4x_2 + 8\widehat\lambda_6.
\]
Set $U=2x_2$ and pass again to the coordinate system $u_1, u_3, \lambda$.
Further the prime denotes the derivative with respect to $u_1$.
Then
\[
U''''=10UU''+5(U')^2-10U^3-8\lambda_4U + 16\lambda_6.
\]
Excluding the parameters $\lambda_4, \lambda_6$, and differentiating this equation twice we get
\begin{align*}
U''''' &= 10UU'''+20U'U''-30U^2U'-8\lambda_4U',\\
U'''''' &= 10UU''''+30U'U'''+20(U'')^2-30U^2U''-8\lambda_4U''-60U(U')^2.
\end{align*}
Excluding $\lambda_4$ from the last two equations, we obtain
\[
 U' U'''''' - U'' U''''' - 10 U U' U''''  + 10(U U'' - 3 (U')^2) U''' + 60 U (U')^3= 0.
\]
\end{proof}

Directly from the proof of this theorem we obtain
\begin{cor}
1. A solution $U(u_1)$ of the fourth-order differential equation with constant $a$ and $b$
\begin{equation} \label{eq-15}
U''''-10UU''-5(U')^2+10U^3+aU+b=0.
\end{equation}
is the function $U(u_1)=2\wp_{2,0}(u_1,u_3;\lambda_4,\lambda_6,\lambda_8,\lambda_{10})$,
where $U'=\frac{\partial}{\partial u_1}\,U$ and $\lambda_4=\frac{1}{8}\,a,\; \lambda_6=- \frac{1}{16}\,b$.

2. A solution $U(u_1)$ of the fifth-order differential equation with constant $a$
\begin{equation} \label{eq-16}
U''''' - 10UU'''-20U'U''+30U^2U'+aU'=0
\end{equation}
is the function $U(u_1)=2\wp_{2,0}(u_1,u_3;\lambda_4,\lambda_6,\lambda_8,\lambda_{10})$,
where $U'=\frac{\partial}{\partial u_1}\,U$ and $\lambda_4=\frac{1}{8}\,a$.
\end{cor}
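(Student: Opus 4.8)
The plan is to obtain both statements directly from the two differential relations established inside the proof of the preceding theorem, and then to explain why $2\wp_{2,0}$ exhausts the solution set of each equation rather than merely satisfying it.

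First I would recall, for $U = 2\wp_{2,0}(u_1,u_3;\lambda)$, the fourth-order identity
\[
U'''' = 10UU'' + 5(U')^2 - 10U^3 - 8\lambda_4 U + 16\lambda_6
\]
together with its once-differentiated form
\[
U''''' = 10UU''' + 20U'U'' - 30U^2U' - 8\lambda_4 U'.
\]
For part~1 I transpose every term of the first relation to the left-hand side, obtaining
\[
U'''' - 10UU'' - 5(U')^2 + 10U^3 + 8\lambda_4 U - 16\lambda_6 = 0,
\]
and compare with \eqref{eq-15}: matching the coefficient of $U$ and the constant term forces $a = 8\lambda_4$ and $b = -16\lambda_6$, that is $\lambda_4 = \tfrac18 a$ and $\lambda_6 = -\tfrac1{16}b$, exactly the claimed identification. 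For part~2 the same transposition applied to the fifth-order relation yields
\[
U''''' - 10UU''' - 20U'U'' + 30U^2U' + 8\lambda_4 U' = 0,
\]
and comparison with \eqref{eq-16} gives $a = 8\lambda_4$, i.e. $\lambda_4 = \tfrac18 a$, with $\lambda_6,\lambda_8,\lambda_{10}$ unconstrained. At this level the work is purely a coefficient comparison, which is what ``directly from the proof'' refers to.

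The step that genuinely needs care is the converse reading of the word \emph{solution}: one must check that $2\wp_{2,0}$ parametrises the full solution set, not merely one orbit. Equation~\eqref{eq-15} is autonomous of order four, so its solutions form a four-parameter family; on the hyperelliptic side the translation $u_1\mapsto u_1+c$, the second argument $u_3$, and the two remaining moduli $\lambda_8,\lambda_{10}$ (once $\lambda_4,\lambda_6$ are fixed by $a,b$) also furnish four parameters $(c,u_3,\lambda_8,\lambda_{10})$, while for \eqref{eq-16} only $\lambda_4$ is fixed, so $(c,u_3,\lambda_6,\lambda_8,\lambda_{10})$ give the five parameters matching the fifth-order equation. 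To turn this count into a proof I would show that prescribing the Cauchy data of $U$ at a base point determines a point of $\widetilde W_2$: the values $x_2=\tfrac12U,\;x_3=\tfrac12U',\;x_4=\tfrac12U''$ are read off directly, $z_5$ follows from $U'''=8(3x_2x_3+z_5)$ via system $S_1$, then $z_4,z_6$ are recovered from \eqref{3} and \eqref{4} using the fixed values of $\widehat\lambda_4,\widehat\lambda_6$, after which \eqref{5} and \eqref{6} produce $\widehat\lambda_8,\widehat\lambda_{10}$ and $\varphi^{-1}$ locates $(u_1,u_3)$ on the Jacobian. I expect the main obstacle to be precisely this inversion step, namely verifying that the resulting map from Cauchy data to $(c,u_3,\lambda_8,\lambda_{10})$ is a local diffeomorphism onto an open set, so that generic initial conditions are attained; the algebraic half of the corollary, by contrast, is immediate from the displayed relations.
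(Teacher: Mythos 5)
Your coefficient comparison is exactly the paper's proof: the corollary is presented as following ``directly from the proof'' of the preceding theorem, namely by transposing the displayed relation $U''''=10UU''+5(U')^2-10U^3-8\lambda_4U+16\lambda_6$ and its $u_1$-derivative and reading off $a=8\lambda_4$, $b=-16\lambda_6$. The second half of your write-up --- the parameter count and the inversion argument meant to show that $2\wp_{2,0}$ exhausts the solution set --- addresses a stronger claim than the corollary makes (it asserts only that the given function is a solution, in the same sense as the genus-one Corollary \ref{cor6}), is not part of the paper's argument, and is in any case left unfinished by your own admission; the part of the statement actually being proved does not depend on it.
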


\begin{thm}
The function $U = U(u_1) = 2 \wp_{2,0}(u_1, u_3; \lambda)$ 
satisfies the following differential equations:
\begin{align*}
 U' U''' - {1 \over 2} (U'')^2 - 5 U (U')^2 + {5 \over 2} U^4 + 4 \lambda_4 U^2 - 16 \lambda_6 U + 8 \lambda_4^2 - 32 \lambda_8 = 0,\\
 (U''')^2  - 10 U (U'')^2 + 2 (U')^2 U''  + 4 (5 U^3 + 4 \lambda_4 U - 8 \lambda_6) U'' - 2 (15 U^2 + 4 \lambda_4) (U')^2 + B(U) = 0,
\end{align*}
where
\[
 B(U) = 6 U^5 + 16 \lambda_4 U^3 - 96 \lambda_6 U^2 + 96 (\lambda_4^2 - 4 \lambda_8) U + 128 \lambda_4 \lambda_6 - 256 \lambda_{10}.
\]
\end{thm}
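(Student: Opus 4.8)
The plan is to read off both identities as the expressions, purely in $U$ and its $u_1$-derivatives, of the two first integrals $\widehat\lambda_8$ and $\widehat\lambda_{10}$ of the system $S_1$. By Corollary \ref{cor15} the field $\mathcal{L}_1=\partial_{u_1}$ annihilates each $\widehat\lambda_k$, so along a trajectory of $\mathcal{L}_1$ the polynomials $\widehat\lambda_8(X,Z)$ and $\widehat\lambda_{10}(X,Z)$ of (\ref{5})--(\ref{6}) take the constant values $\lambda_8,\lambda_{10}$. It therefore suffices to eliminate $z_4,z_5,z_6$ from (\ref{5}) and (\ref{6}) in favour of $x_2=U/2$ and its derivatives, and to rewrite the outcome in terms of $U$.

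First I would record the substitutions. Writing $x_3=x_2'$, $x_4=x_2''$ and using (\ref{3}), (\ref{4}) together with system $S_1$ of Theorem \ref{t-9} (exactly as in the proof of the preceding theorem, carried one derivative further), one gets
\[
z_4=\tfrac14\bigl(x_2''-6x_2^2-2\widehat\lambda_4\bigr),\qquad z_5=z_4'=\tfrac14 x_2'''-3x_2x_2',
\]
\[
z_6=z_5'=2x_2x_2''-10x_2^3-2\widehat\lambda_4 x_2-\tfrac12(x_2')^2+2\widehat\lambda_6,
\]
the last equality being equivalent to the fourth-order relation $x_2''''=20x_2x_2''+10(x_2')^2-40x_2^3-8\widehat\lambda_4 x_2+8\widehat\lambda_6$ established above.

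For the first identity I would substitute these three expressions into $\widehat\lambda_8=-\tfrac12\bigl(x_2z_6+x_4z_4-8x_2^2z_4-2z_4^2-x_3z_5\bigr)$ from (\ref{5}). The only third-derivative term comes from $-x_3z_5$, and the only $(x_2'')^2$ contributions from $x_4z_4-2z_4^2$; collecting all terms, multiplying by $-32$, and passing to $U=2x_2$ converts the definition of $\widehat\lambda_8$ precisely into the first stated equation (grading check: every monomial has weight $8=\deg\widehat\lambda_8$). For the second identity I would substitute the same expressions into $\widehat\lambda_{10}=\tfrac14\bigl(8x_2z_4^2-2z_4z_6+z_5^2\bigr)$ from (\ref{6}); here the $(x_2''')^2$ term arises solely from $z_5^2$, and after multiplication by $256$ and the passage to $U$ one obtains a degree-$10$ differential relation in $U,\lambda_4,\lambda_6,\lambda_{10}$ alone.

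The last step is to bring this degree-$10$ relation to the displayed form. The substitution into (\ref{6}) produces no $\lambda_8$, whereas the target equation does; since the first identity $E_1=0$ is the only relation at our disposal carrying $\lambda_8$ (note that differentiating $E_1$ kills its $\lambda_8$-term, as $E_1'=U'[\,x_2''''\text{-relation}\,]\equiv0$), the difference between the two relations must be a multiple of $E_1$. As $E_1$ has weight $8$ and both relations have weight $10$, homogeneity forces this multiple to be a constant times $U\cdot E_1$, the constant being fixed by comparing the coefficients of $\lambda_8U$; adding it converts the $\lambda_8$-free relation into the stated one, with the combination $96(\lambda_4^2-4\lambda_8)U$ appearing inside $B(U)$ and all remaining coefficients matching. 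I expect the single genuine obstacle to be bookkeeping: the second substitution is a lengthy homogeneous polynomial computation in six variables, and one must keep the dependence on $\widehat\lambda_4,\widehat\lambda_6$ (hidden inside $z_4,z_5,z_6$) consistent while reducing to the compact $\lambda_8$-form. Homogeneity in the grading $\deg x_i=i$, $\deg z_j=j$, $\deg\lambda_k=k$ provides a strong term-by-term check throughout.
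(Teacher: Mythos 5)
Your proposal is correct and follows exactly the route the paper intends: the paper gives no separate proof of this theorem, but the corollary immediately preceding it records $\widehat\lambda_8$ and $\widehat\lambda_{10}$ as differential polynomials in $x_2,\dots,x_5$ and $\widehat\lambda_4,\widehat\lambda_6$, and the first equation is precisely $-32(\widehat\lambda_8-\lambda_8)=0$ rewritten in $U=2x_2$, while the second is the rewriting of $-256(\widehat\lambda_{10}-\lambda_{10})=0$ plus $12\,U E_1$ (your homogeneity argument correctly pins down the constant $12$ from the $\lambda_8 U$ coefficients $-384/-32$). I checked the substitutions and the difference of the two degree-$10$ relations; everything matches.
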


\begin{cor}
The function $U = U(u_1) = 2 \wp_{2,0}(u_1, u_3; \lambda)$ satisfies the equation
\[
 (U'')^4 - 20 U (U')^2 (U'')^2 - A_1(U) (U'')^2 + 8 (U')^4  U'' + A_2(U) (U')^2 U'' - A_3(U) (U')^4
 - A_4(U) (U')^2 + A_5(U) = 0,
\]
where
\begin{align*}
 A_1(U) &= 10 U^4 + 16 \lambda_4 U^2 - 64 \lambda_6 U + 32 \lambda_4^2 - 128 \lambda_8,\\
 A_2(U) &= 80 U^3 + 64 \lambda_4 U - 128 \lambda_6,\\
 A_3(U) &= 20 U^2 + 32 \lambda_4,\\
 A_4(U) &= 76 U^5 + 96 \lambda_4 U^3 - 256 \lambda_6 U^2 - 64 (\lambda_4^2 - 4 \lambda_8) U - 512 \lambda_4 \lambda_6 +1024 \lambda_{10},\\
 A_5(U) &= (5 U^4 + 8 \lambda_4 U^2 - 32 \lambda_6 U + 16 \lambda_4^2 - 64 \lambda_8)^2.
\end{align*}
\end{cor}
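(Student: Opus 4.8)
The plan is to derive this relation directly from the two equations of the preceding theorem by eliminating the third derivative $U'''$. Writing $Q(U)=5U^4+8\lambda_4U^2-32\lambda_6U+16\lambda_4^2-64\lambda_8$, the first equation of the theorem can be rewritten as
\[
U'U''' = \tfrac12(U'')^2 + 5U(U')^2 - \tfrac12 Q(U),
\]
since its purely polynomial part $\tfrac52U^4+4\lambda_4U^2-16\lambda_6U+8\lambda_4^2-32\lambda_8$ equals $\tfrac12 Q(U)$. The point of introducing $Q$ is that $A_5(U)=Q(U)^2$, so the derivative-free term of the target equation is already accounted for.

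First I would square the displayed identity, which produces $(U')^2(U''')^2$ on the left:
\[
(U')^2(U''')^2 = \Bigl(\tfrac12(U'')^2 + 5U(U')^2 - \tfrac12 Q(U)\Bigr)^2.
\]
Next I would multiply the second equation of the theorem throughout by $(U')^2$, obtaining a second expression for the same quantity $(U')^2(U''')^2$, now linear in $U''$ with coefficients polynomial in $U,U'$ and involving $B(U)$. Equating the two expressions eliminates $U'''$ completely and yields a polynomial identity in $U,U',U''$ alone.

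The remaining work is to expand both sides and collect by the monomials $(U'')^4$, $(U')^2(U'')^2$, $(U'')^2$, $(U')^4U''$, $(U')^2U''$, $(U')^4$, $(U')^2$, and the constant term, then clear the overall factor by multiplying through by $4$. Matching coefficients should give $A_1(U)=2Q(U)$, $A_2(U)=16(5U^3+4\lambda_4U-8\lambda_6)$, $A_3(U)=20U^2+32\lambda_4$, and $A_5(U)=Q(U)^2$ immediately, while the coefficient of $(U')^2$ emerges as $20U\,Q(U)-4B(U)$. I expect no conceptual obstacle: the only delicate point is the final bookkeeping in the identification $A_4(U)=20U\,Q(U)-4B(U)$, where, after substituting the explicit $B(U)$ from the theorem, the linear-in-$U$ terms $320\lambda_4^2U$ from $20UQ$ and $384\lambda_4^2U-1536\lambda_8U$ from $4B$ must combine into precisely $-64(\lambda_4^2-4\lambda_8)U$, matching the stated $A_4(U)$.
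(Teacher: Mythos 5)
Your derivation is correct and is exactly the intended route: the paper states this corollary without proof immediately after the theorem giving the two equations, and eliminating $U'''$ by squaring $U'U''' = \tfrac12(U'')^2+5U(U')^2-\tfrac12 Q(U)$ and comparing with $(U')^2$ times the second equation is the only natural derivation. All your coefficient identifications check out, including $A_4(U)=20U\,Q(U)-4B(U)$ (noting that the $-1280\lambda_8 U$ term from $20UQ$ also participates alongside the terms you list).
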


Set $\partial_1 f = {\partial \over \partial u_1} f = f'$.

Consider the Schr\"odinger operator $S = - \partial_1^2 + U$. In \cite{GD} the asymptotic expansion of the resolvent of $S$ was obtained in the form
\[
 R(x; \zeta) = \sum_{l=0}^\infty R_l(U) {1 \over \zeta^{l + {1 \over 2}}}
\]
for $\zeta \to \infty$. 
The coefficients of this expansion are differential polynomials in $U$ by $\partial_1$. They are related by the recursion
(see the action of the Lenard operator)
\[
 R_{l+1}' = {1 \over 4} R_l'' - U R_l' - {1 \over 2} U' R_l
\]
with initial condition $R_0 = {1 \over 2}$.

In \cite{GD} an algorithm to calculate the polynomials $R_l$ was described.

We have
\begin{align*}
 R_1 &= - {1 \over 4} U,\\
 R_2 &= {1 \over 16} (3 U^2 - U''),\\
 R_3 &= - {1 \over 64} (10 U^3 - 10 U U'' - 5 (U')^2 + U^{IV}),\\
 R_4 &= {1 \over 256} (35 U^4 - 70 U (U')^2 - 70 U^2 U'' + 21 (U'')^2 + 28 U' U''' + 14 U U^{IV} - U^{VI}).
\end{align*}

S.~P.~Novikov's equation (the higher stationary KdV equation) is the ordinary differential equation of order $2n$
\[
 \sum_{k=0}^{n+1} c_k R_k(U) = 0
\]
on the function $U(u_1)$. Here $c_{n+1} = 1$ and $c_k$ are arbitrary constants.
This equation has $n$ independent integrals $I_l$, $l = 1, \ldots, n.$

Thus we obtain that the equation \eqref{eq-15} is the equation
\[
- 64 R_3 - 4 a R_1 + 2 b R_0 = 0.
\]
The integrals $I_1$ and $I_2$ of this equation are differential polynomials in $U$ that determine the parameters
$\lambda_8$ and $\lambda_{10}$ of the curve.

In \cite{Kud-04} in the section ``Transcendents determined by nonlinear fourth-order equations''
much attention is given to the non-autonomous differential equation
\[
Y''''+10YY''+5(Y')^2 +10Y^3+\alpha Y - z=0
\]
where $Y'=\frac{\partial Y}{\partial z}$.
The change of variables $Y = \lambda^{-2} \widetilde{Y}$, $z = \lambda \widetilde{z}$,
where $\lambda$ is some parameter, brings this equation into
\begin{equation} \label{equ-19}
 \widetilde{Y}'''' + 10 \widetilde{Y} \widetilde{Y}'' + 5 \widetilde{Y}'^2 + 10 \widetilde{Y}^3 = \lambda^7 z - \alpha \lambda^4 \widetilde{Y},
\end{equation}
where the prime denotes the derivative with respect to $\widetilde{z}$.
Note that in \cite{Kud-04} a misprint was made: the equation (\ref{equ-19}) is given without the term $- \alpha \lambda^4 \widetilde{Y}$.

For $\lambda = 0$
equation (\ref{equ-19}) with $\widetilde{Y}=-U$ becomes a special case of our equation (\ref{eq-15})
for $a=b=0$, that is an equation corresponding to a KdV equation solution on the Jacobi variety of the curve
$$
 \left\{ (x,y) \in \mathbb{C}^2\,|\, y^2 = x^5 +  \lambda_8 x + \lambda_{10} \right\}.
$$
Using formulas \eqref{3}--\eqref{6},
we obtain two integrals of this equation that correspond to parameters $\lambda_8$ and $\lambda_{10}$.

\section{Appendix A\\
Necessary information on elliptic functions}

For each elliptic curve, with affine part of the form
\[
V_g = \{ (x,y) \in \mathbb{C}^2\;|\; y^2 = 4 x^3 - g_2 x - g_3 \},
\]
the sigma-function $\sigma(u; g_2, g_3)$ is constructed (see \cite{UW-63, BL-05-Add}).
This function is an entire function in $u \in \mathbb{C}$ with parameters $(g_2, g_3) \in \mathbb{C}^2$.
It has a series expansion in $u$ over the polynomial ring $\mathbb{Q}[g_2, g_3]$ in the vicinity of $0$
(see, for example, \cite{BL-05-Add}).
The initial segment of the expansion has the form
\begin{equation}\label{F-2}
\sigma(u)=  u - {g_2 \over 2} {u^5 \over 5!} - 6 g_3 {u^7 \over 7!} - {9 g_2^2 \over 4 } {u^9 \over 9!} - 18 g_2 g_3 {u^{11} \over {11}!} +
(u^{13}).
\end{equation}

In \cite{BL-05-Add} a classical recursion is described.
It allows to restore all homogeneous polynomials in $g_2,\, g_3$
that give the coefficients of the expansion of $\sigma(u; g_2, g_3)$ as a series in $u$.

We need the following properties of the sigma function:

1. The system of equations holds
\begin{equation} \label{F-3}
Q_0 \sigma= 0, \quad Q_2 \sigma= 0, \quad \text{ where } \quad
Q_0 = \ell_0 - H_0, \quad Q_2 = \ell_2 - H_2,
\end{equation}
\begin{equation} \label{F-4}
\begin{pmatrix} \ell_0 \\ \ell_2 \end{pmatrix} = T \begin{pmatrix}
{\partial \over \partial g_2}  \\
 {\partial \over \partial g_3} \end{pmatrix}, \quad T =
 \begin{pmatrix} 4 g_2 & 6 g_3 \\
  6 g_3 & {1 \over 3} g_2^2 \end{pmatrix},
\end{equation}
\begin{equation} \label{F-5}
H_0 = u {\partial \over \partial u} - 1, \quad H_2 =
{1 \over 2} {\partial^2 \over
\partial u^2} + {1 \over 24} g_2 u^2.
\end{equation}

\begin{thm} [Uniqueness conditions for the sigma-function]\text{  }\\ \label{T-2}
The entire function $\sigma(u; g_2, g_3)$ is uniquely determined by the conditions:
\[ Q_0 \sigma = 0, \quad Q_2 \sigma = 0, \]
\[
\sigma(0; g_2, g_3) = 0, \quad \left. \left({\partial \over \partial
u} \sigma(u; g_2, g_3)\right)\right|_{u = 0} = 1.
\]
\end{thm}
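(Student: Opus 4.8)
The plan is to pass from the entire function $\sigma$ to its Taylor expansion in $u$ and to show that the two differential equations, together with the normalization, determine every Taylor coefficient uniquely. Since $Q_0$ and $Q_2$ are linear, it suffices to argue coefficient by coefficient: writing
\[
\sigma(u;g_2,g_3)=\sum_{n\geqslant 0} a_n(g_2,g_3)\,u^n,
\]
the conditions $\sigma(0;g_2,g_3)=0$ and $\partial_u\sigma|_{u=0}=1$ fix $a_0=0$ and $a_1=1$, and I would show that $Q_0\sigma=0$ and $Q_2\sigma=0$ determine all remaining $a_n$ recursively.

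First I would read off the content of $Q_0\sigma=0$. By \eqref{F-4}--\eqref{F-5} the operator $Q_0=\ell_0-H_0=4g_2\partial_{g_2}+6g_3\partial_{g_3}-u\partial_u+1$ is, up to the additive constant, the Euler operator for the grading $\deg g_2=4,\ \deg g_3=6,\ \deg u=-1$. Hence $Q_0\sigma=0$ is exactly the statement that $\sigma$ is homogeneous of degree $-1$, so each $a_n$ is a homogeneous polynomial in $g_2,g_3$ of degree $n-1$. Because $\deg g_2$ and $\deg g_3$ are even, this forces $a_n=0$ for even $n$, i.e.\ $\sigma$ is odd in $u$, and confines each odd coefficient to a finite-dimensional space of homogeneous polynomials.

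The essential recursion comes from $Q_2\sigma=0$. Using $Q_2=\ell_2-H_2=6g_3\partial_{g_2}+\tfrac13 g_2^2\partial_{g_3}-\tfrac12\partial_u^2-\tfrac1{24}g_2u^2$ and extracting the coefficient of $u^n$, I would obtain
\[
\tfrac12 (n+1)(n+2)\,a_{n+2}=6g_3\,\partial_{g_2}a_n+\tfrac13 g_2^2\,\partial_{g_3}a_n-\tfrac1{24}g_2\,a_{n-2},
\]
with the convention $a_{-1}=a_{-2}=0$. The point that makes uniqueness work is that the leading coefficient $\tfrac12(n+1)(n+2)$ is nonzero for every $n\geqslant 0$; therefore the recursion expresses $a_{n+2}$ explicitly in terms of the already-known polynomials $a_n$ and $a_{n-2}$. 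Starting from $a_0=0$, $a_1=1$, an induction on $n$ then determines every coefficient uniquely (the even chain stays identically zero, while the odd chain reproduces the expansion \eqref{F-2}, e.g.\ $a_3=0$, $a_5=-g_2/240$). Consequently any two entire functions satisfying all four conditions have identical Taylor coefficients and hence coincide.

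I expect the only real work to be the bookkeeping in deriving the recursion from $Q_2\sigma=0$ — correctly collecting the four contributions to the coefficient of $u^n$ — together with the observation that the leading coefficient never vanishes, which is precisely what guarantees solvability at each step. Everything else is a routine induction, and no convergence issue arises, since the function is assumed entire and we only compare formal Taylor data.
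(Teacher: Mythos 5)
Your proof is correct and is essentially the argument the paper has in mind: the paper states Theorem \ref{T-2} without proof, but immediately above it points to the classical recursion of \cite{BL-05-Add} that restores all coefficients of the $u$-expansion, which is exactly the recursion you derive from $Q_2\sigma=0$ (with $Q_0\sigma=0$ supplying the homogeneity/parity constraints and the normalization fixing $a_0=0$, $a_1=1$). Your computed values $a_5=-g_2/240$, and the nonvanishing of $\tfrac12(n+1)(n+2)$, check out against the expansion \eqref{F-2}, so nothing further is needed.
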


2. From equation $Q_0 \sigma = 0$ it follows that $\sigma$
is a homogeneous function of degree $-1$ in $u$, $g_2$, $g_3$ with respect to the grading
$deg \, u = - 1$, $deg \, g_2 = 4$, $deg \, g_3 = 6$.

3. The discriminant of the curve $V_g$ is equal to $\Delta = g_2^3 - 27 g_3^2 = {3 \over 4} \det T$.
Set $\mathcal{B} = \{ g=(g_2, g_3)\in \mathbb{C}^2\; |\; \Delta \ne 0 \}$, then the curve $V_g$,
where $g=(g_2, g_3) \in \mathcal{B}$, is non-singular. The fields $\ell_0$ and $\ell_2$
are tangent to the discriminant manifold $\{(g_2, g_3) \in \mathbb{C}^2, \Delta(g_2, g_3) = 0\}$,
since $\ell_0 \Delta = 12 \Delta$, $\ell_2 \Delta = 0$.

Thus, the fields $\ell_0$ and $\ell_2$ determine the derivations of the local ring $\mathbb{C}[g_2, g_3]/(\Delta)$.

The function $\sigma(u)$ is quasiperiodic in $u$ with respect to the lattice generated by $(2\omega,2\omega')$:
\[
\sigma(u+2\omega) = e^{2\eta(u+\omega)}\sigma(u),\quad \sigma(u+2\omega') = e^{2\eta'(u+\omega')}\sigma(u).
\]
The parameters $\omega$, $\omega'$, $\eta$, and $\eta'$ are determined by the relations
\[
2 \omega = \oint_a {dx \over y}, \quad 2 \omega' = \oint_b {dx \over
y}, \quad 2 \eta = - \oint_a x {dx \over y}, \quad 2 \eta' = -\oint_b x {dx \over y},
\]
where ${dx \over y}$ and $x{dx \over y}$ form the basis of holomorphic differentials on $V_{(g_2, g_3)}$,
and $a$, $b$ are basic cycles on the curve such that the integrals satisfy the Legendre identity
$\eta \omega' - \omega \eta' = {\pi i \over 2}$.

The function $\zeta(u)$ is determined by the relation $\zeta(u) = {\partial
\over \partial u} \ln \sigma(u)$ and is expressed by the series
\begin{equation}\label{F-8}
\zeta(u; \omega, \omega') = {1 \over u} + \sum_{n^2 + m^2 \neq 0}
\left(\frac{1}{u - 2 m \omega - 2 n \omega'} + \frac{1}{2 m \omega - 2 n \omega'} + \frac{u}{(2 m \omega - 2 n \omega')^2}\right)\,.
\end{equation}

\begin{thm} [Weierstrass, see \cite{UW-63}]
There is the following uniformization of the elliptic curve $V_g$ {\rm(see equation $(\ref{2})$)}
\[
\wp'(u)^2=4\wp^3(u)-g_2\wp(u)-g_3.
\]
\end{thm}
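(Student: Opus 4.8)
The plan is to give the classical Liouville argument. First I would extract the Laurent expansion of $\wp$ at $u=0$ directly from the sigma series \eqref{F-2}. Writing $\sigma(u)=u\left(1-\tfrac{g_2}{240}u^4-\tfrac{g_3}{840}u^6+O(u^8)\right)$, one has $\zeta=\partial_u\ln\sigma=\tfrac1u-\tfrac{g_2}{60}u^3-\tfrac{g_3}{140}u^5+O(u^7)$, and hence
\[
\wp(u)=-\partial_u\zeta(u)=\frac{1}{u^2}+\frac{g_2}{20}u^2+\frac{g_3}{28}u^4+O(u^6).
\]
Differentiating gives $\wp'(u)=-\tfrac{2}{u^3}+\tfrac{g_2}{10}u+\tfrac{g_3}{7}u^3+O(u^5)$. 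These few coefficients are all that the cancellation below will require.

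Next I would verify that $\wp$ is an even elliptic function for the lattice generated by $(2\omega,2\omega')$. Evenness is immediate from the oddness of $\sigma$. For periodicity, the quasiperiodicity relations $\sigma(u+2\omega)=e^{2\eta(u+\omega)}\sigma(u)$ and $\sigma(u+2\omega')=e^{2\eta'(u+\omega')}\sigma(u)$ show that $\ln\sigma$ changes by an affine-linear function of $u$ under lattice translation; since $\wp=-\partial_u^2\ln\sigma$, the second derivative annihilates this affine term, so $\wp(u+2\omega)=\wp(u)$ and $\wp(u+2\omega')=\wp(u)$.

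The heart of the argument is then a single Liouville step. Consider
\[
f(u)=\wp'(u)^2-4\wp(u)^3+g_2\wp(u)+g_3,
\]
which is elliptic with possible singularities only at the lattice points. Substituting the Laurent expansions above, the $u^{-6}$ and $u^{-2}$ polar terms and the constant term all cancel, so $f$ is holomorphic at $u=0$ with $f(0)=0$; by the periodicity just established, $f$ is an entire doubly periodic function, hence bounded, hence constant by Liouville's theorem, and that constant equals $f(0)=0$. This yields $\wp'^2=4\wp^3-g_2\wp-g_3$.

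The only delicate point is the coefficient bookkeeping that forces the triple cancellation of the two polar terms and the constant term; everything there is pinned down by \eqref{F-2} and is routine. As an alternative one could instead read the differential equation off the annihilation relation $Q_2\sigma=0$ from \eqref{F-3}, but the Liouville route is the most transparent.
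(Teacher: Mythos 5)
Your argument is correct, and it is the standard classical (Liouville) proof: the Laurent data you extract from \eqref{F-2} are right ($\wp=u^{-2}+\tfrac{g_2}{20}u^{2}+\tfrac{g_3}{28}u^{4}+O(u^{6})$, no constant term), the evenness of $\wp$ kills the odd-order polar terms of $f=\wp'^2-4\wp^3+g_2\wp+g_3$, and the $u^{-6}$, $u^{-2}$ and constant terms do cancel as you claim, so $f$ is an entire elliptic function vanishing at $0$ and hence identically zero. Note, however, that the paper itself offers no proof of this statement: it is quoted as Weierstrass's classical theorem with a reference to Whittaker--Watson, so there is nothing in the text to compare your route against. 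Your proof fills that gap in the conventional way; the only points worth making explicit are that $\sigma$ vanishes exactly on the lattice (so $f$ has no singularities off the lattice) and that the lattice is nondegenerate for $(g_2,g_3)\in B_1$, which is what makes the Liouville step legitimate. The alternative you mention, reading the relation off $Q_2\sigma=0$ from \eqref{F-3}, is closer in spirit to how the paper actually uses the sigma-function machinery elsewhere (e.g.\ in the derivation of the operators $L_k$), but for this particular identity the Liouville argument is indeed the cleaner choice.
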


\section{Appendix B\\ Necessary information on sigma-functions of genus 2}

For each elliptic curve, with affine part of the form
$$
V_{\lambda} = \left\{ (x,y) \in \mathbb{C}^2\,|\, y^2 = x^5 + \lambda_4 x^3 +
\lambda_6 x^2 + \lambda_8 x + \lambda_{10} \right\},
$$
a sigma-function $\sigma(u; \lambda)$ is constructed (see \cite{BEL-97-2}).
This function is an entire function in $u = (u_1, u_3)^\top \in \mathbb{C}^2$ with parameters $\lambda =
(\lambda_4, \lambda_6, \lambda_8, \lambda_{10})^\top \in \mathbb{C}^4$.
It has a series expansion in $u$ over the polynomial ring
$\mathbb{Q}[\lambda_4, \lambda_6, \lambda_8, \lambda_{10}]$ in the vicinity of $0$.
The initial segment of the expansion has the form
\begin{multline}\label{F-22}
\sigma(u; \lambda) = u_3 - {1 \over 3}\, u_1^3 + {1 \over 6}\,
\lambda_6 u_3^3 - {1 \over 12}\, \lambda_4 u_1^4 u_3 - {1 \over 6}\,
\lambda_6 u_1^3 u_3^2 - \\ - {1 \over 6}\, \lambda_8  u_1^2 u_3^3 -
{1 \over 3}\, \lambda_{10} u_1 u_3^4 + \left({ 1 \over 60}\,
\lambda_4 \lambda_8 + {1 \over 120}\, \lambda_6^2\right) u_3^5 +
(u^7).
\end{multline}
Hereinafter, $(u^k)$ denotes the ideal generated by monomials
$u_1^i u_3^j$, $i+j = k$.

The sigma-function is an odd function in $u$, i.e. $\sigma(-u;\lambda)=-\sigma(u;\lambda)$.

Set $\nabla_{\lambda} = \left(
{\partial \over \partial \lambda_4}, \;   {\partial \over \partial
\lambda_6},\;   {\partial \over \partial \lambda_8}, \;   {\partial
\over \partial \lambda_{10}} \right)^\top$ and 
$\partial_{u_1}={\partial \over \partial u_1}, \;  \partial_{u_3}= {\partial \over \partial u_3}.$

We need the following properties of the two-dimensiona sigma-function (see details in \cite{BEL-97-2},
\cite{BL-05-Add})\,:

1. The system of equations holds
\begin{equation} \label{F-23}
Q_i \, \sigma = 0, \quad \text{ where } \quad Q_i = \ell_i - H_i, \quad i
= 0,2,4,6,
\end{equation}
\begin{equation} \label{F-24}
(\ell_0\;\ell_2\;\ell_4\;\ell_6)^\top=T \, \nabla_\lambda,
\end{equation}
\[ T =
\begin{pmatrix}
4 \lambda_4 & 6 \lambda_6 & 8 \lambda_8 & 10 \lambda_{10} \\
6 \lambda_6 & 8 \lambda_8 - {12 \over 5} \lambda_4^2 & 10 \lambda_{10}
- {8 \over 5} \lambda_4 \lambda_6 & - {4 \over 5} \lambda_4 \lambda_8 \\
8 \lambda_8 & 10 \lambda_{10} - {8 \over 5} \lambda_4 \lambda_6 &
4 \lambda_4 \lambda_8 - {12 \over 5} \lambda_6^2 & 6 \lambda_4 \lambda_{10}
- {6 \over 5} \lambda_6 \lambda_8 \\
10 \lambda_{10} & - {4 \over 5} \lambda_4 \lambda_8 &
6 \lambda_4 \lambda_{10} - {6 \over 5} \lambda_6 \lambda_8 &
4 \lambda_6 \lambda_{10} - {8 \over 5} \lambda_8^2 \\
\end{pmatrix},
\]
\begin{align}
H_0 &= u_1\partial_{u_1}+3u_3\partial_{u_3}-3, \nonumber \\
H_2 &= {1 \over 2}\,\partial_{u_1}^2 - {4 \over 5}\lambda_4 u_3 \partial_{u_1}+u_1\partial_{u_3} - {3 \over 10}\lambda_4 u_1^2 + {1 \over 10}(15\lambda_8-4\lambda_4^2)u_3^2, \nonumber \\
H_4 &= \partial_{u_1}\partial_{u_3} - {6\over 5}\,\lambda_6u_3 \partial_{u_1} + \lambda_4 u_3 \partial_{u_3} - {1 \over 5}\,\lambda_6u_1^2 + \lambda_8u_1u_3 + {1 \over 10}(30\lambda_{10}-6\lambda_6\lambda_4)u_3^2 - \lambda_4, \\
H_6 &= {1 \over 2}\,\partial_{u_3}^2 - {3 \over 5}\lambda_8 u_3 \partial_{u_1} - {1 \over 10}\,\lambda_8u_1^2 + 2\lambda_{10}u_1u_3 - {3 \over 10}\,\lambda_8\lambda_4 u_3^2 - {1 \over 2}\,\lambda_6. \nonumber
\end{align}

2. From equation $Q_0 \, \sigma = 0$ it follows that  $\sigma$ is a homogeneous function of degree $-3$ in $u_1$, $u_3$, $\lambda_j$
with respect to the grading $\deg \, u_1 = -1$, $\deg \, u_3  = -3$, $\deg\, \lambda_j = j$, and $\deg\,Q_i=i,\; i=0,2,4,6$.

3. The discriminant of the hyperelliptic curve $V_\lambda$ of genus 2 is equal to $\Delta= {16 \over 5}\, \det T$.
It is a homogeneous polynomial in $\lambda$ of degree $40$.
Set $\mathcal{B} = \{ \lambda \in \mathbb{C}^4 | \Delta(\lambda) \ne 0 \}$, then the curve $V_\lambda$
for $\lambda \in \mathcal{B}$ is non-singular.

We have
$$
\ell_0\,\Delta = 40 \Delta, \quad \ell_2\,\Delta = 0,\quad \ell_4\,\Delta =
12 \lambda_4 \Delta,\quad \ell_6\,\Delta = 4 \lambda_6 \Delta.
$$
Thus, the fields $\ell_0,\ell_2,\ell_4$ and $\ell_6$ are tangent
to the manifold $\{ \lambda\in \mathbb{C}^4\;:\; \Delta(\lambda)=0 \}$.

4. The parameters matrices $\omega$,\, $\omega'$,\, $\eta$, and $\eta'$ are determined by the relations
(pages 8--9 in \cite{BEL-97-2})
$$
2 \omega = \left(\oint_{a_i} d u_j\right)_{i, j = 1,3}, \qquad 2
\omega' = \left(\oint_{b_i} d u_j\right)_{i, j = 1,3},
$$
$$
2 \eta = - \left(\oint_{a_i} d r_j\right)_{i, j = 1,3}, \qquad 2
\eta' = - \left(\oint_{b_i} d r_j\right)_{i, j = 1,3},
$$
where the differential forms
\[
d u_1 = x {dx\over y}, \quad d u_3 = {dx\over y}, \quad d r_1 = x^2 {dx\over y},
\quad d r_3 = (3 x^3 + \lambda_4 x) {dx \over y}
\]
form the basis of holomorphic differentials on $V_\lambda$, and
$a_i$, $b_i$ are basic cycles on the curve, chosen in a way that the Legendre relation holds
$$
\begin{pmatrix}
    \omega & \omega'  \\
    \eta & \eta'
\end{pmatrix}
\begin{pmatrix}
    0 & - 1_g  \\
    1_g & 0
\end{pmatrix}
 \begin{pmatrix}
    \omega & \omega'  \\
    \eta & \eta'
\end{pmatrix}^T
 \begin{pmatrix}
    0 & - 1_g  \\
    1_g & 0
\end{pmatrix} = {\pi i \over 2}\,.
$$

The Legendre relation is equivalent to the existence of symmetric matrices
$\tau$ and $\varkappa$ such that $\omega'=\omega\tau,\;
\eta=2\varkappa\omega$ and $\eta'=2\varkappa\omega' - \frac{\pi
i}{2}(\omega^\top)^{-1}$.

5. The function $\sigma(u;\lambda)$ is quasiperiodic in $u$ with respect to the lattice
generated by the $(2\times 4)$-matrix of periods $(2\omega,2\omega')$.

Set $\Omega_1({\bf m},{\bf m}')=\omega {\bf m}+\omega' {\bf m}'$
and $\Omega_2({\bf m},{\bf m}')=\eta {\bf m}+\eta' {\bf m}'$, where
${\bf m}, {\bf m}' \in \mathbb{Z}^2$. Then
\begin{multline*}
\sigma(u+2\Omega_1({\bf m},{\bf m}');\lambda) =\\
= \exp\left\{ 2\Omega_2^\top({\bf m},{\bf m}') \big( {\bf
u}+\Omega_1({\bf m},{\bf m}') \big) \right\}  \exp\left\{
-2\pi(\epsilon^\top {\bf m}'+ {1 \over 2}\, {\bf m}^\top {\bf m}')
\right\}\sigma(u;\lambda).
\end{multline*}

In particular, for ${\bf m}'=0$ we have
\begin{equation*}\label{F-per}
\sigma(u+ 2 \omega {\bf m};\lambda) = \exp\left\{ 2 {\bf
m}^\top \eta^\top (u+\omega {\bf m}) \right\}\sigma(
u;\lambda).
\end{equation*}

The present work is based on the following results.

\begin{thm}[Uniqueness conditions for the two-dimensional sigma-function]
\text{ }\\
The entire function $\sigma(u;\lambda)$ 
is uniquely determined by the system of equations {\rm(\ref{F-23})} and initial conditions
$\sigma(u;0)=u_3-\frac{1}{3}u_1^3$.
\end{thm}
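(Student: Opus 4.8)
The plan is to imitate the argument behind the genus-one uniqueness statement (Theorem~\ref{T-2}): reduce the problem to a recursion on the Taylor coefficients of $\sigma$ in $u=(u_1,u_3)$, in which the system \eqref{F-23} expresses each homogeneous component through the lower ones, and the initial datum $\sigma(u;0)=u_3-\frac13u_1^3$ fixes the single undetermined component. Since the sigma-function is known to exist, only the determinacy (uniqueness) of the coefficients must be established.

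First I would use $Q_0\sigma=0$ to record that $\sigma$ is homogeneous of degree $-3$ for the grading $\deg u_1=-1$, $\deg u_3=-3$, $\deg\lambda_j=j$ (point~2 of Appendix~B). Writing $\sigma=\sum_{n\ge 0}\sigma_n$, where $\sigma_n$ collects the monomials of $\lambda$-weight exactly $n$, homogeneity forces each $\sigma_n$ to be a \emph{finite} combination $\sigma_n=\sum_{i+3j=n+3}a^{(n)}_{i,j}\,u_1^iu_3^j$ with $a^{(n)}_{i,j}$ homogeneous in $\lambda$ of weight $n$; in particular $\sigma_0=u_3-\frac13u_1^3$ is the prescribed datum. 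The remaining equations $Q_i\sigma=0$, $i=2,4,6$, I would rewrite as $\ell_i\sigma=H_i\sigma$. Inspecting $H_2,H_4,H_6$ one sees that their only $\lambda$-weight-$0$ parts are the constant-coefficient operators
\[
H_2^{(0)}=\tfrac12\partial_{u_1}^2+u_1\partial_{u_3},\qquad H_4^{(0)}=\partial_{u_1}\partial_{u_3},\qquad H_6^{(0)}=\tfrac12\partial_{u_3}^2,
\]
while every other term of $H_i$, and all of $\ell_i$, raises the $\lambda$-weight by a positive amount. Collecting the $\lambda$-weight-$n$ part of $\ell_i\sigma=H_i\sigma$ then yields, for each $n\ge 1$,
\[
H_i^{(0)}\sigma_n=\ell_i\sigma_{n-i}-\sum_{k\ge 1}H_i^{(k)}\sigma_{n-k}=:R^{(n)}_i,\qquad i=2,4,6,
\]
whose right-hand sides involve only components $\sigma_m$ with $m<n$ and are thus known by induction.

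The heart of the argument, and the step I expect to be the main obstacle, is to show that these three equations pin down $\sigma_n$ uniquely for $n\ge 1$. For this I would compute the common kernel of $H_2^{(0)},H_4^{(0)},H_6^{(0)}$ on all polynomials in $(u_1,u_3)$: $H_6^{(0)}\sigma_n=0$ forces $\sigma_n$ to be affine in $u_3$, then $H_4^{(0)}\sigma_n=0$ makes its $u_3$-coefficient constant, and finally $H_2^{(0)}\sigma_n=0$ leaves exactly the three-dimensional space spanned by $1$, $u_1$, and $u_3-\frac13u_1^3$. As these span weighted $u$-degrees $0,1,3$ only, the homogeneity constraint (weighted $u$-degree $n+3\ge 4$ for $n\ge1$) excludes all of them, so $H_2^{(0)},H_4^{(0)},H_6^{(0)}$ are jointly injective on the relevant homogeneous space and $\sigma_n$ is determined by $R^{(n)}_2,R^{(n)}_4,R^{(n)}_6$. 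I emphasize that I need not check that the three equations are mutually consistent: the existence of the sigma-function of \cite{BEL-97-2} already furnishes a common solution, and only injectivity is required for uniqueness—this is precisely what lets the over-determined system be handled cleanly.

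Finally, induction on $n$ yields that every $\sigma_n$, and hence $\sigma$, is uniquely determined by \eqref{F-23} together with the datum $\sigma_0=\sigma(u;0)=u_3-\frac13u_1^3$, which is the assertion. It is worth noting that the kernel element $u_3-\frac13u_1^3$ emerging from the injectivity computation is exactly this datum: among the three kernel directions $1$, $u_1$, $u_3-\frac13u_1^3$ only the last has weighted $u$-degree $3$, so homogeneity already forces $\sigma_0$ to be a multiple of it, and prescribing $\sigma(u;0)$ merely normalizes that multiple to $1$. This is the precise genus-two counterpart of the genus-one situation (Theorem~\ref{T-2}), where the single leading operator $\tfrac12\partial_u^2$ has kernel $\{1,u\}$ and the normalization $\sigma(0)=0$, $\sigma'(0)=1$ plays the same role.
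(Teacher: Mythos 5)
Your proposal is correct: the weight decomposition, the identification of the leading operators $H_2^{(0)}=\tfrac12\partial_{u_1}^2+u_1\partial_{u_3}$, $H_4^{(0)}=\partial_{u_1}\partial_{u_3}$, $H_6^{(0)}=\tfrac12\partial_{u_3}^2$, and the computation that their joint kernel is spanned by $1$, $u_1$, $u_3-\tfrac13u_1^3$ (whose weighted degrees $0,1,3$ all miss $n+3\geqslant 4$) together give exactly the needed injectivity, and invoking existence to dispense with consistency is legitimate. The paper itself offers no proof of this theorem, only a citation to \cite{BL-05-Add} for the recursion recovering the coefficients $p_{ij}(\lambda)$ from the initial segment; your argument is a self-contained version of that same recursion approach.
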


Set $\sigma(u;\lambda)=u_3-\frac{1}{3}u_1^3+\sum_{i,j} p_{ij}(\lambda)u_1^iu_3^j$.
In \cite{BL-05-Add} a recursion that allows to recover the polynomials $p_{ij}(\lambda)$ 
form the initial segment of this expansion is given.

\begin{thm}[see \cite{Baker-03, BEL-97-1, BEL-97-2, BEL-12}]
The Abelian functions $\wp_{i,3j}$ and parameters $\lambda=(\lambda_4,\lambda_6,\lambda_8,\lambda_{10})$
are related by the system of equations
\begin{align*}
\wp_{4,0} &= 6\wp_{2,0}^2+4\wp_{1,3}+2\lambda_4,\\
\wp_{3,3} &= 6\wp_{2,0}\wp_{1,3}-2\wp_{0,6},\\
\wp_{2,6} &= 2\wp_{2,0}\wp_{0,6}+4\wp_{1,3}^2+2\lambda_4\wp_{1,3},\\
\wp_{1,9} &= 6\wp_{1,3}\wp_{0,6}+4\lambda_6\wp_{1,3}-2\lambda_8\wp_{2,0}-4\lambda_{10},\\
\wp_{0,12} &= 6\wp_{0,6}^2-12\lambda_{10}\wp_{2,0}+4\lambda_8\wp_{1,3}+4\lambda_6\wp_{0,6}+2\lambda_8\lambda_4.
\end{align*}
\end{thm}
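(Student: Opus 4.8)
The plan is to realize the Jacobian $J_\lambda$ by the Abel map of the divisor $P_1 + P_2 - 2\infty$, where $P_a = (x_a, y_a) \in V_\lambda$, and to reduce every identity to an elementary-symmetric-function computation in $x_1, x_2$. With the holomorphic differentials $du_1 = x\,dx/y$, $du_3 = dx/y$ of Appendix B, the coordinates are $u_k = \sum_{a=1,2}\int^{P_a} du_k$, and inverting the Abel--Jacobi matrix gives the two vector fields
\[
\partial_{u_1} = \frac{y_1\partial_{x_1} - y_2\partial_{x_2}}{x_1 - x_2}, \qquad
\partial_{u_3} = \frac{-x_2 y_1\partial_{x_1} + x_1 y_2\partial_{x_2}}{x_1 - x_2},
\]
which act on the symmetric functions of the points. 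The first step is the Jacobi-inversion identities $\wp_{2,0} = x_1 + x_2$ and $\wp_{1,3} = -x_1 x_2$; I would fix their normalization (and all later additive constants) against the sigma-expansion \eqref{F-22}, using that $\wp_{i,3j} = -\partial_{u_1}^i\partial_{u_3}^j\ln\sigma$ and that $\sigma = u_3 - \tfrac13 u_1^3 + \cdots$ controls the Laurent behaviour near $u=0$.

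With these in hand, each relation is obtained by differentiating and reducing. For instance $\wp_{3,0} = \partial_{u_1}\wp_{2,0} = (y_1-y_2)/(x_1-x_2)$, and a second application of $\partial_{u_1}$ produces $\wp_{4,0}$; after replacing $\partial_{x_a}y_a$ by $f'(x_a)/(2 y_a)$ with $f(x) = x^5 + \lambda_4 x^3 + \cdots$ and clearing $y_a^2 = f(x_a)$, the outcome collapses to $6(x_1+x_2)^2 - 4 x_1 x_2 + 2\lambda_4$, i.e. $6\wp_{2,0}^2 + 4\wp_{1,3} + 2\lambda_4$. The weight-$6$, $8$, $10$ and $12$ relations follow the same template: apply $\partial_{u_1},\partial_{u_3}$ to the lower $\wp$'s, then systematically eliminate. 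A useful bookkeeping device is that every identity is homogeneous in the grading $\deg u_1 = -1$, $\deg u_3 = -3$, $\deg\lambda_k = k$, so at each weight only finitely many monomials in $\wp_{2,0}, \wp_{1,3}, \wp_{0,6}$ and the $\lambda_j$ can occur, which pins the answer down to a short list of undetermined coefficients.

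The main obstacle is the appearance of the \emph{odd} quantities $y_1, y_2$ and, above all, the product $y_1 y_2$, which is not itself a polynomial in the elementary symmetric functions $x_1+x_2, x_1 x_2$. These are handled by Baker's symmetric identity: the weight-$6$ function $\wp_{0,6}$ (equivalently $\wp_{3,3}$, via $\wp_{3,3} = 6\wp_{2,0}\wp_{1,3} - 2\wp_{0,6}$) precisely encodes $2 y_1 y_2$ modulo a symmetric polynomial in $x_1, x_2$ and $\lambda$. I would first establish this expression for $y_1 y_2$ (together with $y_1 + y_2$, obtained by applying $\partial_{u_3}$ to the inversion formulas), and then close every reduction in which a $y_1 y_2$ survives by substituting it. This is the computationally delicate part, since the correct combination must be verified against the curve equation; all residual sign and scaling ambiguities are resolved at the end by the series \eqref{F-22}.

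As an independent check---and as a self-contained alternative that avoids the Abel-map inversion---I would observe that each claimed identity equates two Abelian functions on the compact torus $J_\lambda$ with poles confined to the theta divisor $\Theta = \{\sigma = 0\}$. Forming the difference $D$ of the two sides, a local analysis of the singular part of each $\wp_{i,3j}$ along $\Theta$ (read off from \eqref{F-22}, since $u=0$ is a smooth point of $\Theta$ where $\sigma$ vanishes to first order) shows that the principal parts cancel, so $D$ extends holomorphically over $J_\lambda$ and is therefore constant; evaluating the constant via \eqref{F-22} yields $D \equiv 0$. The pole-cancellation verification is the crux of this second route, but once the leading singular behaviour of the weight-$2,4,6$ generators is known, all five relations reduce to matching finitely many Laurent coefficients.
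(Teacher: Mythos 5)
The paper offers no proof of this theorem: it is quoted as a known result with a pointer to Baker's 1903 paper and to the Buchstaber--Enolskii--Leikin surveys. Your first route --- Jacobi inversion, realizing $\wp_{2,0},\wp_{1,3}$ as symmetric functions of the divisor points, differentiating with the inverted Abel--Jacobi matrix, and closing the reduction with the Baker-type expression of $\wp_{0,6}$ through $y_1y_2/(x_1-x_2)^2$ --- is precisely the classical derivation in the cited Baker reference, so in that sense you are reproducing the intended proof. Your second, Liouville-type route (the difference of the two sides has cancelling principal parts along $\Theta$, hence extends to a holomorphic, therefore constant, function on $J_\lambda$, and the grading leaves only finitely many $\lambda$-monomials for that constant) is an independent and equally standard argument; both are sound in outline.

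There is, however, one concrete error you should not wave away as a ``scaling ambiguity resolved at the end.'' With the curve written monically, $y^2=x^5+\lambda_4x^3+\cdots$, and the differentials $du_1=x\,dx/y$, $du_3=dx/y$ exactly as in Appendix B, the vector fields you wrote down give
\[
\partial_{u_1}^2(x_1+x_2)=\frac{1}{(x_1-x_2)^2}\Bigl[\tfrac{1}{2}\bigl(f'(x_1)+f'(x_2)\bigr)-\tfrac{f(x_1)-f(x_2)}{x_1-x_2}\Bigr]
=\tfrac{3}{2}(x_1+x_2)^2-x_1x_2+\tfrac{1}{2}\lambda_4,
\]
which is one quarter of the expression you claim it ``collapses to.'' So with $\wp_{2,0}=x_1+x_2$, $\wp_{1,3}=-x_1x_2$ you would derive $\wp_{4,0}=\tfrac32\wp_{2,0}^2+\wp_{1,3}+\tfrac12\lambda_4$, not the stated relation. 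This factor of $4$ cannot be absorbed by rescaling the $\wp$'s, because the relation $\wp_{4,0}=6\wp_{2,0}^2+\cdots$ is not invariant under $\wp\mapsto c\,\wp$; it has to be fixed at the level of the Abel map, i.e.\ the inversion formulas $\wp_{2,0}=x_1+x_2$, $\wp_{1,3}=-x_1x_2$ are valid for the curve in the form $y^2=4(x^5+\cdots)$ (equivalently for the differentials $x\,dx/(2y)$, $dx/(2y)$), which is the normalization implicit in the sigma-expansion \eqref{F-22} --- indeed \eqref{F-22} gives $\wp_{2,0}=2u_1/s+u_1^4/s^2+\cdots$, $\wp_{1,3}=-u_1^2/s^2+\cdots$ with $s=u_3-\tfrac13u_1^3$, from which $\wp_{4,0}=6\wp_{2,0}^2+4\wp_{1,3}$ at $\lambda=0$ checks out. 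Your declared safeguard of calibrating against \eqref{F-22} would expose and repair this, so the plan survives; but as written, the central sample computation is off by a factor of $4$, and the same care is needed in every subsequent weight.
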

From this conditions it follows that for any $i$ and $j$, $i+j \geqslant 2$,
the function $\wp_{i, 3 j}$ is a differential polynomial in $\wp_{2,0}$ and $\wp_{1,3}$.

Set $L_1=\partial_{u_1},\; L_3=\partial_{u_3}$.
We introduce the operators $L_i\in\Der \mathcal{F}_2,\; i=0,2,4,6$ based on the operators $Q_i=\ell_i-H_i$.

\begin{thm} \label{t-25}
The generators of the $\mathcal{F}_2$-module $\Der{\mathcal{F}_2}$ are given by the formulas
\begin{align}
L_0&= \ell_0 - u_1 \partial_{u_1} - 3 u_3 \partial_{u_3}, \nonumber\\
L_1&= \partial_{u_1}, \nonumber\\
L_2&= \ell_2 + \left(- \zeta_1 + \frac{4}{5} \lambda_4 u_3\right) \partial_{u_1} - u_1 \partial_{u_3}, \nonumber\\
L_3&= \partial_{u_3}, \label{star} \\
L_4&= \ell_4 + \left(- \zeta_3 + \frac{6}{5} \lambda_6 u_3\right) \partial_{u_1} - \left(\zeta_1 + \lambda_4 u_3\right) \partial_{u_3}, \nonumber \\
L_6&= \ell_6  + \frac{3}{5} \lambda_8 u_3 \partial_{u_1} - \zeta_3 \partial_{u_3}. \nonumber
\end{align}
On the differential ring with respect to commuting operators $\partial_{u_1}$ and $\partial_{u_3}$,
generated by $x_2=\wp_{2,0}(u_1,u_3;\lambda)$ and $z_4=\wp_{1,3}(u_1,u_3;\lambda)$,
the operators $L_i,\; i=0,1,2,3,4,6$, act as follows:
\begin{align}
L_0x_2 &= 2x_2; & L_0z_4 &= 4z_4; \nonumber \\
L_1x_2 &= x_3; & L_1z_4 &= z_5; \nonumber \\
L_2x_2 &= \frac{1}{2}\,x_4 - x_2^2 + 2z_4 + \frac{3}{5}\,\lambda_4 = P_4; & L_2z_4 &= \frac{1}{2}\,z_6 - x_2z_4 - \frac{4}{5}\,\lambda_4x_2 + G_6 = P_6; \nonumber \\
L_3x_2 &= z_5; & L_3z_4 &= \partial_{u_1}G_6; \label{twostar} \\
L_4x_2 &= z_6 - 2x_2z_4 + \frac{2}{5}\,\lambda_6 = F_6; & L_4z_4 &= G_6'' - z_4^2 - x_2G_6 - \frac{6}{5}\,\lambda_6x_2 + \lambda_4z_4 - \lambda_8 = F_8; \nonumber \\
L_6x_2 &= \frac{1}{2}\,G_6'' - z_4^2 + \frac{1}{5}\,\lambda_8 = P_8; & L_6z_4 &= \frac{1}{2}\,\dot{G}_6' - z_4G_6 - \frac{3}{5}\,\lambda_8x_2 - 2\lambda_{10} = F_{10}. \nonumber
\end{align}
\end{thm}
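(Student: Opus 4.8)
The plan is to handle the vertical fields $L_1 = \partial_{u_1}$, $L_3 = \partial_{u_3}$ and the parameter fields $L_0, L_2, L_4, L_6$ separately. Each parameter field has the shape $L_i = \ell_i + a_i\partial_{u_1} + b_i\partial_{u_3}$ with $a_i, b_i$ read off from (\ref{star}); since $\ell_i = (T\nabla_\lambda)_i$ differentiates only in $\lambda$, it commutes with $\partial_{u_1}$ and $\partial_{u_3}$. The vertical fields are manifestly derivations of $\mathcal{F}_2$, and their action in (\ref{twostar}) is immediate: $L_1 x_2 = \wp_{3,0} = x_3$, $L_1 z_4 = \wp_{2,3} = z_5$, $L_3 x_2 = \wp_{2,3} = z_5$, and $L_3 z_4 = \wp_{1,6} = \partial_{u_1}\wp_{0,6} = G_6'$, the last using the fundamental relation $\wp_{0,6} = 3\wp_{2,0}\wp_{1,3} - \frac{1}{2}\wp_{3,3}$ that identifies $G_6$ with $\wp_{0,6}$.

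For the parameter fields the computation proceeds from the heat system $Q_i\sigma = 0$, i.e. $\ell_i\sigma = H_i\sigma$. Because $\ell_i$ commutes with the $u$-derivatives, for any $p,q$ we have
\begin{equation*}
\ell_i\wp_{p,3q} = -\partial_{u_1}^p\partial_{u_3}^q\Bigl(\frac{H_i\sigma}{\sigma}\Bigr),
\end{equation*}
and $H_i\sigma/\sigma$ is evaluated term by term via $\partial_{u_1}\ln\sigma = \zeta_1$, $\partial_{u_3}\ln\sigma = \zeta_3$, $\partial_{u_1}^2\ln\sigma = -\wp_{2,0}$, and so on. After $\partial_{u_1}^p\partial_{u_3}^q$ the result splits into a genuinely Abelian polynomial in the $\wp_{r,3s}$ and a remainder linear in $\zeta_1,\zeta_3$ with coefficients polynomial in $u$. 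The coefficients $a_i,b_i$ are determined by the requirement that $L_i$ preserve $\mathcal{F}_2$: they must reproduce this remainder with the opposite sign, and this is precisely what fixes the forms (\ref{star}). For instance in $L_2 x_2$ the term $-\zeta_1\partial_{u_1}$ cancels a $\zeta_1\wp_{3,0}$ contribution, $\frac{4}{5}\lambda_4 u_3\partial_{u_1}$ cancels $-\frac{4}{5}\lambda_4 u_3\wp_{3,0}$, and $-u_1\partial_{u_3}$ cancels $u_1\wp_{2,3}$, leaving $L_2 x_2 = \frac{1}{2}\wp_{4,0} - \wp_{2,0}^2 + 2\wp_{1,3} + \frac{3}{5}\lambda_4 = P_4$. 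Carrying this out for each of $L_2, L_4, L_6$ on $x_2$ and $z_4$, and reducing the Abelian remainders with the five fundamental relations for $\wp_{4,0}, \wp_{3,3}, \wp_{2,6}, \wp_{1,9}, \wp_{0,12}$ together with the identity $G_6 = \varphi^* \wp_{0,6}$, yields the formulas (\ref{twostar}).

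To conclude that each $L_i$ is a derivation of $\mathcal{F}_2$ and not merely well behaved on $x_2, z_4$, I would verify the closure of the commutators $[\partial_{u_1}, L_i]$ and $[\partial_{u_3}, L_i]$: differentiating the coefficients (using $\partial_{u_1}\zeta_1 = -\wp_{2,0}$, $\partial_{u_1}\zeta_3 = -\wp_{1,3}$, and the analogous identities) produces $\mathcal{F}_2$-linear combinations of $L_1, L_3$, for example $[\partial_{u_1}, L_2] = x_2\partial_{u_1} - \partial_{u_3}$. Since $L_i x_2$ and $L_i z_4$ are Abelian, an induction on $p+q$ using these commutators shows $L_i\wp_{p,3q}$ is Abelian for all $p+q\geqslant 2$; hence $L_i$ preserves $\mathcal{F}_2$. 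Finally, $L_0,\ldots,L_6$ are independent over $\mathcal{F}_2$ off the discriminant, where $\det T\neq 0$ makes $\ell_0,\ell_2,\ell_4,\ell_6$ independent and $L_1, L_3$ supply the two vertical directions; a dimension count then shows they generate the rank-$6$ module $\Der(\mathcal{F}_2)$.

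The main obstacle is the explicit cancellation and reduction in the second paragraph for $L_4$ and $L_6$: there the coefficients involve $\zeta_3$ as well as $\zeta_1$, the remainders after mixed differentiation are bulkier, and one must invoke the higher fundamental relations and the derivative identities for $G_6$ correctly. The commutator and dimension arguments are comparatively routine once the action formulas are in hand.
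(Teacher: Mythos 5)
Your proposal is correct and follows essentially the same route as the paper: both derive $\ell_i\ln\sigma=H_i\sigma/\sigma$ from $Q_i\sigma=0$, differentiate in $u_1,u_3$ using $[\partial_{u_k},\ell_i]=0$, and fix the correction terms $a_i\partial_{u_1}+b_i\partial_{u_3}$ by cancelling the non-Abelian remainder linear in $\zeta_1,\zeta_3,u_1,u_3$, which simultaneously yields the action on $x_2$ and $z_4$. Your added remarks (the induction via $[\partial_{u_k},L_i]$ to see that $L_i$ preserves all of $\mathcal{F}_2$, and the rank count off the discriminant) are left implicit in the paper but are consistent with it.
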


\begin{proof}
We will use the methods of \cite{BL-08} that allow to obtain the explicit form of operators $L_i$
and describe their action on the ring $\mathcal{F}_2$.
We note here that this theorem corrects misprints made in \cite{BL-08}.

We have $L_1=\partial_{u_1} \in \Der \mathcal{F}_2$ and $L_3=\partial_{u_3} \in \Der \mathcal{F}_2$.

Further we use that $[\partial_{u_k},\ell_q]=0,\, k=1,3$ and $q=0,2,4,6$.

1). Derivation of the formula for $L_0$.

We have $\ell_0=H_0=u_1\partial_{u_1}+3u_3\partial_{u_3}-3$. Therefore
\begin{equation}\label{F-26}
\ell_0\ln\sigma = u_1\partial_{u_1}\ln\sigma + 3u_3\partial_{u_3}\ln\sigma - 3.
\end{equation}
We apply to (\ref{F-26}) the operators $\partial_{u_1}$ and $\partial_{u_3}$. We obtain
\begin{align}
\ell_0\zeta_1 &= \zeta_1 - u_1\wp_{2,0} - 3u_3\wp_{1,3}, \label{F-27} \\
\ell_0\zeta_3 &= 3\zeta_3 - u_1\wp_{1,3} - 3u_3\wp_{0,6}. \label{F-28}
\end{align}
We apply to (\ref{F-27}) the operator $\partial_{u_1}$ to obtain
\[
-\ell\wp_{2,0} = -2\wp_{2,0} - u_1\wp_{3,0} - 3u_3\wp_{2,3}.
\]
Therefore
\[
(\ell_0 - u_1\partial_{u_1} - 3u_3\partial_{u_3})\wp_{2,0} = 2\wp_{2,0}.
\]
We apply to (\ref{F-28}) the operator $\partial_{u_1}$ to obtain
\[
-\ell_0\wp_{1,3} = -\wp_{1,3} - u_1\wp_{2,3} - 3\wp_{1,3} - 3u_3\wp_{1,6}.
\]
Therefore
\[
(\ell_0 - u_1\partial_{u_1} - 3u_3\partial_{u_3})\wp_{1,3} = 4\wp_{1,3}.
\]
Thus, we have proved that
\[
L_0 = (\ell_0-u_1\partial_{u_1}-3u_3\partial_{u_3}) \in \Der{\mathcal{F}_2}
\]
and
\[
L_0x_2 = 2x_2, \quad L_0z_4 = 4z_4.
\]

2). Derivation of the formula for $L_2$.

We have $\ell_2=H_2=\frac{1}{2}\,\partial_{u_1}^2 - \frac{4}{5}\,\lambda_4u_3\partial_{u_1} + u_1\partial_{u_3} + w_2$,
where $w_2 = w_2(u_1,u_3) = -\frac{3}{10}\,\lambda_4u_1^2 + \frac{1}{10}\,(15\lambda_8 - 4\lambda_4^2)u_3^2$.
Therefore
\[
\ell_2\ln\sigma = \frac{1}{2}\,\frac{\partial_{u_1}^2\sigma}{\sigma} - \frac{4}{5}\,\lambda_4u_3\partial_{u_1}\ln\sigma  + u_1\partial_{u_3} \ln\sigma + w_2.
\]
The formula holds
\[
\frac{\partial_{u_1}^2\sigma}{\sigma} = -\wp_{2,0} + \zeta_1^2.
\]
We get
\begin{equation}\label{F-29}
\ell_2\ln\sigma = -\frac{1}{2}\,\wp_{2,0} + \frac{1}{2}\,\zeta_1^2 - \frac{4}{5}\,\lambda_4u_3\zeta_1 + u_1\zeta_3 + w_2.
\end{equation}
We apply to (\ref{F-29}) the operators $\partial_{u_1}$ and $\partial_{u_3}$. We obtain
\begin{align*}
\ell_2\zeta_1 &= - \frac{1}{2}\,\wp_{3,0} - \zeta_1 \wp_{2,0} + \frac{4}{5}\,\lambda_4u_3\wp_{2,0} + \zeta_3 - u_1\wp_{1,3} + \partial_{u_1}w_2, \\
\ell_2\zeta_3 &= - \frac{1}{2}\,\wp_{2,3} -  \zeta_1\wp_{1,3} - \frac{4}{5}\,\lambda_4\zeta_1 + \frac{4}{5}\,\lambda_4u_3 \wp_{1,3} - u_1 \wp_{0,6} + \partial_{u_3}w_2.
\end{align*}
Applying again the operator $\partial_{u_1}$, we obtain
\begin{align*}
-\ell_2\wp_{2,0} &= -\frac{1}{2}\,\wp_{4,0} + \wp_{2,0}^2 - \zeta_1\wp_{3,0} + \frac{4}{5}\,\lambda_4u_3\wp_{3,0} - 2\wp_{1,3} - u_1\wp_{2,3} + \partial_{u_1}^2w_2,\\
-\ell_2\wp_{1,3} &= -\frac{1}{2}\,\wp_{3,3} + \wp_{2,0}\wp_{1,3} - \zeta_1\wp_{2,3} + \frac{4}{5}\,\lambda_4\wp_{2,0} + \frac{4}{5}\,\lambda_4u_3\wp_{2,3} - \wp_{0,6} - u_1\wp_{1,6} + \partial_{u_1}\partial_{u_3}w_2.
\end{align*}
Thus, we have proved that
\[
L_2=(\ell_2-\zeta_1\partial_{u_1} - u_1\partial_{u_3} + \frac{4}{5}\,\lambda_4 u_3\partial_{u_1}) \in \Der\mathcal{F}_2.
\]
We have $\partial_{u_1}^2w_2=-\frac{3}{5}$ and $\partial_{u_1}\partial_{u_3}w_2=0$.
Therefore
\begin{align*}
L_2x_2 &= \frac{1}{2}\,x_4 - x_2^2 + 2z_4 + \frac{3}{5}\,\lambda_4 = P_4, \\
L_2z_4 &= \frac{1}{2}\,z_6 - x_2z_4 - \frac{4}{5}\,\lambda_4x_2 + G_6.
\end{align*} 
Substituting the expressions for $\lambda_4$ and $G_6$, we obtain the formula
\[
L_2z_4=P_6.
\]

3). Derivation of the formula for $L_4$.

We have
$\ell_4 = H_4 = \partial_{u_1}\partial_{u_3} - \frac{6}{5}\,\lambda_6u_3\partial_{u_1} + \lambda_4u_3\partial_{u_3} + w_4$,
where $w_4 =  -\frac{1}{5}\,\lambda_6u_1^2 + \lambda_8u_1u_3 + \frac{1}{10}\,(30\lambda_{10} - 6\lambda_6\lambda_4)u_3^2 - \lambda_4$.
Therefore
\[
\ell_4\ln\sigma = \frac{\partial_{u_1}\partial_{u_3}\sigma}{\sigma} - \frac{6}{5}\,\lambda_6u_3\partial_{u_1}\ln\sigma  + \lambda_4u_3\partial_{u_3} \ln\sigma + w_4.
\]
The formula holds
\[
\frac{\partial_{u_1}\partial_{u_3}\sigma}{\sigma} = - \wp_{1,3} + \zeta_1\zeta_3.
\]
We obtain
\begin{equation}\label{F-30}
\ell_4\ln\sigma = - \wp_{1,3} + \zeta_1\zeta_3 - \frac{6}{5}\,\lambda_6u_3\zeta_1 + \lambda_4u_3\zeta_3 + w_4.
\end{equation}
We apply to (\ref{F-30}) the operators $\partial_{u_1}$ and $\partial_{u_3}$. We obtain
\begin{align*}
\ell_4\zeta_1 &= - \wp_{2,3} -  \wp_{2,0}\zeta_3 - \zeta_1 \wp_{1,3} + \frac{6}{5}\,\lambda_6u_3\wp_{2,0} - \lambda_4u_3\wp_{1,3} + \partial_{u_1}w_4, \\
\ell_4\zeta_3 &= - \wp_{1,6} - \wp_{1,3}\zeta_3 - \zeta_1\wp_{0,6} - \frac{6}{5}\,\lambda_6\zeta_1 + \frac{6}{5}\,\lambda_6u_3 \wp_{1,3} + \lambda_4\zeta_3 -  \lambda_4u_3 \wp_{0,6} + \partial_{u_3}w_4.
\end{align*}
By applying again the operator $\partial_{u_1}$,we obtain
\begin{align*}
-\ell_4\wp_{2,0} &= - \wp_{3,3} - \wp_{3,0}\zeta_3 + \wp_{2,0}\wp_{1,3} + \wp_{2,0}\wp_{1,3} - \zeta_1\wp_{2,3} + \frac{6}{5}\,\lambda_6u_3\wp_{3,0} - \lambda_4 u_3\wp_{2,3} + \partial_{u_1}^2w_4,\\
-\ell_4\wp_{1,3} &= - \wp_{2,6} - \wp_{2,3}\zeta_3 + \wp_{1,3}^2 + \wp_{2,0}\wp_{0,6} - \\
& \qquad \qquad \qquad \; - \zeta_1\wp_{1,6} + \frac{6}{5}\,\lambda_6\wp_{2,0} + \frac{6}{5}\,\lambda_6u_3\wp_{2,3} - \lambda_4\wp_{1,3} - \lambda_4u_3\wp_{1,6} + \partial_{u_1}\partial_{u_3}w_4.
\end{align*}
Therefore, we have proved that
\[
L_4=(\ell_4-\zeta_3\partial_{u_1}-\zeta_1\partial_{u_3} + \frac{6}{5}\,\lambda_6 u_3\partial_{u_1} - \lambda_4u_3\partial_{u_3}) \in \Der\mathcal{F}_2.
\]
We have $\partial_{u_1}^2w_4= - \frac{2}{5}\,\lambda_6$ and $\partial_{u_1}\partial_{u_3}w_4=\lambda_8$.
Therefore
\begin{align*}
L_4 x_2 &= z_6 - 2x_2z_4 + \frac{2}{5}\,\lambda_6 = F_6, \\
L_4 z_4 &= G_6'' - z_4^2 - x_2G_6 - \frac{6}{5}\,\lambda_6x_2 + \lambda_4z_4 - \lambda_8 = F_8.
\end{align*}

4). Derivation of the formula for $L_6$.

We have $\ell_6=H_6=\frac{1}{2}\,\partial_{u_3}^2 - \frac{3}{5}\,\lambda_8u_3\partial_{u_1} + w_6$,
where $w_6 = -\frac{1}{10}\,\lambda_8u_1^2 + 2\lambda_{10}u_1u_3 - \frac{3}{10}\,\lambda_8 \lambda_4 u_3^2 - \frac{1}{2}\,\lambda_6$.
Therefore
\[
\ell_6\ln\sigma = \frac{1}{2}\,\frac{\partial_{u_3}^2\sigma}{\sigma} - \frac{3}{5}\,\lambda_8u_3\partial_{u_1}\ln\sigma + w_6.
\]
We obtain
\begin{equation}\label{F-31}
\ell_6\ln\sigma = -\frac{1}{2}\,\wp_{0,6} + \frac{1}{2}\,\zeta_3^2 - \frac{3}{5}\,\lambda_8u_3\zeta_1 + w_6.
\end{equation}
We apply to (\ref{F-31}) the operators $\partial_{u_1}$ and $\partial_{u_3}$. We obtain
\begin{align*}
\ell_6\zeta_1 &= - \frac{1}{2}\,\wp_{1,6} - \zeta_3 \wp_{1,3} + \frac{3}{5}\,\lambda_8u_3\wp_{2,0} + \partial_{u_1}w_6, \\
\ell_6\zeta_3 &= - \frac{1}{2}\,\wp_{0,9} -  \zeta_3\wp_{0,6} - \frac{3}{5}\,\lambda_8\zeta_1 + \frac{3}{5}\,\lambda_8u_3 \wp_{1,3} + \partial_{u_3}w_6.
\end{align*}
By applying again the operator $\partial_{u_1}$, we obtain
\begin{align*}
-\ell_6\wp_{2,0} &= -\frac{1}{2}\,\wp_{2,6} + \wp_{1,3}^2 - \zeta_3\wp_{2,3} + \frac{3}{5}\,\lambda_8u_3\wp_{3,0} + \partial_{u_1}^2w_6,\\
-\ell_6\wp_{1,3} &= -\frac{1}{2}\,\wp_{1,9} + \wp_{1,3}\wp_{0,6} - \zeta_3\wp_{1,6} + \frac{3}{5}\,\lambda_8\wp_{2,0} + \frac{3}{5}\,\lambda_8u_3\wp_{2,3} + \partial_{u_1}\partial_{u_3}w_6.
\end{align*}
Therefore, we have proved that
\[
L_6=(\ell_6-\zeta_3\partial_{u_3} + \frac{3}{5}\,\lambda_8
u_3\partial_{u_1}) \in \Der\mathcal{F}_2.
\]
We have $\partial_{u_1}^2w_6= - \frac{1}{5}\,\lambda_8$ and $\partial_{u_1}\partial_{u_3}w_6 = 2\lambda_{10}$.
Therefore
\begin{align*}
L_6x_2 &= \frac{1}{2}\,G_6'' - z_4^2 + \frac{1}{5}\,\lambda_8 = P_8, \\
L_6z_4 &= \frac{1}{2}\,\dot{G}_6' - z_4G_6 - \frac{3}{5}\,\lambda_8x_2 - 2\lambda_{10} = F_{10}.
\end{align*}
This ends the proof.

\end{proof}

The description of commutation relations in the differential algebra of Abelian functions of genus $2$
was given in \cite{BL-08}, see also \cite{BEL-12}.
We obtain this result directly from Theorem \ref{t-25} and correct some misprints of \cite{BL-08}.
To simplify the calculations we use the following results:

\begin{lem} \label{lem1} 
The following commutation relations on $\ell_k$ hold:
\begin{align*}
&[\partial_{u_1}, \ell_k] =0, \quad k = 0, 2, 4, 6,
&
&[\partial_{u_3}, \ell_k] =0, \quad k = 0, 2, 4, 6,
\\
&[\ell_0, \ell_k] = k \ell_k, \quad k = 2, 4, 6,
&
&[\ell_2, \ell_4] = {8 \over 5} \lambda_6 \ell_0 - {8 \over 5} \lambda_4 \ell_2 + 2 \ell_6,
\\
&[\ell_2, \ell_6] = {4 \over 5} \lambda_8 \ell_0 - {4 \over 5} \lambda_4 \ell_4,
&
&[\ell_4, \ell_6] = - 2 \lambda_{10} \ell_0  + {6 \over 5} \lambda_8 \ell_2 - {6 \over 5} \lambda_6  \ell_4 + 2 \lambda_4 \ell_6.
\end{align*}
\end{lem}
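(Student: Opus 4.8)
The plan is to treat each $\ell_k$, $k\in\{0,2,4,6\}$, as a first-order differential operator acting only on the $\lambda$-variables, namely $\ell_k=\sum_j T_{kj}\,\partial_{\lambda_j}$, where $T_{kj}$ denotes the entry of $T$ in the row corresponding to $\ell_k$ and the column corresponding to $\partial_{\lambda_j}$, with $j$ ranging over $4,6,8,10$. The first two families of relations are then immediate: $\ell_q$ involves neither $u_1$, $u_3$, nor their derivatives, while $\partial_{u_1}$ and $\partial_{u_3}$ differentiate only in the $u$-variables and have constant coefficients, so the two operators act on disjoint sets of variables and hence commute. This gives $[\partial_{u_1},\ell_k]=[\partial_{u_3},\ell_k]=0$ for all $k$.

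Next I would read off from the first row of $T$ that $\ell_0=4\lambda_4\partial_{\lambda_4}+6\lambda_6\partial_{\lambda_6}+8\lambda_8\partial_{\lambda_8}+10\lambda_{10}\partial_{\lambda_{10}}$ is exactly the Euler operator $\sum_j j\,\lambda_j\partial_{\lambda_j}$ for the grading $\deg\lambda_j=j$. Inspecting the degrees of the entries in the row of $T$ indexed by $k$ shows that each $T_{kj}$ is homogeneous of degree $k+j$, so $T_{kj}\partial_{\lambda_j}$ has degree $k$ and therefore $\ell_k$ is homogeneous of degree $k$. For the Euler field $E$ and any homogeneous derivation $D=\sum_j a_j\partial_{\lambda_j}$ of degree $d$, the standard computation gives $[E,D]=\sum_j\bigl(E(a_j)-j\,a_j\bigr)\partial_{\lambda_j}=d\,D$, since the second-order terms cancel and $E(a_j)=(d+j)a_j$. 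Applying this with $E=\ell_0$ and $D=\ell_k$ yields $[\ell_0,\ell_k]=k\,\ell_k$ at once.

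For the three remaining relations I would use that the commutator of two derivations of $\mathbb{C}[\lambda]$ is again a derivation, and that any derivation multiplied by a function is a derivation, so both sides of each claimed identity are derivations and are therefore determined by their action on the generators $\lambda_4,\lambda_6,\lambda_8,\lambda_{10}$. Since $\ell_k\lambda_j=T_{kj}$, each claimed identity $[\ell_a,\ell_b]=\sum_c\gamma_c\,\ell_c$ is equivalent to the four polynomial equalities
\[
\ell_a(T_{bj})-\ell_b(T_{aj})=\sum_c\gamma_c\,T_{cj},\qquad j=4,6,8,10,
\]
obtained by applying both sides to $\lambda_j$. I would verify these by direct substitution of the explicit entries of $T$. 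As a cheap partial consistency check I would also apply both sides to the discriminant, using $\ell_0\Delta=40\Delta$, $\ell_2\Delta=0$, $\ell_4\Delta=12\lambda_4\Delta$, $\ell_6\Delta=4\lambda_6\Delta$; for instance both expressions for $[\ell_2,\ell_4]\Delta$ collapse to $72\lambda_6\Delta$.

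I expect the only genuine labor to lie in this third step: computing the four coefficients of each of $[\ell_2,\ell_4]$, $[\ell_2,\ell_6]$, and $[\ell_4,\ell_6]$ requires differentiating the quadratic-in-$\lambda$ entries of $T$ and collecting terms so that the fractional coefficients $\tfrac{8}{5},\tfrac{12}{5},\tfrac{6}{5},\tfrac{4}{5}$ reproduce the stated right-hand sides exactly. This bookkeeping is the main obstacle; the reduction to checking the identities on the four coordinate functions, and the structural argument for $\ell_0$, make everything else routine.
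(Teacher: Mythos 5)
Your proposal is correct and follows essentially the same route as the paper, whose entire proof is the one-line remark that the relations follow directly from the definition $(\ell_0\;\ell_2\;\ell_4\;\ell_6)^\top=T\,\nabla_\lambda$; you simply make explicit the standard reductions (disjoint variables for the $u$-commutators, the Euler-field degree argument for $[\ell_0,\ell_k]$, and evaluation of derivations on the coordinate functions $\lambda_j$ for the rest). The residual polynomial bookkeeping you defer is exactly what the paper also leaves to the reader, and your consistency check via $\ell_k\Delta$ is a sound, if optional, addition.
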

\begin{proof}
This relations follow directly from \eqref{F-24}.
\end{proof}

\begin{lem} \label{lem2} The operators $L_i$, $i = 0, 1, 2, 3, 4, 6$, act on $- \zeta_1$ and $- \zeta_3$ 
according to the formulas
\begin{align*}
L_0(- \zeta_1) &= - \zeta_1 , & L_0(- \zeta_3) &= - 3 \zeta_3,\\
L_1(- \zeta_1) &= \wp_{2,0}, & L_1(- \zeta_3) &= \wp_{1,3},\\
L_2(- \zeta_1) &= {1 \over 2} \wp_{3,0} - \zeta_3 + {3 \over 5} \lambda_4 u_1, &
L_2(- \zeta_3) &= {1 \over 2} \wp_{2,3} + {4 \over 5} \lambda_4 \zeta_1 + \left({4 \over 5} \lambda_4^2 - 3 \lambda_8\right) u_3,\\
L_3(- \zeta_1) &= \wp_{1,3}, & L_3(- \zeta_3) &= \wp_{0,6},\\
L_4(- \zeta_1) &= \wp_{2,3} + {2 \over 5} \lambda_6 u_1 - \lambda_8 u_3, &
L_4(- \zeta_3) &= \wp_{1,6} + {6 \over 5} \lambda_6 \zeta_1 - \lambda_4 \zeta_3 - \lambda_8 u_1 +
6 \left({1 \over 5} \lambda_4 \lambda_6 - \lambda_{10}\right) u_3,\\
L_6(- \zeta_1) &= {1 \over 2} \wp_{1,6} + {1 \over 5} \lambda_8 u_1 - 2 \lambda_{10} u_3, &
L_6(- \zeta_3) &= {1 \over 2} \wp_{0,9} + {3 \over 5} \lambda_8 \zeta_1 - 2 \lambda_{10} u_1 + {3 \over 5} \lambda_4 \lambda_8 u_3.
\end{align*}
\end{lem}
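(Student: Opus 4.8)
The plan is to apply each operator $L_k$ directly to $-\zeta_1=-\partial_{u_1}\ln\sigma$ and $-\zeta_3=-\partial_{u_3}\ln\sigma$, reusing the intermediate identities already assembled in the proof of Theorem~\ref{t-25}. Throughout I would rely on the elementary second-derivative relations $\partial_{u_1}\zeta_1=-\wp_{2,0}$, $\partial_{u_3}\zeta_1=\partial_{u_1}\zeta_3=-\wp_{1,3}$ and $\partial_{u_3}\zeta_3=-\wp_{0,6}$, which follow at once from $\wp_{i,3j}=-\partial_{u_1}^i\partial_{u_3}^j\ln\sigma$.

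First, for $k=0,1,3$ the computation is immediate. Since $L_1=\partial_{u_1}$ and $L_3=\partial_{u_3}$, the four values $L_1(-\zeta_1)$, $L_1(-\zeta_3)$, $L_3(-\zeta_1)$, $L_3(-\zeta_3)$ are just the second-derivative relations above. For $L_0=\ell_0-u_1\partial_{u_1}-3u_3\partial_{u_3}$ I would start from formulas \eqref{F-27} and \eqref{F-28} for $\ell_0\zeta_1$ and $\ell_0\zeta_3$; the drift terms $-u_1\partial_{u_1}$ and $-3u_3\partial_{u_3}$ cancel exactly the $u_1\wp$ and $u_3\wp$ contributions there, leaving $L_0(-\zeta_1)=-\zeta_1$ and $L_0(-\zeta_3)=-3\zeta_3$, in accordance with the homogeneity of $\sigma$.

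The substantive cases are $k=2,4,6$. Here the key observation is that $\ell_2\zeta_1,\ell_2\zeta_3$ (and likewise for $\ell_4,\ell_6$) were already produced in the proof of Theorem~\ref{t-25} as the first pair of displayed identities in each of steps 2), 3), 4); negating them yields $\ell_k(-\zeta_1)$ and $\ell_k(-\zeta_3)$ directly, including the polynomial-in-$u$ remainders $\partial_{u_1}w_k$, $\partial_{u_3}w_k$. To these I would add the drift contributions, e.g.\ for $L_2$ the terms $\bigl(-\zeta_1+\tfrac{4}{5}\lambda_4u_3\bigr)\partial_{u_1}(-\zeta_j)-u_1\partial_{u_3}(-\zeta_j)$, evaluated again by the second-derivative relations. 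The mechanism that produces the clean right-hand sides is a cancellation: every term in $\ell_k(-\zeta_j)$ of the form $\zeta_1\wp_{2,0}$, $\zeta_1\wp_{1,3}$, $\zeta_3\wp_{1,3}$, $u_3\wp_{2,0}$, $u_1\wp_{1,3}$, $\ldots$ is matched and killed by a corresponding drift term, since the drift of $L_k$ was built precisely for this purpose. What survives is exactly one half of a single third-order $\wp_{i,3j}$, the linear-in-$\zeta$ terms carrying constant coefficients (such as $\tfrac{4}{5}\lambda_4\zeta_1$ in $L_2(-\zeta_3)$), and the polynomial-in-$u$ remainders.

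I expect the only obstacle to be bookkeeping: each of the twelve formulas involves half a dozen terms with signs and fractional coefficients, and the remainders $\partial_{u_1}w_k$, $\partial_{u_3}w_k$ must be tracked carefully (for instance $\partial_{u_1}w_2=-\tfrac{3}{5}\lambda_4u_1$ and $\partial_{u_3}w_2=(3\lambda_8-\tfrac{4}{5}\lambda_4^2)u_3$, each entering with a sign change). No idea beyond Theorem~\ref{t-25} is required; the entire content is that the drift terms of $L_k$ are chosen so as to annihilate the $\zeta\cdot\wp$ and $u\cdot\wp$ products, whence the verification reduces to matching coefficients term by term against the stated right-hand sides.
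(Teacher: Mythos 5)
Your proposal is correct and follows exactly the route of the paper's own (very terse) proof: for $L_1,L_3$ the formulas are immediate from the definitions \eqref{star}, \eqref{zeta}, \eqref{wp}, and for $L_0,L_2,L_4,L_6$ one negates the identities for $\ell_k\zeta_1,\ell_k\zeta_3$ already displayed in the proof of Theorem~\ref{t-25} and adds the drift terms of \eqref{star}, which cancel the $\zeta\cdot\wp$ and $u\cdot\wp$ products. Your sample bookkeeping (e.g.\ $\partial_{u_1}w_2=-\tfrac{3}{5}\lambda_4u_1$, $\partial_{u_3}w_2=(3\lambda_8-\tfrac{4}{5}\lambda_4^2)u_3$ entering with reversed sign) checks out against the stated right-hand sides.
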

\begin{proof}
For the operators $L_1, L_3$ this result follows form \eqref{star}, \eqref{zeta} and \eqref{wp}.
For the operators $L_0, L_2, L_4, L_6$ this result follows form \eqref{star} and the proof of Theorem \ref{t-25}.
\end{proof}

The following result is based on the described in Theorem \ref{t-25} action of $\mathcal{L}_i,\, i=0,1,2,3,4,6$
on the differential generators $x_2$ and $z_4$ of the field $\mathcal{F}_2$.

\begin{thm} \label{t-26}
The commutation relations in the Lie $\mathcal{F}_2$-algebra $\Der{\mathcal{F}_2}$ of derivations 
of the field $\mathcal{F}_2$ have the form
\begin{align*}
[L_0, L_k] &= kL_k, \quad k=1,2,3,4,6; &
[L_1, L_2] &= \wp_{2,0} L_1 - L_3;\\
[L_1, L_3] &= 0; &
[L_1, L_4] &= \wp_{1,3} L_1 + \wp_{2,0} L_3; \\
[L_1, L_6] &= \wp_{1,3} L_3; &
[L_2, L_3] &= - \left(\wp_{1,3} + {4 \over 5} \lambda_4 \right) L_1; \\
[L_3, L_4] &= \left(\wp_{0,6} + {6 \over 5} \lambda_6 \right) L_1 + \left(\wp_{1,3} - \lambda_4\right) L_3; &
[L_3, L_6] &= {3 \over 5} \lambda_8 L_1 + \wp_{0,6} L_3; \\
[L_2, L_4] &=\frac{8}{5}\lambda_6 L_0 -\frac{1}{2}\wp_{2,3}L_1 -\frac{8}{5}\lambda_4 L_2 +\frac{1}{2}\wp_{3,0}L_3 + 2L_6; \hspace{-20mm} & \\
[L_2, L_6] &= \frac{4}{5}\lambda_8 L_0 -\frac{1}{2}\wp_{1,6}L_1 +\frac{1}{2}\wp_{2,3}L_3 -\frac{4}{5}\lambda_4 L_4; & \\
[L_4, L_6] &=-2\lambda_{10} L_0 - \frac{1}{2}\wp_{0,9}L_1 +\frac{6}{5}\lambda_8 L_2 +\frac{1}{2}\wp_{1,6}L_3 -\frac{6}{5}\lambda_6 L_4 + 2\lambda_4 L_6.
\hspace{-100mm}&
\end{align*}
\end{thm}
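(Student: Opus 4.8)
The plan is to reduce every bracket to a single master formula and then feed it the two preparatory lemmas. First I would write each generator in the uniform shape $L_i=\ell_i+A_i$, where $A_i=a_i\,\partial_{u_1}+b_i\,\partial_{u_3}$ is the fibre-tangent part read off from Theorem \ref{t-25}: thus $a_1=1$, $b_1=0$; $a_3=0$, $b_3=1$; $a_0=-u_1$, $b_0=-3u_3$; $a_2=-\zeta_1+\tfrac{4}{5}\lambda_4u_3$, $b_2=-u_1$; $a_4=-\zeta_3+\tfrac{6}{5}\lambda_6u_3$, $b_4=-\zeta_1-\lambda_4u_3$; $a_6=\tfrac{3}{5}\lambda_8u_3$, $b_6=-\zeta_3$ (with $\ell_1=\ell_3=0$). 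Since $\ell_q$ differentiates only the parameters $\lambda$ while $\partial_{u_1},\partial_{u_3}$ differentiate only $u$, and $[\partial_{u_k},\ell_q]=0$, a direct expansion of $[\ell_i+A_i,\ell_j+A_j]$ gives
\[
[L_i,L_j]=[\ell_i,\ell_j]+\bigl(L_i(a_j)-L_j(a_i)\bigr)\partial_{u_1}+\bigl(L_i(b_j)-L_j(b_i)\bigr)\partial_{u_3},
\]
where the simplification $\ell_i(c)+A_i(c)=L_i(c)$ fuses the $[\ell_i,A_j]$ and $[A_i,A_j]$ contributions into a single $L_i$-derivative of each coefficient.

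Second I would evaluate the right-hand side. The term $[\ell_i,\ell_j]$ is supplied verbatim by Lemma \ref{lem1}; I then substitute $\ell_k=L_k-A_k$ to push these back into the $L$-basis, which throws the first-order pieces $-A_k$ into the $\partial_{u_1},\partial_{u_3}$ coefficients. The derivatives $L_i(a_j),L_i(b_j)$ are assembled from three ingredients: the action on $u_1,u_3$ is $A_i(u_1)=a_i$ and $A_i(u_3)=b_i$; the action on the parameters is $L_i(\lambda_k)=\ell_i(\lambda_k)$, read off from the matrix $T$ in \eqref{F-24}; and the action on $\zeta_1,\zeta_3$ is exactly the content of Lemma \ref{lem2}. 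Collecting the $L_0,\dots,L_6$ pieces together with the residual $\partial_{u_1}=L_1$, $\partial_{u_3}=L_3$ pieces then yields the claimed coefficients.

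For the brackets involving $L_1$ or $L_3$ this is immediate. For instance, with $a_3=0$, $b_3=1$, $\ell_3=0$ the master formula reduces $[L_2,L_3]$ to $-L_3(a_2)\,\partial_{u_1}$, and since $L_3(-\zeta_1)=\wp_{1,3}$ by Lemma \ref{lem2} and $L_3(u_3)=1$, one reads off $[L_2,L_3]=-\bigl(\wp_{1,3}+\tfrac{4}{5}\lambda_4\bigr)L_1$, as stated; the relations $[L_1,L_2]$, $[L_1,L_4]$, $[L_1,L_6]$, $[L_3,L_4]$, $[L_3,L_6]$ come out the same way, and $[L_0,L_k]=kL_k$ follows at once from the $\ell_0$-homogeneity of the coefficients $a_k,b_k$.

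The main obstacle is the three heavy brackets $[L_2,L_4]$, $[L_2,L_6]$, $[L_4,L_6]$, where every $\ell_k$ and the full $\zeta$-dependence of the coefficients enter. There the multiples of $L_0,L_2,L_4,L_6$ are dictated directly by Lemma \ref{lem1}, so the genuine work is the $L_1$- and $L_3$-coefficients coming from $L_i(a_j)-L_j(a_i)$ and $L_i(b_j)-L_j(b_i)$. Each of these initially carries terms linear in $\zeta_1,\zeta_3$ and linear in $u_1,u_3$ in addition to the Weierstrass functions. Because $[L_i,L_j]$ must again lie in $\Der(\mathcal{F}_2)$, all $\zeta$- and $u$-linear contributions are forced to cancel; verifying this cancellation explicitly — using the $\tfrac{4}{5}\lambda_4\zeta_1$ and $\tfrac{6}{5}\lambda_6\zeta_1$ tails and the $u$-linear remainders recorded in Lemma \ref{lem2} — is the one genuinely calculational step, and it is precisely what leaves the half-integer Weierstrass coefficients $-\tfrac{1}{2}\wp_{2,3}L_1+\tfrac{1}{2}\wp_{3,0}L_3$, $-\tfrac{1}{2}\wp_{1,6}L_1+\tfrac{1}{2}\wp_{2,3}L_3$, and $-\tfrac{1}{2}\wp_{0,9}L_1+\tfrac{1}{2}\wp_{1,6}L_3$ appearing in the statement.
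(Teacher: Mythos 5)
Your proposal is correct and follows essentially the same route as the paper: decompose each $L_i$ into its $\ell_i$-part and its fibre-tangent part, use Lemma \ref{lem1} to fix the coefficients of $L_0,L_2,L_4,L_6$ (your substitution $\ell_k=L_k-A_k$ is exactly the paper's step of applying both sides to $\lambda_k$), and use Lemma \ref{lem2} together with the action on $u_1,u_3$ (your $L_i(a_j)-L_j(a_i)$, etc.) to extract the $L_1$- and $L_3$-coefficients, with the $\zeta$- and $u$-linear terms cancelling as in the paper's sample computation of $a_{2,4,-1}$. The "master formula" is just a compact repackaging of the paper's term-by-term bookkeeping, so there is nothing to add.
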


\begin{proof}
Due to linearity, the relation
\[
 [L_0, L_k] = k L_k, \quad k=1,2,3,4,6,
\]
can be checked independently for every summand in the expression \eqref{star} for $L_k$.
It is easy to see that it is correct due to the graduing.

The expressions for $[L_m, L_n]$, where $m$ or $n$ is equal to $1$ or $3$,
can be obtained from \eqref{star} by some simple calculations using Lemmas \ref{lem1}, \ref{lem2}.

It remains to prove the commutation relations between $L_2, L_4$, and $L_6$.
We express $[L_m, L_n]$, where $m < n$; $m, n = 2, 4, 6$, in the form
\[
[L_m,L_n]= a_{m,n,0} L_0 + a_{m,n,-1} L_1 + a_{m,n,-2} L_2 + a_{m,n,-3} L_3 + a_{m,n,-4} L_4 + a_{m,n,-6} L_6.
\]
We have $\deg a_{i,j,-k} = i+j-k$.
By applying both sides of this equation to $\lambda_k$ and using the explicit expressions for $L_k$, we get
\[
[\ell_m, \ell_n] \lambda_k = (a_{m,n,0} \ell_0 + a_{m,n,-2} \ell_2 + a_{m,n,-4} \ell_4 + a_{m,n,-6} \ell_6) \lambda_k.
\]
From this formula and Lemma \ref{lem1} we obtain the values of the coefficients $a_{m,n,-k}$, $k = 0, 2, 4, 6$:
\begin{align}
[L_2, L_4] &=\frac{8}{5}\lambda_6 L_0 + a_{2,4,-1} L_1 -\frac{8}{5}\lambda_4 L_2 + a_{2,4,-3} L_3 + 2L_6; & \label{bbb24} \\
[L_2, L_6] &= \frac{4}{5}\lambda_8 L_0 + a_{2,6,-1} L_1 + a_{2,6,-3} L_3 -\frac{4}{5}\lambda_4 L_4; & \label{bbb26} \\
[L_4, L_6] &=-2\lambda_{10} L_0 + a_{4,6,-1} L_1 +\frac{6}{5}\lambda_8 L_2 + a_{4,6,-3} L_3 -\frac{6}{5}\lambda_6 L_4 + 2\lambda_4 L_6. \label{bbb46}
\end{align}

In subsequent calculations we compare the actions of the left and right hand sides of the expressions
\eqref{bbb24}--\eqref{bbb46} on the coordinates $u_1$ and $u_3$.
Herewith we use the expressions \eqref{F-24}, \eqref{star} and Lemma~\ref{lem2}.

We provide the calculation of the coefficient $a_{2,4,-1}$. The left hand side of \eqref{bbb24} gives
\begin{multline*}
[L_2, L_4] u_1 = L_2(-\zeta_3 + \frac{6}{5}\, \lambda_6 u_3) - L_4(-\zeta_1 + \frac{4}{5}\lambda_4u_3) = \\
= L_2(-\zeta_3) + \frac{6}{5} \ell_2(\lambda_6) u_3 - \frac{6}{5} \lambda_6 u_1
- L_4(-\zeta_1) - \frac{4}{5} \ell_4(\lambda_4) u_3 - \frac{4}{5} \lambda_4 (-\zeta_1 - \lambda_4u_3) = \\
= - {1 \over 2} \wp_{2,3} + {8 \over 5} \lambda_4 \zeta_1 - \frac{8}{5} \lambda_6 u_1
+ {2 \over 5} \left(3 \lambda_8 - {16 \over 5} \lambda_4^2\right) u_3.
\end{multline*}
The right hand side of \eqref{bbb24} gives
\[
[L_2, L_4] u_1 = a_{2,4,-1} + \frac{8}{5}\lambda_4 \zeta_1 - \frac{8}{5}\lambda_6 u_1
+ {2 \over 5} \left( 3 \lambda_8 -\frac{16}{5}\lambda_4^2\right) u_3.
\]
By equating we obtain $a_{2,4,-1} = - {1 \over 2} \wp_{2,3}$.

The coefficients $a_{2,4,-3}$, $a_{2,6,-1}$, $a_{2,6,-3}$, $a_{4,6,-1}$, $a_{4,6,-3}$
are calculated analogously.

\end{proof}

\end{document}